\let\cl@chapter\relax
\newcommand{\Hess}{\nabla^2}  
\renewcommand{\Re}{\mathbb{R}}
\newcommand{\abs}[1]{\left|#1\right|}
\newcommand{\norm}[1]{\left\|#1\right\|}
\renewcommand{\Set}[1]{\left\{#1\right\}}
\newcommand{\prox}{\operatorname{prox}}
\newcommand{\argmin}{\operatorname*{argmin}}
\newcommand{\bigO}{\mathcal O}
\newcommand{\TT}{{\scriptscriptstyle TT}}
\newcommand{\MC}{{\scriptscriptstyle MC}}
\newcommand{\Z}{\mathcal Z}
\newcommand{\K}{\mathcal K}
\newcommand{\Zd}{{\scriptscriptstyle \mathcal Z}}
\newcommand{\pr}[1]{\mathbb P\left( #1 \right)}
\newcommand{\expect}[1]{\mathbb E\left[#1\right]}
\newcommand{\Var}[1]{\textrm{Var} \left[#1\right]}
\newcommand{\Cov}[1]{\textrm{Cov} \left[#1\right]}
\newcommand{\xstar}{x^\ast}
\newcommand{\Grad}{\nabla\!}
\spnewtheorem{assumption}{Assumption}{\bfseries}{\itshape}
\begin{document}

\title{Inexact Proximal Point Algorithms for Zeroth-Order Global Optimization}
\author{%
  \mbox{Minxin Zhang}
  \and \mbox{Fuqun Han}
  \and \mbox{Yat Tin Chow}
  \and \mbox{Stanley Osher}
  \and \mbox{Hayden Schaeffer}
}

\institute{
Minxin Zhang \and Fuqun Han  \and Stanley Osher \and Hayden Schaeffer \at
Department of Mathematics, University of California, Los Angeles, Los Angeles, CA 90024, USA \\
\email{\mbox{minxinzhang@math.ucla.edu}, \mbox{fqhan@math.ucla.edu}, \mbox{sjo@math.ucla.edu}, \mbox{hayden@math.ucla.edu}}
\and
Yat Tin Chow \at
Department of Mathematics, University of California, Riverside, Riverside, CA 92521, USA\\
\email{\mbox{yattinc@ucr.edu}}
}

\date{Received: date / Accepted: date}
\maketitle

\begin{abstract}
This work concerns the zeroth-order global minimization of continuous nonconvex functions with a global minimizer and possibly multiple local minimizers. We formulate a theoretical framework for inexact proximal point (IPP) methods for global optimization, establishing convergence guarantees under mild assumptions when either deterministic or stochastic estimates of proximal operators are used.
The quadratic regularization in the proximal operator and the scaling effect of a parameter $\delta>0$ create a concentrated landscape of an associated Gibbs measure that is practically effective for sampling. 
The convergence of the expectation under the Gibbs measure as $\delta\to 0^+$ is established, and the convergence rate of $\bigO(\delta)$ is derived under additional assumptions.
These results provide a theoretical foundation for evaluating proximal operators inexactly using sampling-based methods.
In particular, we propose a new approach based on tensor train (TT) approximation. 
This approach employs a randomized TT cross algorithm 
to efficiently construct a low-rank TT approximation of a discretized function using a small number of function evaluations, and we provide an error analysis for the TT-based estimation. We then propose two practical IPP algorithms, TT-IPP and MC-IPP. The TT-IPP algorithm leverages TT estimates of the proximal operators, while the MC-IPP algorithm employs Monte Carlo (MC) integration to estimate the proximal operators.
Both algorithms are designed to adaptively balance efficiency and accuracy in inexact evaluations of proximal operators. The effectiveness of the two algorithms is demonstrated through experiments on diverse benchmark functions and various applications. 
\keywords{
global optimization, nonconvex optimization, zeroth-order optimization, derivative-free optimization, 
proximal operator, inexact proximal point algorithm, tensor train, cross approximation, Gibbs measure.}
\subclass{49M37 \and 65K05 \and 90C26 \and 90C56}
\end{abstract}


\section{Introduction}
Global optimization of nonconvex functions plays a crucial role in various scientific and engineering applications, including machine learning \cite{lecun2015deep},
signal processing \cite{saab2010sparse}, computational biology \cite{reali2017optimization} and computational physics \cite{gottvald1992global}. These problems are inherently challenging due to the presence of multiple local
minimizers and the lack of gradient information in certain scenarios. 
Standard gradient-based optimization methods, such as gradient descent, often only guarantee the convergence to a local minimizer. 
To address this, various global optimization techniques have been proposed, most of which are heuristic or require computational complexity that increases exponentially with problem dimensionality \cite{locatelli2013global}.
Zeroth-order optimization methods \cite{larson2019derivative}, also known as derivative-free optimization, solve problems solely through function evaluations, 
making them ideal for scenarios where gradient information is unavailable or expensive to compute.
In this work, we propose new \emph{inexact proximal point algorithms} for zeroth-order global minimization 
of continuous nonconvex functions $f:\Re^d\to\Re$ with a global minimizer and possibly multiple local minimizers. Theoretical convergence guarantees are established under mild assumptions.
 
Proximal point methods \cite{parikh2014proximal} is a class of optimization methods that iterate by evaluating the set-valued \emph{proximal operator},
defined as
\begin{equation}\label{eq:prox}
\prox_{tf}(x) := \argmin_{z\in\Re^d} \phi(z)\,, ~\textrm{ with } \, \phi(z) = f(z) + \frac{1}{2t} \norm{z-x}^2,
\end{equation}
for some $t>0.$ These methods are generally applied to functions for which proximal operators are either easy to compute or admit closed-form solutions.
Convergence properties of proximal point methods have been studied extensively in the context of convex optimization \cite{moreau1965proximite, rockafellar1976monotone, solodov2001unified, 
bertsekas2011incremental, asi2019stochastic}. 
For nonconvex functions, variants of proximal point methods have been considered \cite{rockafellar2021advances, davis2022proximal, fukushima1981generalized, 
khanh2023inexact,kong2019complexity,liang2018doubly}, 
typically guaranteeing convergence to a critical point
or a local minimizer.
A proximal point method for global optimization is proposed in \cite{heaton2024global},
with convergence guarantee to the global minima under the condition that the proximal operator is evaluated exactly at each iteration.
For a general nonconvex function $f$, evaluating the exact proximal operator is computationally impractical.
As a generalization of \cite{heaton2024global}, in Section~\ref{sec:IPP} we formulate a theoretical framework for inexact proximal point (IPP) methods 
that guarantees convergence to the global minimum when either deterministic or stochastic 
estimates of proximal operators are used.

We then consider zeroth-order methods for evaluating single-valued proximal operators inexactly.
The proximal operator \eqref{eq:prox} is single-valued under a
wide range of conditions (see Proposition~\ref{prop:single_prox}), encompassing a broad class of nonconvex and nonsmooth functions.
For a small $\delta>0$, it is well known that the \emph{Gibbs measure} associated with $\phi$ in \eqref{eq:prox}, defined by
\begin{equation}\label{eq:gibbs}
\rho_\delta(A):=\frac{\int_{A}\exp{(-\phi(z)/\delta)dz}}{\int_{\Re^d}\exp{(-\phi(z)/\delta)dz}} \quad\textrm{ for } A\in\mathcal B(\Re^d)\,,
\end{equation}
approximates the Dirac measure centered at $z^*:=\prox_{tf}(x)$.
The convergence in distribution of Gibbs measures and the corresponding convergence rates were derived in \cite{Gibbs_asym, bras2022convergence}. 
Let $Z_\delta$ be a random variable with the probability distribution $\rho_\delta$. Then the expectation of $Z_\delta$ satisfies
\begin{equation}\label{eq:expect}
\expect{Z_\delta}= \frac{\int z\exp\left(-\phi(z)/\delta\right)dz}{\int\exp{(-\phi(z)/\delta)dz}}\approx z^*.
\end{equation}
In Section~\ref{sec:analysis}, we show that the expectation converges to $z^*$ as $\delta\to 0^+$ if $\phi$ is continuous at around $z^*$ and
derive the convergence rate of $\bigO(\delta)$ for the case where $z^*$ is \emph{nondegenerate} and 
$\phi$ is twice continuously differentiable at around $z^*$.

To obtain an estimate of $z^*$, it is impractical to directly compute \eqref{eq:expect} via numerical integration due to the exponential growth in the number of quadrature nodes with respect to the dimension $d$. Fortunately, the quadratic regularization in $\phi$ and the scaling effect of $\delta$ lead to a concentrated landscape of the Gibbs measure that is practically effective for sampling.
As an illustration, Figure~\ref{fig:regula} compares the original landscape of a nonconvex function, the Schaffer function \cite{test_problems_2005},
with the landscapes of its several transformations. The minimizer of the original function $f$ in Figure~\ref{fig:1a} is turned into a maximizer in Figure~\ref{fig:1b}; the quadratic regularization reduces oscillations and increases density near the solution in Figure~\ref{fig:1c}; and
the scaling by a small $\delta>0$ concentrates the density near the solution.
We consider two sampling-based methods to efficiently estimate the proximal operator. A classical approach, as proposed in 
\cite{osher2023hamilton,heaton2024global,tibshirani2024laplace}, 
is to use the Monte Carlo (MC) integration to approximate \eqref{eq:expect} by computing a weighted average of Gaussian samples 
centered at $x$. The MC integration is easy to implement, but 
may suffer from high variance in practice. To alleviate this, variance reduction techniques, such as the exponentially 
weighted moving average (EWMA) \cite{ross2009probability}, can be incorporated.
Additionally, in Section~\ref{sec:approx}, we propose a new approach based on tensor train (TT) approximation \cite{oseledets2011tensor,Thm_TT_accuracy},
which exploits the Sobolev smoothness of the integrands to improve the estimation accuracy. 
The TT approximation of tensors is a generalization of the truncated singular value decomposition (SVD) of matrices.
To obtain a TT estimate of \eqref{eq:expect}, we employ the randomized TT cross algorithm 
\cite{oseledets2010tt, savostyanov2014quasioptimality} to
construct a low-rank TT approximation of the discretized function $\exp{\left(-f/\delta\right)}$ over a mesh grid. 
The TT cross algorithm accurately computes the TT approximation using a small number of function evaluations, with computational cost depending linearly on the dimension $d$, and without storing the full tensor.
This makes it particularly well-suited for functions with approximate low-rank structure, such as the one illustrated in Figure~\ref{fig:1d}.
We provide an error analysis for the TT estimate of a proximal operator in Section~\ref{sec:error}.

\begin{figure}[H]
  \centering
  \subfloat[]{%
    \includegraphics[width=0.24\textwidth]{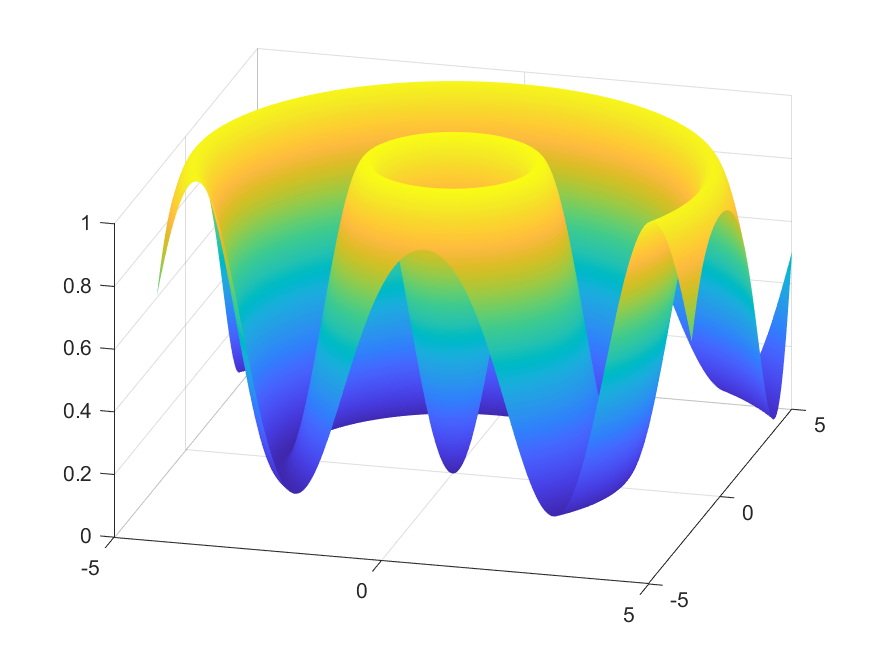}%
    \label{fig:1a}%
  }\hfill
  \subfloat[]{%
    \includegraphics[width=0.24\textwidth]{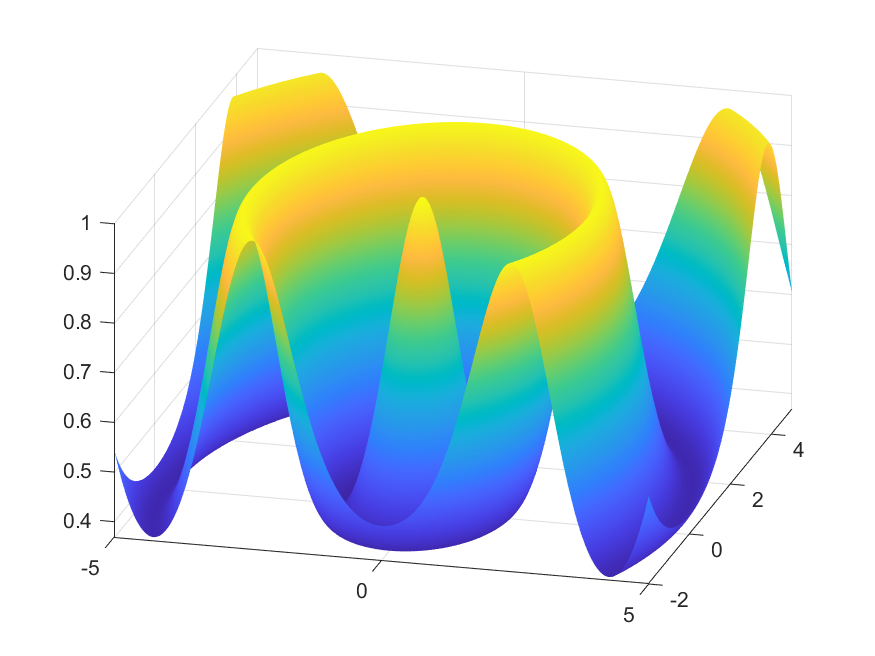}%
    \label{fig:1b}%
  }\hfill
  \subfloat[]{%
    \includegraphics[width=0.24\textwidth]{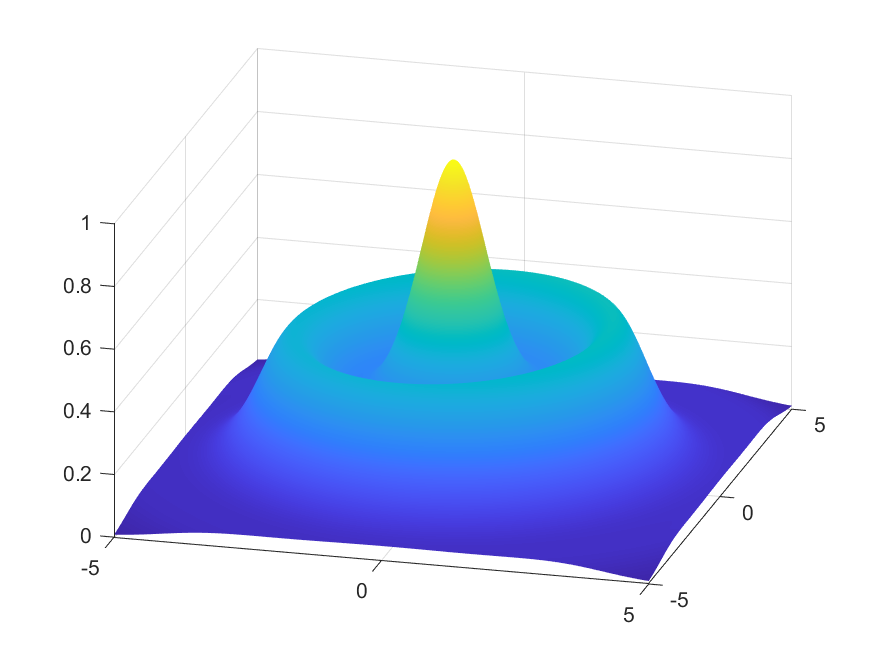}%
    \label{fig:1c}%
  }\hfill
  \subfloat[]{%
    \includegraphics[width=0.24\textwidth]{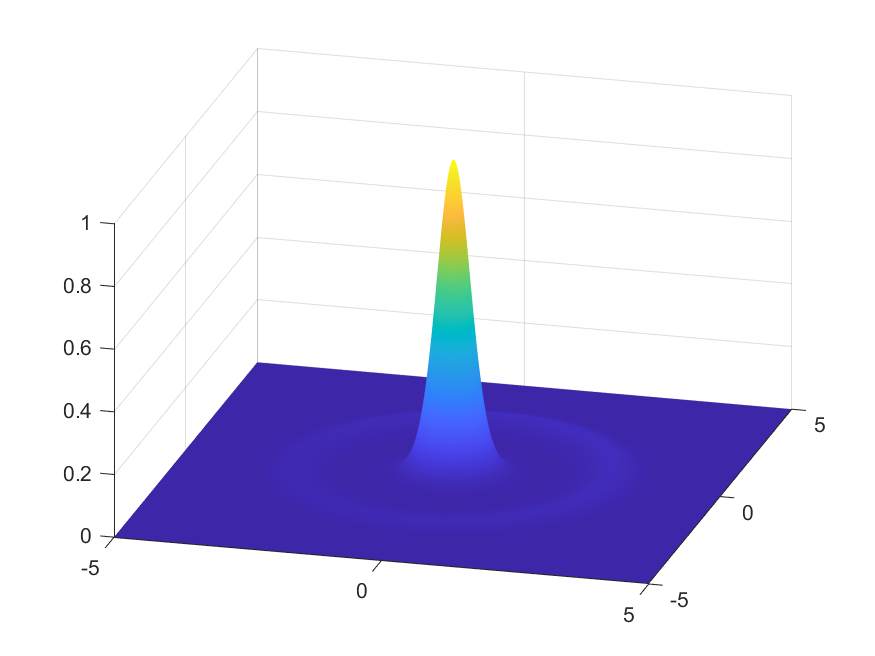}%
    \label{fig:1d}%
  }

  \caption{%
    (a) The 2D Schaffer function $f(x)$; 
    (b) $\exp(-f(x))$; 
    (c) $\exp\!\bigl(-\bigl(f(x)+\|x-z\|_2^2/(2t)\bigr)\bigr)$ with $t=6$, $z=(1,1)$; 
    (d) $\exp\!\bigl(-\bigl(f(x)+\|x-z\|_2^2/(2t)\bigr)/\delta\bigr)$ with $\delta=0.25$.%
  }
  \label{fig:regula}
\end{figure}

We then propose two practical IPP algorithms: the TT-IPP algorithm, which leverages TT estimates of the proximal operators; 
and the MC-IPP algorithm, which employs MC integration to estimate the proximal operators. Both algorithms adaptively decrease the parameter $\delta$
based on whether a sufficient decrease in the function value is achieved relative to several previous iterates.
To balance efficiency and accuracy in estimating the proximal operators, TT-IPP is designed to adaptively refine the mesh grid and update the
associated TT approximation; and MC-IPP is designed to adaptively increase the sample size and update the EWMA parameter.
Additionally, a warm start can be conveniently incorporated into the two IPP algorithms, with minimal computational cost equivalent to a single iteration of either TT-IPP or MC-IPP.
The effectiveness of both algorithms is shown through experiments on diverse benchmark functions and applications.

\subsection{Prior work}\label{sec:prior}
In this work, we focus on zeroth-order global optimization for nonconvex functions. Numerous optimization methods in the literature fall into this category. 
A comprehensive survey of historical perspectives and recent advancements in global optimization is presented in \cite{locatelli2021global}.
For example, pure random search (PRS) \cite{locatelli2013global} samples random points uniformly over the feasible region and selects the one with the smallest function value as the solution. 
Genetic Algorithms (GAs) \cite{holland1992genetic} start with a population of candidate solutions and evolve them by mimicking natural evolutionary processes.
Differential evolution (DE) \cite{DE_original} iteratively refines candidate solutions by combining differences between randomly selected agents 
to explore the search space.
Particle swarm optimization (PSO) \cite{PSO_first} is inspired by the social behavior of swarms, 
where particles explore the search space by updating their positions based on personal and collective best solutions.
Simulated Annealing (SA) \cite{sa_original} is a probabilistic algorithm inspired by the annealing process in metallurgy, 
which explores the search space by accepting both improving and, with decreasing probability, worsening solutions to escape local optima.
Most of the aforementioned methods are metaheuristic.
Additionally, variants of random zeroth-order methods for convex optimization were proposed in \cite{nesterov2017random}, 
where an expectation similar to \eqref{eq:expect} was used to approximate gradients. 
More zeroth-order methods for convex optimization are described in \cite{nemirovskij1983problem}.
\cite{jongeneel2024small} proposed a novel randomized gradient estimator for zeroth-order optimization of real analytic functions.
Random zeroth-order methods for constrained minimization of subdifferentially polynomially bounded functions 
were proposed in \cite{lei2024subdifferentially}.
Consensus-based optimization (CBO) \cite{CBO_first_2017, CBO_Analysis} is an emerging class of zeroth-order methods for global optimization that offers convergence 
guarantees to near-optimal solutions. In CBO, a swarm of agents collectively moves toward a consensus point while using stochastic 
perturbations to explore the search space.
In \cite{gomes2023derivativefree}, a derivative-free global optimization algorithm was introduced for one-dimensional functions, 
providing certain convergence guarantees. The algorithm approximates gradient flow using MC integration and rejection sampling.
A proximal point algorithm called HJ-MAD was proposed in \cite{heaton2024global} for global optimization of nonconvex functions. The method iteratively 
computes the proximal operator via MC integration; however, convergence is guaranteed only if the proximal operator is evaluated exactly at each iteration.
In \cite{engquist2024adaptive}, a stochastic derivative-free algorithm was proposed, whose continuous limit, modeled by a stochastic differential equation, converges to the global minimizer as time approaches infinity.

In recent years, tensor-train-based methods have gained attention for multidimensional optimization. 
TT-Opt \cite{sozykin2022ttopt} employs TT approximations to perform optimization on a predefined grid, which limits its ability to achieve high accuracy or explore large domains. PROTES \cite{batsheva2024protes} is another method for optimization on a predefined grid, utilizing probabilistic sampling from a low-parametric distribution represented in a TT format. In 
\cite{chertkov2022optimization}, a probabilistic algorithm was proposed for optimizing discretized functions in the TT format. 
TTGO \cite{shetty2023tensor} leverages the separable structure of TT approximations to perform conditional sampling for initializing local optimization solvers in robotics applications. In \cite{soley2021iterative}, an iterative power algorithm was proposed for global optimization, which performs power iterations on a special form of TT approximations, the quantics tensor train, to concentrate the density function near the global minimizer.

\subsection{Contributions and organization}
The contributions of this work are summarized below.
\begin{enumerate}
    \item  A theoretical framework for IPP methods is formulated for the global optimization of nonconvex functions, with convergence guarantees established under mild assumptions when either deterministic or stochastic estimates of proximal operators are used. Unlike previous analyses in the literature on IPP methods for 
    nonconvex functions, which focus on convergence guarantees to a stationary point, we establish convergence to a global minimizer.
    \item  Convergence of the expectation \eqref{eq:expect} under Gibbs measure as $\delta\to 0^+$ is established, and the convergence
    rate of $\bigO(\delta)$ is derived under additional assumptions.
    These results provide theoretical foundations for evaluating proximal operators inexactly using sampling-based methods.
    \item A TT-based approach is proposed for the estimation of proximal operators, accompanied by an error analysis.
    This approach leverages the Sobolev smoothness of functions to circumvent the \emph{curse-of-dimensionality}, a challenge faced by most existing global optimization methods.
    \item Building on the theoretical framework, two practical IPP algorithms, TT-IPP and MC-IPP, are developed. 
    These algorithms are designed to adaptively balance efficiency and accuracy in evaluating inexact proximal operators. 
    Their effectiveness is demonstrated through experiments on a diverse set of benchmark functions and various applications.
\end{enumerate}

The rest of the paper is organized as follows. The theoretical framework of IPP methods for global optimization is formulated
and analyzed in Section~\ref{sec:IPP}. Convergence results of the expectation under the parameterized Gibbs measure are established 
in Section~\ref{sec:analysis}. Section~\ref{sec:approx} introduces the new TT-based approach for inexact evaluation of proximal operators, along with an error analysis. 
The two practical algorithms, TT-IPP and MC-IPP, are proposed in Section~\ref{sec:IPP_alg}.
Experiment results on benchmark functions and practical applications are showcased in Section~\ref{sec:experiments}. Finally, Section~\ref{sec:conclud} concludes the paper, discussing limitations of this work and potential directions for future research.

\section{Inexact proximal point methods for global optimization}\label{sec:IPP}
In this section, we provide a theoretical framework of inexact proximal point (IPP) methods for the global optimization of nonconvex functions. 
An IPP method evaluates the proximal operator inexactly
\[
y^k\approx \hat x^k\in\prox_{t_k f}(x^k) 
\]
at each iterate $x^k$ for some $t_k>0,$ and the next iterate is given by
\[
x^{k+1}=\alpha_k y^k+(1-\alpha_k) x^k\,,
\]
where $\alpha_k\in (0,1]$ is called a \emph{damping} parameter.
Details of the IPP method under consideration are summarized in Algorithm~\ref{alg:IPP}. In particular, Lines~\ref{line:qk}--\ref{line:tk} update $t_k$ 
in the same manner as the exact proximal point method described in \cite{heaton2024global} to prevent the iterates
from converging to a local minimizer that is not globally optimal. Specifically, when $x^{k+1}$ is close to the previous iterate
$x^k,$ $t_k$ is increased to encourage global exploration; when $x^{k+1}$ is farther from $x^k$, $t_k$ is decreased to promote
exploration near the current iterate $x^{k+1}$; otherwise, $t_k$ remains unchanged.

\begin{algorithm}[htp!]
\caption{Inexact Proximal Point Method (IPP)}\label{alg:IPP}
\begin{algorithmic}[1] 
    \STATE \textbf{Input:} {$x^0\in\Re^d$, $0<\eta_-<1<\eta_+$, $0<\theta_1\le\theta_2<1$, $\bar\epsilon>0$,
    $0< \tau\le t_0\le T$, $\Set{\alpha_k}\subset [\alpha_{\min},\alpha_{\max}]\subset (0,1]$. }
    \FOR{$k=0,1,2,\cdots$}
        \STATE $y^k\approx \hat x^k\in\prox_{t_k f}(x^k)$
        \STATE $x^{k+1}=\alpha_k y^k+(1-\alpha_k) x^k$
        \STATE $q_{k} = \norm{x^{k+1}-x^k}/t_k$ \label{line:qk}
        \IF{$k\ge 1$ and $q_k\le\theta_1 q_{k-1}+\bar\epsilon$} \label{line:qk1}
            \STATE $t_{k+1} = \min\{\eta_+t_k, T\}$
        \ELSIF{$k\ge 1$ and $q_k>\theta_2 q_{k-1}+\bar\epsilon$}
            \STATE $t_{k+1} = \max\{\eta_-t_k, \tau\}$
        \ELSE
            \STATE $t_{k+1} = t_k$
        \ENDIF \label{line:tk}
    \ENDFOR
    \STATE \textbf{Output:} last iterate $x^{k}$.
\end{algorithmic}
\end{algorithm}


We include the definition of the subdifferential below. 
\begin{definition}\cite[Section 2]{davis2018subgradient}\label{def:subd}
 The \emph{subdifferential} of $f:\Re^d\to\Re$ at $x$, denoted by $\partial f(x),$ is the set of all $v\in\Re^n$ satisfying
 \[
 f(y)\ge f(x)+\langle v,y-x\rangle+o(\norm{y-x}) ~\textrm{ as }~ y\to x\,.
 \]
\end{definition}
To establish the theoretical convergence of IPP methods, we make the following assumptions on $f$.
\begin{assumption}\label{assum1}
The function $f:\Re^d\to\Re$ is continuous and has a global minimizer $x^*\in\Re^d$ such that $f_{\min}:=f(x^*).$
\end{assumption}
\begin{assumption}\label{assum2}
The function $f:\Re^d\to\Re$ is $p$-coercive for some $p>0,$ i.e.
\[
f(x)/\norm{x}^p\to\infty \textrm{ as } \norm{x}\to\infty\,.
\]
\end{assumption}
\begin{assumption}\label{assum3}
There exists $\mu>0$ such that $0\in\partial f(x)$ and $f(x)<f_{\min}+\mu$ imply $f(x)=f_{\min}.$
\end{assumption}
We remark that Assumptions~\ref{assum1} and \ref{assum2} are standard conditions ensuring the continuity of $f$ and the existence of
a global minimizer, and Assumption~\ref{assum3} is a mild condition that excludes cases where 
$f$ exhibits extreme oscillations around the global minimizer.

\begin{lemma}
Under the Assumptions~\ref{assum1}--\ref{assum3}, for arbitrary $t>0$,
$\prox_{tf}(x)$ is nonempty for all $x.$ 
\end{lemma}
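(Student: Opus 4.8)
The plan is to establish existence of a minimizer by the direct method of the calculus of variations (Weierstrass' theorem): a continuous, coercive function on $\Re^d$ attains its infimum. Fix $x\in\Re^d$ and $t>0$, and recall from \eqref{eq:prox} that $\phi(z)=f(z)+\frac{1}{2t}\norm{z-x}^2$. Since $\prox_{tf}(x)=\argmin_{z}\phi(z)$ by definition, it suffices to show that $\phi$ attains its minimum over $\Re^d$.

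First I would note that $\phi$ is continuous on $\Re^d$, being the sum of the continuous function $f$ (Assumption~\ref{assum1}) and the continuous quadratic penalty $\frac{1}{2t}\norm{\cdot-x}^2$. Next I would establish coercivity. The key observation is that Assumption~\ref{assum1} already furnishes the uniform lower bound $f(z)\ge f_{\min}$ for all $z$, so that
\[
\phi(z)\;\ge\; f_{\min}+\frac{1}{2t}\norm{z-x}^2\;\longrightarrow\;+\infty \quad\text{as}\quad \norm{z}\to\infty.
\]
Hence $\phi$ is coercive. (Assumption~\ref{assum2} provides the stronger growth $f(z)/\norm{z}^p\to\infty$, which also yields coercivity, but for nonemptiness the quadratic term alone is enough.) Finally I would run the standard minimizing-sequence argument: take $(z_n)$ with $\phi(z_n)\to\inf_{\Re^d}\phi$; coercivity forces $(z_n)$ to be bounded, so by the Bolzano--Weierstrass theorem a subsequence converges to some $\bar z\in\Re^d$, and continuity of $\phi$ gives $\phi(\bar z)=\inf_{\Re^d}\phi$. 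Thus $\bar z\in\prox_{tf}(x)$ and the set is nonempty.

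This argument carries no serious obstacle; its only substantive point is recognizing that the quadratic regularizer renders $\phi$ coercive irrespective of the behavior of $f$ at infinity, so that the existence of a lower bound on $f$ (a consequence of Assumption~\ref{assum1}) is the only input genuinely required. In particular, Assumption~\ref{assum3} plays no role in nonemptiness and is instead reserved for the single-valuedness and global-convergence analyses that follow.
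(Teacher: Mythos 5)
Your proof is correct, and it supplies in full the standard Weierstrass/direct-method argument that the paper itself only gestures at by citing \cite[Lemma A1]{heaton2024global}. Your key observation --- that the quadratic regularizer makes $\phi$ coercive once $f$ is bounded below by $f_{\min}$ (Assumption~\ref{assum1}), so that neither Assumption~\ref{assum2} nor Assumption~\ref{assum3} is actually needed for nonemptiness --- is exactly right and is consistent with the paper's remark that the cited lemma holds here ``under stronger assumptions.''
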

\begin{proof}
A similar result is stated in \cite[Lemma A1]{heaton2024global}, but under stronger assumptions, and the same proof can be applied here.
\end{proof}

The following theorem establishes the convergence of Algorithm~\ref{alg:IPP} to the global minimum
when the proximal operators
are evaluated inexactly with asymptotic accuracy.
The inexactness condition \eqref{eq:prox_error} was also
used in \cite{rockafellar1976monotone} to establish the convergence of a classical IPP method for convex optimization, and in \cite{rockafellar2021advances} to show its local convergence to a stationary point
for nonconvex functions using monotone operator theory.
To the best of our knowledge, this is the first result that guarantees convergence of an IPP method to a global minimizer. 
\begin{theorem}\label{thm:IPP}
Suppose Assumptions~\ref{assum1} -- \ref{assum3} hold, the parameter $\alpha_{\min}>1-\eta_-$, and the choice of $T>0$ is sufficiently large (see \eqref{eq:tbound}) in Algorithm~\ref{alg:IPP}.
Let $\Set{x^k}_{k\ge 0}$ be the sequence of iterates generated by Algorithm~\ref{alg:IPP}. 
If the error in estimating the proximal point operator satisfies
\begin{equation}\label{eq:prox_error}
\sum_{k=0}^{\infty} \norm{y^k - \hat x^k}^2<\infty\,,
\end{equation}
where $\hat x^k\in \prox_{t_k f}(x^k)$, then $\Set{f(x^k)}_{k\ge 0}$ converges to $f_{\min}$ as $k\to\infty.$
\end{theorem}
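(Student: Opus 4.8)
The plan is to track the decrease of the objective along the iterates and to show that the algorithm cannot stall at a non-optimal point. Writing $\hat x^k \in \prox_{t_k f}(x^k)$, the defining optimality of the proximal operator gives the exact descent inequality
\[
f(\hat x^k) + \frac{1}{2t_k}\norm{\hat x^k - x^k}^2 \le f(x^k),
\]
so $f(\hat x^k) \le f(x^k) - \frac{1}{2t_k}\norm{\hat x^k - x^k}^2$. The first step is to convert this into a statement about the actual iterate $x^{k+1} = \alpha_k y^k + (1-\alpha_k)x^k$, where $y^k$ only approximates $\hat x^k$. Using the error bound $\norm{y^k - \hat x^k}$ together with the continuity of $f$ (Assumption~\ref{assum1}), and exploiting that $\alpha_k \in [\alpha_{\min},\alpha_{\max}]$ is bounded away from $0$, I would derive an \emph{approximate} descent inequality of the form $f(x^{k+1}) \le f(x^k) - c\,\norm{\hat x^k - x^k}^2 + e_k$, where the error term $e_k$ is controlled by $\norm{y^k - \hat x^k}^2$ (and possibly $\norm{y^k-\hat x^k}$). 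The condition $\alpha_{\min} > 1 - \eta_-$ should enter here to guarantee that the damped step still inherits enough of the proximal descent; this is the role of the constant $c>0$ being strictly positive.

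The second step is a summability argument. Since $\Set{f(x^k)}$ is bounded below by $f_{\min}$ (Assumption~\ref{assum1}) and the accumulated errors $\sum_k e_k \lesssim \sum_k \norm{y^k-\hat x^k}^2 < \infty$ are finite by hypothesis~\eqref{eq:prox_error}, telescoping the approximate descent inequality shows that $f(x^k)$ converges to some limit $f_\infty \ge f_{\min}$ and that $\sum_k \norm{\hat x^k - x^k}^2 < \infty$. In particular $\norm{\hat x^k - x^k} \to 0$, and hence $\norm{x^{k+1}-x^k}\to 0$ as well, so the quantities $q_k = \norm{x^{k+1}-x^k}/t_k$ driving the update of $t_k$ tend to zero. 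Coercivity (Assumption~\ref{assum2}) is used to confine $\Set{x^k}$ to a bounded set, yielding a convergent subsequence $x^{k_j}\to \bar x$; by continuity $f(\bar x) = f_\infty$.

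The third and decisive step is to show $f_\infty = f_{\min}$. Passing $\hat x^k \in \prox_{t_k f}(x^k)$ to the limit along the subsequence, the first-order optimality condition for the proximal subproblem reads $0 \in \partial f(\hat x^k) + \tfrac{1}{t_k}(\hat x^k - x^k)$; since $\hat x^k - x^k \to 0$ and $t_k \ge \tau > 0$, the perturbation $\tfrac{1}{t_k}(\hat x^k - x^k)\to 0$, so a limiting/closedness argument for the subdifferential yields $0 \in \partial f(\bar x)$. Now Assumption~\ref{assum3} forces the dichotomy: either $f(\bar x) = f_{\min}$, or $f(\bar x) \ge f_{\min} + \mu$. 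To rule out the second alternative, I would invoke the mechanism of Lines~\ref{line:qk}--\ref{line:tk}: because $q_k \to 0$, the test $q_k \le \theta_1 q_{k-1} + \bar\epsilon$ is eventually satisfied and $t_k$ is repeatedly driven upward toward $T$. Taking $T$ large enough (the quantitative condition \eqref{eq:tbound}) makes the proximal regularization so weak that, were $\bar x$ a non-global critical point with $f(\bar x)\ge f_{\min}+\mu$, the global minimizer $x^*$ would strictly beat $\bar x$ in the proximal objective $f(z) + \tfrac{1}{2t_k}\norm{z - x^k}^2$, contradicting that $\hat x^k$ (near $\bar x$) is a minimizer. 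This contradiction eliminates the gap and gives $f_\infty = f(\bar x) = f_{\min}$; since the whole sequence $\Set{f(x^k)}$ converges, the full sequence converges to $f_{\min}$.

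The main obstacle I anticipate is the third step, specifically making the large-$T$ escape argument rigorous: one must quantify how large $T$ must be so that the weak regularization cannot trap the iterate at a non-global critical point, and couple this with the discrete dynamics of the $t_k$ update to ensure $t_k$ actually spends enough time near $T$. The passage $0 \in \partial f(\bar x)$ also requires care, since the subdifferential of Definition~\ref{def:subd} need not be outer semicontinuous for general continuous $f$; I expect to handle this by using the explicit proximal inequality $f(\hat x^k) + \tfrac{1}{2t_k}\norm{\hat x^k - x^k}^2 \le f(z) + \tfrac{1}{2t_k}\norm{z - x^k}^2$ for all $z$, and passing to the limit directly to obtain a global comparison at $\bar x$, rather than relying on abstract subdifferential calculus.
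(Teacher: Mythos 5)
Your overall architecture (descent-type inequality, telescoping against the lower bound, summability of $\norm{\hat x^k - x^k}^2$, boundedness via coercivity, a subsequence limit $\bar x$ shown to be a critical point with $f(\bar x)<f_{\min}+\mu$ via the large-$T$ proximal comparison, then Assumption~\ref{assum3}) is the same route the paper takes, and your fallback for the criticality step — passing the proximal inequality itself to the limit rather than invoking outer semicontinuity of $\partial f$ — is exactly what the paper does. However, your foundational first step has a genuine gap. You propose to derive
\[
f(x^{k+1}) \le f(x^k) - c\,\norm{\hat x^k - x^k}^2 + e_k,\qquad \sum_k e_k<\infty,
\]
from the exact descent $f(\hat x^k)\le f(x^k)-\tfrac{1}{2t_k}\norm{\hat x^k-x^k}^2$ plus "continuity of $f$." This cannot work under Assumption~\ref{assum1} alone: $x^{k+1}=\alpha_k y^k+(1-\alpha_k)x^k$ is a convex combination lying \emph{between} $x^k$ and (approximately) $\hat x^k$, and for a nonconvex $f$ the value at such an intermediate point can exceed both $f(x^k)$ and $f(\hat x^k)$ by an arbitrary amount. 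Even in the undamped case $\alpha_k=1$, converting the summable geometric errors $\sum_k\norm{y^k-\hat x^k}^2<\infty$ into a summable bound on $|f(y^k)-f(\hat x^k)|$ requires a quantitative modulus of continuity (e.g.\ local Lipschitzness), which is not assumed. So the quantity $f(x^k)$ simply does not admit the approximate descent inequality you want.

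The paper's fix is to run the Lyapunov argument on $f(\hat x^k)$ — the values at the \emph{exact} proximal points — using the optimality of $\hat x^{k+1}$ in the subproblem at $x^{k+1}$ tested against the competitor $z=\hat x^k$:
\[
f(\hat x^{k+1})+\tfrac{1}{2t_{k+1}}\norm{\hat x^{k+1}-x^{k+1}}^2\le f(\hat x^{k})+\tfrac{1}{2t_{k+1}}\norm{\hat x^{k}-x^{k+1}}^2.
\]
The right-hand side involves only the distance $\norm{\hat x^k-x^{k+1}}$, which is controlled by convexity of $\norm{\cdot}^2$ applied to $x^{k+1}=\alpha_k y^k+(1-\alpha_k)x^k$, yielding $\alpha_k\norm{\hat x^k-y^k}^2+(1-\alpha_k)\norm{\hat x^k-x^k}^2$; no function-value comparison at nearby points is ever needed. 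Telescoping then pits the positive terms $(1-\alpha_k)\norm{\hat x^k-x^k}^2/(2t_{k+1})$ against the negative terms $-\norm{\hat x^k-x^k}^2/(2t_k)$ shifted by one index, and $\alpha_{\min}>1-\eta_-$ together with $t_{k+1}\ge\eta_- t_k$ makes the net coefficient strictly negative — this is the actual mechanism behind that hypothesis, not a guarantee that the damped step "inherits descent." The transfer back from $f(\hat x^k)$ to $f(x^k)$ happens only at the very end, as a limit statement ($\norm{\hat x^k-x^k}\to 0$ plus uniform continuity on a compact set), which — unlike your summability requirement — is legitimate under mere continuity. I recommend you restructure your first two steps around $f(\hat x^k)$ accordingly; the remainder of your outline then goes through essentially as the paper's proof.
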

\begin{proof}
First, we show that $t_k = T$ for all $k$ sufficiently large. Indeed, for each $k$, since $\hat x^{k+1}\in \prox_{t_{k+1} f}(x^{k+1})$,
\[
f(\hat x^{k+1})+\frac{1}{2t_{k+1}}\norm{\hat x^{k+1}-x^{k+1}}^2
\le f(\hat x^{k})+\frac{1}{2t_{k+1}}\norm{\hat x^{k}-x^{k+1}}^2\,,
\]
which, combined with the convexity of $\norm{\cdot}^2$ and $x^{k+1} = \alpha_ky^k + (1-\alpha_k)x^k$, implies
\begin{align}\label{eq:xdiff}
& f(\hat x^{k+1})-f(\hat x^{k})\nonumber 
 \le   \frac{1}{2t_{k+1}}\left(\norm{\hat x^{k}-x^{k+1}}^2-\norm{\hat x^{k+1}-x^{k+1}}^2\right)\nonumber \\ 
= &\frac{1}{2t_{k+1}}\left(\norm{\alpha_k(\hat x^{k}- y^k)+(1-\alpha_k)(\hat x^k- x^k)}^2-\norm{\hat x^{k+1}-x^{k+1}}^2\right)\nonumber\\
\le & \frac{1}{2t_{k+1}}\left(\alpha_k \norm{\hat x^{k}- y^k}^2+(1-\alpha_k)\norm{\hat x^k- x^k}^2-\norm{\hat x^{k+1}-x^{k+1}}^2\right).
\end{align}
Therefore, combining the above inequity with the iterative parameter updates in Algorithm~\ref{alg:IPP} yields
\begin{align}\label{eq:boundfk}
-\infty<&f(x^*)\le  \limsup_{k\to\infty} f(\hat x^{k+1}) \nonumber\\ \le & f(\hat x^{0})+\sum_{k=0}^\infty
\frac{\alpha_k \norm{\hat x^{k}- y^k}^2+(1-\alpha_k)\norm{\hat x^k- x^k}^2-\norm{\hat x^{k+1}-x^{k+1}}^2}{2t_{k+1}}\nonumber\\
\le & f(\hat x^{0})+\frac{1}{2\tau}\sum_{k=0}^\infty\norm{\hat x^{k}- y^k}^2+
\frac{1-\alpha_{\min}}{\eta_-}\sum_{k=0}^\infty\frac{\norm{\hat x^k- x^k}^2}{2t_{k}}-\sum_{k=1}^\infty\frac{\norm{\hat x^k- x^k}^2}{2t_{k}}\nonumber\\
\le & f(\hat x^{0})+\frac{1}{2\tau}\sum_{k=0}^\infty\norm{\hat x^{k}- y^k}^2+
\frac{1-\alpha_{\min}-\eta_-}{2T\eta_-}\sum_{k=0}^\infty\norm{\hat x^k- x^k}^2\,.
\end{align}
By the assumptions that $\alpha_{\min}>1-\eta_-$ and \eqref{eq:prox_error}, it follows that $\sum_{k=0}^\infty\norm{\hat x^{k}-x^{k}}^2<\infty\,,$ which implies
\begin{equation}\label{eq:xk_approx}
\lim\limits_{k\to\infty} \norm{\hat x^{k}-x^{k}} = 0\,.
\end{equation}
Moreover, since $\alpha_k\in (0,1],$ by the convexity of $\norm{\cdot}$ and \eqref{eq:prox_error},
\begin{equation}\label{eq:xk1_approx}
0\le \lim_{k\to\infty}\norm{\hat x^{k}-x^{k+1}}\le\lim_{k\to\infty}\alpha_k\norm{\hat x^{k}-y_k} +(1-\alpha_k)\norm{\hat x^{k}-x^{k}}=0\,.
\end{equation}
By the triangle inequality, combining \eqref{eq:xk_approx} and \eqref{eq:xk1_approx} gives 
\[
0\le \lim_{k\to\infty}\norm{x^{k+1}-x^{k}}\le\lim_{k\to\infty}\norm{\hat x^{k}-x^{k}}+\lim_{k\to\infty}\norm{\hat x^{k}-x^{k+1}}=0\,.
\]
Therefore, $\lim\limits_{k\to\infty}\norm{x^{k+1}-x^{k}}=0.$ For $q_k$ in Line~\ref{line:qk} of Algorithm~\ref{alg:IPP}, since $t_k\in [\tau,T]\subset (0,\infty)$ for all $k,$
\[
\lim_{k\to\infty} q_k = \lim_{k\to\infty} \norm{x^{k+1}-x^k}/t_k = 0\,,
\]
which implies 
\[
\lim_{k\to\infty} \frac{q_k}{\theta_1 q_{k-1}+\bar\epsilon}= 0\,.
\]
Hence, the condition in Line~\ref{line:qk1} of Algorithm~\ref{alg:IPP} is satisfied for all sufficiently large \( k \), and \( t_k \) reaches the upper bound \( T \) within a finite number of iterations.

Next, we show that $\Set{\hat x^k}_{k\ge 0}$ and $\Set{x^k}_{k\ge 0}$ are bounded. Indeed, \eqref{eq:boundfk} implies that $\Set{f(\hat x^{k})}$ is bounded. 
If $\Set{\hat x^k}$ is unbounded, then there exists a subsequence 
$\Set{\hat x^{k_j}}$ such that $\lim_{j\to\infty}\norm{\hat x^{k_j}}=\infty,$ which contradicts
Assumption~\ref{assum2}. Therefore, $\Set{\hat x^k}$ is bounded. By \eqref{eq:prox_error}, it follows that
$\Set{y^k}_{k\ge 0}$ is bounded as well. Thus, it can be proved by induction that, there exists a constant $M>0$ such that, for all $k$,
\begin{equation}\label{eq:boundM}
\max\left\{\norm{\hat x^k}^2, \norm{x^k}^2\right\}\le M.
\end{equation}

Now we show there exists a subsequence of $\Set{f(x_k)}$ that converges to $f_{\min}.$
Since $\Set{\hat x^k}$ is bounded, by Bolzano-Weierstrass theorem, there exists a convergent subsequence, $\Set{\hat x^{k_j}}$, with
limit $x^{\infty}.$ By \eqref{eq:xk_approx},
\[
0\le \lim_{j\to\infty} \norm{x^{k_j}-x^{\infty}}\le \lim_{j\to\infty} \left(\norm{x^{k_j}-\hat x^{k_j}} +\norm{\hat x^{k_j}-x^{\infty}}\right)=0\,,
\]
i.e. $\lim\limits_{j\to\infty}x^{k_j} = x^{\infty}.$ 
Next, we show that $0\in\partial f(x^\infty).$ By the definition of Moreau envelope, we have
\begin{equation}
\label{eq:fu_1}
    f(x^\infty)\ge u(x^\infty,T):= \min_{z} f(z) + \frac{1}{2T} \norm{z-x^\infty}^2.
\end{equation}
On the other hand, for $\hat x^\infty\in \prox_{T f}(x^\infty),$
\begin{align}
f(x^\infty) = &\lim_{j\to\infty} f(\hat x^{k_j}) \le \lim_{j\to\infty}f(\hat x^{k_j}) + \frac{1}{2T}\|\hat x^{k_j}-x^{k_j}\|^2\notag\\ 
\le& \lim_{j\to\infty} f(\hat x^\infty) + \frac{1}{2T}\norm{\hat x^\infty -x^{k_j}}^2\notag\\
\le & \lim_{j\to\infty} u(x^\infty,T)+\frac{1}{2T}\left(\norm{\hat x^\infty -x^{k_j}}^2-\norm{\hat x^\infty -x^{\infty}}^2\right)\notag\\
=  &u(x^\infty,T)\,.
\label{eq:fu_2}
\end{align}
 Hence, \eqref{eq:fu_1} and \eqref{eq:fu_2} lead to
$f(x^\infty)= u(x^\infty,T),$ which,
by Definition~\ref{def:subd}, implies $0\in\partial f(x^\infty)$.
If $T>0$ is sufficiently large such that 
\begin{equation}\label{eq:tbound}
T>M/\mu
\end{equation}
for $M$ given in \eqref{eq:boundM} and $\mu$ given in Assumption~\ref{assum3}, then
\[
f(x^\infty) = u(x^\infty,T)\le f(x^*)+\frac{1}{2T}\norm{x^*-x^\infty}^2< f(x^*)+\mu\,.
\]
It follows from Assumption~\ref{assum3} that $f(x^\infty)=f_{\min}.$ 

It remains to show that the whole sequence $\Set{f(x^k)}_{k\ge 0}$ converges to $f_{\min}.$
Since $\sum\limits_{k=0}^{\infty} \norm{x^{k+1} - \hat x^k}^2<\infty$ and $\sum\limits_{k=0}^\infty\norm{\hat x^{k}-x^{k}}^2<\infty$, 
for arbitray $\kappa>0,$ there exists a constant $N_\kappa$ such that for all integers $k>l>N_\kappa,$
\[
\sum_{j=l}^{k-1} \norm{x^{j+1} - \hat x^j}^2<\kappa ~\textrm{ and } \sum_{j=l}^{k} \norm{x^{j+1} - \hat x^{j+1}}^2<\kappa \,.
\]
By \eqref{eq:xdiff}, for $k>l>N_\kappa,$
\begin{align*}
&f(\hat x^{k})-f(\hat x^{l})\\
\le & \frac{1}{2\tau}\sum_{j=l}^{k-1}\left(\alpha_j\norm{\hat x^{j}-y^j}^2+(1-\alpha_j)
\norm{\hat x^{j}-x^j}^2\right)-\frac{1}{2T}\sum_{j=l}^{k-1}\norm{\hat x^{j+1}-x^{j+1}}^2\\
<&\kappa\left(\frac{1}{2\tau}-\frac{1}{2T}\right).
\end{align*}
Since $\kappa>0$ is arbitrary, $\Set{f(\hat x^k)}_{k\ge 0}$ is convergent. Therefore,
$\lim\limits_{k\to\infty} f(\hat x^k) = \lim\limits_{j\to\infty} f(\hat x^{k_j}) = f_{\min}\,.$
By the continuity of $f$ and \eqref{eq:xk_approx},
\[
\lim\limits_{k\to\infty} f(x^k) = \lim\limits_{k\to\infty} f(\hat x^k) = f_{\min}.
\]
The proof is thus completed.
\end{proof}

The following theorem establishes the convergence of IPP to the global minimum when the iterates $\Set{x^k}_{k\ge 0}$ are based on
stochastic estimates of the proximal operators.
\begin{theorem}\label{thm:sIPP}
Suppose Assumptions~\ref{assum1} -- \ref{assum3} hold, $\alpha_{\min}>1-\eta_-$, and the choice of  $\,T>0$ is sufficiently large (see \eqref{eq:ptbound}) in Algorithm~\ref{alg:IPP}.
Let $\Set{x^k}_{k\ge 0}$ be a stochastic sequence of iterates generated by Algorithm~\ref{alg:IPP}. 
If there exists constants $\Set{\epsilon_k}$ with
$\sum\limits_{k=0}^\infty\epsilon_k<\infty$ and probabilities $\Set{p_k}$ with $\sum\limits_{k=0}^\infty p_k<\infty$ such that
\begin{equation}\label{eq:pprox_error}
\pr{\norm{y^k-\hat x^k}^2>\epsilon_k}\le p_k\,,
\end{equation}
where $\hat x^k\in \prox_{t_k f}(x^k)$, then $\Set{f(x^k)}_{k\ge 0}$ converges to $f_{\min}$ almost surely as $k\to\infty.$
\end{theorem}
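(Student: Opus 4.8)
The plan is to deduce the almost-sure statement from the deterministic Theorem~\ref{thm:IPP} by a Borel--Cantelli argument, and then to revisit the single place where the probabilistic analysis genuinely departs from the pathwise one, namely the admissible choice of $T$. First I would record the two elementary consequences of the hypotheses. Set $A_k := \Set{\norm{y^k-\hat x^k}^2 > \epsilon_k}$. Since $\sum_k \pr{A_k}\le \sum_k p_k < \infty$, the first Borel--Cantelli lemma gives $\pr{\limsup_k A_k}=0$; equivalently, on an event $\Omega_0$ of probability one there is a random index $K=K(\omega)$ with $\norm{y^k-\hat x^k}^2\le \epsilon_k$ for all $k\ge K$. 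Combined with $\sum_k\epsilon_k<\infty$, this yields for every $\omega\in\Omega_0$ that
\[
\sum_{k=0}^\infty \norm{y^k-\hat x^k}^2 \le \sum_{k=0}^{K-1}\norm{y^k-\hat x^k}^2 + \sum_{k=0}^\infty\epsilon_k < \infty,
\]
so the deterministic inexactness condition \eqref{eq:prox_error} holds along almost every realization. The structural point worth isolating is that the tail $\sum_{k\ge K}\norm{y^k-\hat x^k}^2$ is bounded by the \emph{deterministic} constant $\mathcal E:=\sum_k\epsilon_k$, uniformly in $\omega$; only the finite head depends on the path.

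With \eqref{eq:prox_error} available pathwise, I would replay the proof of Theorem~\ref{thm:IPP} for each fixed $\omega\in\Omega_0$. Inequalities \eqref{eq:xdiff}--\eqref{eq:boundfk} are purely algebraic and hold path-by-path, so on $\Omega_0$ one obtains $\sum_k\norm{\hat x^k-x^k}^2<\infty$, hence $\norm{\hat x^k-x^k}\to 0$ and $\norm{x^{k+1}-x^k}\to 0$, so that $q_k\to 0$ and $t_k=T$ for all large $k$. Boundedness of $\Set{\hat x^k}$ and $\Set{x^k}$ then follows from Assumption~\ref{assum2} exactly as in \eqref{eq:boundM}, a subsequential limit $x^\infty$ with $\lim_j f(\hat x^{k_j})$ exists, and the Moreau-envelope comparison gives $0\in\partial f(x^\infty)$. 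The final monotonicity estimate showing that the \emph{whole} sequence $\Set{f(\hat x^k)}$ (not merely a subsequence) converges is identical to the deterministic one, after which continuity of $f$ and $\norm{\hat x^k-x^k}\to 0$ transfer the limit to $\Set{f(x^k)}$. Since $\pr{\Omega_0}=1$, this delivers convergence almost surely, \emph{provided} the threshold on $T$ forcing $f(x^\infty)<f_{\min}+\mu$ is met.

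The main obstacle is precisely this threshold. In the deterministic case the condition \eqref{eq:tbound} is $T>M/\mu$, where $M$ bounds $\Set{\norm{\hat x^k}^2,\norm{x^k}^2}$ through \eqref{eq:boundM}; but here $M=M(\omega)$ is random, because \eqref{eq:boundfk} controls $\sup_k f(\hat x^k)$ only via the path-dependent quantity $\sum_k\norm{y^k-\hat x^k}^2$, and \eqref{eq:pprox_error} imposes no bound on $\norm{y^k-\hat x^k}$ on the bad events $A_k$. To make a single fixed $T$ valid on a probability-one event, I would split the control of $\sup_k f(\hat x^k)$ into the deterministic tail contribution $\mathcal E/(2\tau)$ and the finite head $\tfrac{1}{2\tau}\sum_{k<K}\norm{y^k-\hat x^k}^2$, and then use coercivity to convert a bound on $\sup_k f(\hat x^k)$ into a bound $\bar M$ on the iterate magnitudes; the appropriate condition \eqref{eq:ptbound} is the analogue $T>\bar M/\mu$, so that $f(x^\infty)=u(x^\infty,T)\le f(x^*)+\tfrac{1}{2T}\norm{x^*-x^\infty}^2<f(x^*)+\mu$ and Assumption~\ref{assum3} forces $f(x^\infty)=f_{\min}$. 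The genuinely delicate step I expect to require care is exhibiting such a $\bar M$ that works on a probability-one event with a fixed (deterministic) $T$: the tail is harmless, but the head mixes finitely many arbitrarily large bad-event contributions, so one must either derive an almost-sure finite bound on $\sup_k f(\hat x^k)$ from the finiteness of $\sum_k\norm{y^k-\hat x^k}^2$ on $\Omega_0$, or formulate \eqref{eq:ptbound} so that $T>\bar M/\mu$ holds with probability one; once this is secured, everything else descends verbatim from the deterministic argument.
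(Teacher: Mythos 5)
Your proposal is correct and follows essentially the same route as the paper's proof in Appendix~A.1: the first Borel--Cantelli lemma converts \eqref{eq:pprox_error} into the pathwise square-summability condition \eqref{eq:prox_error} on a probability-one event, after which the deterministic argument of Theorem~\ref{thm:IPP} is replayed realization by realization. The one point where you go beyond the paper is the observation that the bound $M$ in \eqref{eq:boundM} is a priori path-dependent (the finite ``head'' of $\sum_k\norm{y^k-\hat x^k}^2$ is not controlled by $\sum_k\epsilon_k$), so that a single deterministic $T>M/\mu$ is not automatic; the paper's own proof does not resolve this either --- it simply asserts that $\Set{\norm{\hat x^k}^2,\norm{x^k}^2}$ is uniformly bounded by a constant $M$ with probability one and imposes \eqref{eq:ptbound} with that $M$ --- so your flagged ``delicate step'' is a genuine subtlety present in both arguments rather than a defect of yours alone.
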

\begin{proof}
See Appendix~\ref{app:proof27}.
\end{proof}

 \section{Approximation of the proximal operator}\label{sec:analysis}
 As discussed in the previous section, the convergence of an IPP method relies on estimating 
the proximal operators with sufficient asymptotic accuracy. In this section, we consider the approximation of $\operatorname{prox}_{t f}(x)$ via the Gibbs measure associated with the function
 \begin{equation}\label{eqn:phi}
 \phi(z) := f(z)+\frac{1}{2t}\norm{z-x}^2.
 \end{equation}
We focus on the case where the proximal operator is single-valued, i.e. the function $\phi$ has a unique global minimizer 
$z^*=\prox_{tf}(x).$
In this case, the Gibbs measure defined by \eqref{eq:gibbs} approximates the Dirac measure centered at $z^*$ for some small $\delta>0$.
Hence, an approximation of the proximal operator is given by 
\begin{equation}\label{eqn:aprox}
\prox_{tf}^\delta(x) := \frac{\int z\exp\left(-\phi(z)/\delta\right)dz}{\int\exp{(-\phi(z)/\delta)dz}}\approx \prox_{tf}(x)\,.
\end{equation}

The proximal operator is single-valued under a wide range of conditions. For relatively small $t>0,$ a standard result is that it is single-valued if $f$ is \emph{prox-regular}, a property that holds for all convex functions and a broad class of nonconvex functions \cite{poliquin2010calculus}.
For sufficiently large $t$, it is single-valued if $x^*$ is a \emph{nondegenerate} minimizer when $f$ is $C^2$ around $x^*$, 
or if $f$ is \emph{sharp} \cite{davis2018subgradient,dinh2017sharp} around $x^*$ in the nonsmooth case. 
These conditions and required definitions are summarized in the following definitions and Proposition~\ref{prop:single_prox} below.
\begin{definition}
A stationary point $x^*$ of $f$ is called \emph{nondegenerate} if $f$ is $C^2$ around $x^*$ and the Hessian $\Hess f(x^*)$ is nonsingular.
\end{definition}

\begin{definition}
 We say $f$ is \emph{sharp} on a neighborhood $U$ of $x^\ast$ if there exists $\eta>0$ such that
    \begin{equation}\label{eq:sharp}
    f(x)-f_{\min}\ge \eta\norm{x-x^\ast},\quad \forall x\in U\,.
    \end{equation} 
\end{definition}
The sharpness condition holds for a wide class of nonconvex nonsmooth functions, see e.g., 
\cite{candes2015phase, hoffman2003approximate, attouch2013convergence, bolte2014proximal, karimi2016linear}.

 \begin{proposition}\label{prop:single_prox}
The proximal operator $\operatorname{prox}_{t f}(x)$ is single-valued under any of the following conditions:
\begin{enumerate}  
    \item \textbf{Prox-Regularity of $f$:} If $f$ is \emph{prox-regular} at $x$, meaning there exists a constant $r > 0$ such that for all $x' \neq x''$ near $x$,
    \[
    f(x'') > f(x') + \langle v, x'' - x' \rangle - \frac{r}{2} \| x'' - x' \|^2,
    \]
    where $v \in \partial f(x')$, then $\operatorname{prox}_{t f}(x)$ is single-valued for $t < 1/r$; or
     \item \textbf{Nondegeneracy of $x^\ast$:} In addition to Assumptions~\ref{assum1} -- \ref{assum3}, if $x^\ast$ 
    is a unique global minimizer of $f$, nondegenerate, and
    if $f$ is $C^2$ on a neighborhood $U$ of $x^\ast$, then $\operatorname{prox}_{t f}(x)$ is single-valued for all sufficiently large $t$; or
    \item \textbf{Sharpness at $x^\ast$:} In addition to Assumptions~\ref{assum1} -- \ref{assum3}, 
    if $x^\ast$ is a unique global minimizer of $f$, and
    if $f$ is sharp on a neighborhood $U$ of $x^\ast,$
    then $\operatorname{prox}_{t f}(x)$ is single-valued for all sufficiently large $t$.
\end{enumerate}
\end{proposition}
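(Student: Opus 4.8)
The plan is to treat the three sufficient conditions separately while factoring out a common \emph{localization} step that serves Cases~2 and 3. Throughout, write $\phi$ as in \eqref{eqn:phi} and recall that single-valuedness of $\prox_{tf}(x)$ just means $\phi$ has a unique global minimizer. First I would record a localization lemma: under Assumptions~\ref{assum1}--\ref{assum2}, if $x^*$ is the \emph{unique} global minimizer of $f$, then for every neighborhood $V$ of $x^*$ there is a threshold $T_V>0$ such that for all $t>T_V$ every global minimizer $z^*$ of $\phi$ lies in $V$. The argument is short: continuity, coercivity (Assumption~\ref{assum2}) and uniqueness of $x^*$ force $c:=\inf_{z\notin V}f(z)-f_{\min}>0$, whereas any global minimizer obeys $f(z^*)\le\phi(z^*)\le\phi(x^*)=f_{\min}+\frac{1}{2t}\norm{x-x^*}^2$, so $z^*\in V$ as soon as $t>\norm{x-x^*}^2/(2c)$. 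This is exactly where the hypothesis ``$t$ sufficiently large'' enters in Cases~2 and 3.

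For Case~1, I would use (hypo)monotonicity of $\partial f$. If $z_1,z_2$ both minimize $\phi$, the optimality condition $0\in\partial\phi(z_i)=\partial f(z_i)+\frac1t(z_i-x)$ gives $v_i:=\frac1t(x-z_i)\in\partial f(z_i)$. Applying the prox-regularity inequality to the ordered pairs $(z_1,z_2)$ and $(z_2,z_1)$ and adding yields $0>\langle v_1-v_2,\,z_2-z_1\rangle-r\norm{z_1-z_2}^2$; since $v_1-v_2=\frac1t(z_2-z_1)$, the inner product equals $\frac1t\norm{z_1-z_2}^2$, so $0>(\frac1t-r)\norm{z_1-z_2}^2$, which is impossible for $t<1/r$ unless $z_1=z_2$. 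The only delicate point is that $z_1,z_2$ must lie in the neighborhood of $x$ on which prox-regularity is assumed; I would handle this by working with the localized proximal mapping, as in the standard Poliquin--Rockafellar setting.

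For Case~2, nondegeneracy together with the second-order necessary condition at the minimizer gives $\Hess f(x^*)\succ 0$, so by continuity of $\Hess f$ there is a convex ball $V=B(x^*,\rho)\subset U$ on which $\Hess f\succeq\frac{\lambda}{2}I$ for some $\lambda>0$; then $\Hess\phi=\Hess f+\frac1t I\succ 0$ makes $\phi$ strongly convex on $V$, hence with at most one minimizer there. Invoking the localization lemma to place all global minimizers of $\phi$ inside $V$ for large $t$ then forces uniqueness. For Case~3, I would instead use the sharpness bound \eqref{eq:sharp} directly: for $z\in U$, expanding $\norm{z-x}^2$ about $x^*$ gives $\phi(z)-\phi(x^*)\ge\norm{z-x^*}\bigl(\eta-\norm{x-x^*}/t\bigr)+\frac{1}{2t}\norm{z-x^*}^2$, which is strictly positive for $z\neq x^*$ once $t>\norm{x-x^*}/\eta$. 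Thus $x^*$ strictly minimizes $\phi$ over $U$, and combining this with the localization lemma (which puts every global minimizer in $U$) shows $\prox_{tf}(x)=\Set{x^*}$ for all large $t$.

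The main obstacle is not any one estimate but the interplay between \emph{local} and \emph{global} minimization: each condition yields uniqueness only on a neighborhood, and one must certify that no competing global minimizer survives outside it. The localization lemma supplies this control for Cases~2 and 3, and the analogous bookkeeping for the localized proximal mapping in Case~1 is the point that needs to be stated carefully.
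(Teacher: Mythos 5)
Your proof is correct and follows essentially the same strategy as the paper's: an explicit localization step (large $t$ forces every global minimizer of $\phi$ into a prescribed neighborhood of $x^*$, via $f(z^*)-f_{\min}\le\frac{1}{2t}\norm{x-x^*}^2$ and coercivity) combined with a local uniqueness argument in each case, and your Case 3 computation is the paper's argument in a slightly cleaner form. The only substantive differences are that you prove Case 1 directly via hypomonotonicity of $\partial f$ where the paper simply cites Poliquin--Rockafellar (your acknowledged caveat about restricting to the localized proximal mapping is exactly the point that citation absorbs), and in Case 2 you argue via strong convexity of $\phi$ on a convex ball rather than the paper's equivalent route through strong monotonicity of $\Grad f$ together with the stationarity condition $\Grad f(z_i)=(x-z_i)/t$.
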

\begin{proof}
See Appendix~\ref{app:proof31}.
\end{proof}

The following theorem indicates the convergence of the approximate proximal operator given in \eqref{eqn:aprox} as $\delta\to 0^+$ under the 
condition that $\phi$ is continuous around $z^*.$
A similar result was shown in \cite{tibshirani2024laplace}, but under stronger assumptions.
\begin{theorem}
\label{thm:conv}
Assume that $\phi:\mathbb R^d\to \mathbb R$ is $p$-coercive for some $p>0$, and that $\phi$ has a unique global minimizer $z^*$. If there exists a neighborhood $U$ of $z^*$ such that $\phi$ restricted on $U$ is continuous, then
\begin{equation}
\label{eqn:mean_gibbs}
    \lim_{\delta\to 0^+}\frac{\int z\exp\left(-\phi(z)/\delta\right)dz}{\int\exp{(-\phi(z)/\delta)dz}} = z^*\,.
\end{equation}
\end{theorem}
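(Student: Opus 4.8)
The plan is to establish \eqref{eqn:mean_gibbs} by a Laplace-type concentration argument: as $\delta\to 0^+$ the Gibbs measure associated with $\phi$ collapses onto $z^*$, and $p$-coercivity prevents the unbounded integrand $z$ from spoiling the limit. First I would normalize so that $\phi(z^*)=0$; subtracting the constant $\phi(z^*)$ from $\phi$ multiplies numerator and denominator of \eqref{eqn:mean_gibbs} by the same factor $\exp(\phi(z^*)/\delta)$ and hence leaves the ratio unchanged, while giving $\phi\ge 0$ with equality only at $z^*$. Letting $Z_\delta$ be distributed according to $\rho_\delta$, it suffices to show $\expect{\norm{Z_\delta-z^*}}\to 0$, since
\[
\norm{\expect{Z_\delta}-z^*}=\norm{\int_{\Re^d}(z-z^*)\,d\rho_\delta(z)}\le\expect{\norm{Z_\delta-z^*}}.
\]
Fixing $\epsilon>0$ and splitting $\Re^d$ into the ball $B_\epsilon(z^*)$ and its complement $A_\epsilon:=\Set{z:\norm{z-z^*}\ge\epsilon}$, the contribution of $B_\epsilon(z^*)$ to $\expect{\norm{Z_\delta-z^*}}$ is at most $\epsilon$, so everything reduces to proving that the far-field ratio
\[
R_\delta:=\frac{\int_{A_\epsilon}\norm{z-z^*}\exp(-\phi(z)/\delta)\,dz}{\int_{\Re^d}\exp(-\phi(z)/\delta)\,dz}
\]
vanishes as $\delta\to 0^+$.

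The two integrals are then estimated separately. For the denominator I would use only continuity of $\phi$ on $U$: given any $\eta>0$, continuity at $z^*$ together with $\phi(z^*)=0$ furnishes a radius $r>0$ with $B_r(z^*)\subset U$ and $\phi\le\eta$ on $B_r(z^*)$, so that $\int_{\Re^d}\exp(-\phi/\delta)\ge\abs{B_r(z^*)}\exp(-\eta/\delta)$ for all $\delta>0$. For the numerator, set the \emph{energy gap} $m(\epsilon):=\inf_{A_\epsilon}\phi$ and suppose for now that $m(\epsilon)>0$. On $A_\epsilon$ one has $\phi\ge m(\epsilon)$, so $\exp(-\phi/\delta)\le\exp(-m(\epsilon)/(2\delta))\exp(-\phi/(2\delta))$; moreover, for $\delta\le\delta_0$ the inequality $\phi\ge 0$ gives $\exp(-\phi/(2\delta))\le\exp(-\phi/(2\delta_0))$. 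Combining these,
\[
\int_{A_\epsilon}\norm{z-z^*}\exp(-\phi/\delta)\,dz\le \exp\!\left(-\frac{m(\epsilon)}{2\delta}\right)\int_{\Re^d}\norm{z-z^*}\exp(-\phi/(2\delta_0))\,dz,
\]
and the last integral, call it $C_0$, is finite because $p$-coercivity makes $\exp(-\phi/(2\delta_0))$ decay faster than any polynomial. Dividing by the denominator bound yields $R_\delta\le (C_0/\abs{B_r(z^*)})\exp(-(m(\epsilon)/2-\eta)/\delta)$ for $\delta\le\delta_0$; choosing $\eta<m(\epsilon)/2$ forces $R_\delta\to 0$. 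Therefore $\limsup_{\delta\to 0^+}\norm{\expect{Z_\delta}-z^*}\le\epsilon$, and letting $\epsilon\to 0^+$ completes the argument.

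The main obstacle is the energy-gap claim $m(\epsilon)>0$, i.e. that $\phi$ stays uniformly above its minimum value on the complement of any ball around $z^*$. I would argue this from the two global hypotheses: take a minimizing sequence $z_n\in A_\epsilon$ with $\phi(z_n)\to m(\epsilon)$; $p$-coercivity keeps $\Set{z_n}$ bounded, so a subsequence converges to some $\bar z\in A_\epsilon$, and since $z^*$ is the \emph{unique} global minimizer we have $\bar z\ne z^*$, whence $\phi(\bar z)>0$. Passing $\phi(z_n)\to\phi(\bar z)$ to the limit requires lower semicontinuity of $\phi$ away from $z^*$, which holds in particular whenever $\phi$ is continuous (as for $\phi(z)=f(z)+\norm{z-x}^2/(2t)$ with $f$ continuous). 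In the weaker regime where $\phi$ is only assumed continuous near $z^*$, I would instead replace the pointwise gap by the measure-theoretic statement that the sublevel sets shrink, $\abs{\Set{z:\phi(z)\le s}}\to 0$ as $s\to 0^+$ (a consequence of coercivity and uniqueness of the minimizer), and run the same comparison through a layer-cake decomposition of the numerator integral; this is the one place where the interplay between $p$-coercivity and the single global minimizer is essential.
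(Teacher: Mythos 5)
Your argument is essentially the paper's own proof: both are Laplace-type concentration arguments that normalize $\phi(z^*)=0$, split $\Re^d$ into a small ball around $z^*$ (contributing at most $\epsilon$ to $\norm{\expect{Z_\delta}-z^*}$), a bounded region where a positive energy gap forces exponential decay, and a far field controlled by $p$-coercivity, with the denominator bounded below via continuity of $\phi$ near $z^*$; your $\exp(-\phi/\delta)\le \exp(-m(\epsilon)/(2\delta))\exp(-\phi/(2\delta_0))$ device is a slightly cleaner way to get integrability of $\norm{z-z^*}\exp(-\phi/\delta)$ over the unbounded region than the paper's $c\eta^p+(1-c)\norm{z-z^*}^p$ split, and you are right that the positive gap $m(\epsilon)>0$ is the one step needing lower semicontinuity of $\phi$ away from $z^*$ --- the paper asserts $\min_{\epsilon\le\norm{z-z^*}\le\eta}\tilde\phi=\tilde\epsilon>0$ without comment, so you are, if anything, more careful here. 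One caution: your proposed measure-theoretic fallback (that $\abs{\Set{z:\phi(z)\le s}}\to 0$ suffices via a layer-cake decomposition) does not actually rescue the statement, because the denominator itself decays to $0$ (e.g.\ like $\delta^{d/2}$ in the smooth case), so knowing the far-field numerator is $o(1)$ gives no control on the ratio; without a quantitative gap one can arrange $\phi$ (continuous near $z^*$, coercive, unique minimizer, but not lsc elsewhere) whose Gibbs mass escapes to a region where $\inf\phi=0$ is not attained, so stick with the lower-semicontinuity route, which covers the paper's intended application $\phi=f+\norm{\cdot-x}^2/(2t)$ with $f$ continuous.
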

\begin{proof}
See Appendix~\ref{app:proof34}.
\end{proof}


When $z^*$ is nondegenerate, an error bound for the approximate proximal operator in \eqref{eqn:aprox} can be obtained according to the following theorem.
A similar result was derived in \cite[Chapter 9]{wong2001asymptotic}, but under stronger assumptions.
\begin{theorem}\label{thm_C2}
Assume that $\phi:\mathbb R^d\to \mathbb R$ is $p$-coercive for some $p>0$, and that $\phi$ has a unique nondegenerate global minimizer $z^*$.
Also, assume that there exists a neighborhood $U$ of $z^*$ such that $\phi$ restricted on $U$ is twice continuously differentiable.
Then
as $\delta\to 0^+,$
\[
\norm{\frac{\int z\exp\left(-\phi(z)/\delta\right)dz}{\int\exp{(-\phi(z)/\delta)dz}}-z^*} = \bigO(\delta).
\]
\end{theorem}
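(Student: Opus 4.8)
The plan is to combine the localization already available from Theorem~\ref{thm:conv} with a rescaled Laplace expansion around $z^*$, and to exploit the symmetry of the Gaussian leading term to upgrade the naive $\bigO(\sqrt\delta)$ estimate to $\bigO(\delta)$. Denote by $\mu_\delta$ the ratio in the statement, so that
\[
\mu_\delta-z^*=\frac{\int (z-z^*)\exp(-\phi(z)/\delta)\,dz}{\int \exp(-\phi(z)/\delta)\,dz}.
\]
First I would fix a neighborhood $U$ of $z^*$ on which $\phi$ is $C^2$ and show that restricting both integrals to $U$ changes $\mu_\delta$ only by an exponentially small amount $\bigO(e^{-c/\delta})$. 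This is the same tail estimate underlying Theorem~\ref{thm:conv}: $p$-coercivity controls $z$ at infinity, and uniqueness of the global minimizer gives $\phi(z)-\phi(z^*)\ge c>0$ on $\Re^d\setminus U$, so the outside mass is negligible at every polynomial order in $\delta$.

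On $U$ I would Taylor expand. Because $z^*$ is a global minimizer, $\nabla\phi(z^*)=0$, and nondegeneracy makes $H:=\Hess\phi(z^*)$ symmetric positive definite; write $\phi(z)-\phi(z^*)=\tfrac12(z-z^*)^\top H(z-z^*)+r(z-z^*)$ with $r(u)=o(\norm{u}^2)$. Substituting $z=z^*+\sqrt\delta\,w$ cancels the factor $e^{-\phi(z^*)/\delta}$ and yields
\[
\mu_\delta-z^*=\sqrt\delta\,\frac{\int w\exp\!\left(-\tfrac12 w^\top H w-\frac{r(\sqrt\delta w)}{\delta}\right)dw}{\int \exp\!\left(-\tfrac12 w^\top H w-\frac{r(\sqrt\delta w)}{\delta}\right)dw}+\bigO(e^{-c/\delta}).
\]
The positive definiteness of $H$ supplies a fixed Gaussian envelope $e^{-\frac12 w^\top H w}$ dominating the integrands, which is what legitimizes interchanging limits and integration below.

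The key point is that the leading term vanishes by oddness: $\int w\,e^{-\frac12 w^\top H w}\,dw=0$. Hence the bracketed ratio tends to $0$, and only the odd part of the perturbation $r(\sqrt\delta w)/\delta$ contributes against the even Gaussian weight and the odd factor $w$. Expanding $\exp(-r/\delta)=1-r/\delta+\cdots$, the surviving contribution is governed by the cubic part of $r$, whose rescaling is of size $\bigO(\sqrt\delta\,\norm{w}^3)$; integrating against the Gaussian weight shows the bracketed ratio is $\bigO(\sqrt\delta)$, so that $\mu_\delta-z^*=\sqrt\delta\cdot\bigO(\sqrt\delta)=\bigO(\delta)$. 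An equivalent and perhaps cleaner route is the divergence identity $\int\nabla\phi(z)\,e^{-\phi(z)/\delta}dz=0$ (valid on $U$ up to exponentially small boundary terms): substituting $\nabla\phi(z)=H(z-z^*)+\nabla r(z-z^*)$ gives $H(\mu_\delta-z^*)=-\expect{\nabla r(Z_\delta-z^*)}+\bigO(e^{-c/\delta})$, and bounding $\expect{\nabla r(Z_\delta-z^*)}$ via $\expect{\norm{Z_\delta-z^*}^2}=\bigO(\delta)$ again produces the $\bigO(\delta)$ rate after multiplying by $H^{-1}$.

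The main obstacle is the uniform control of the rescaled remainder $r(\sqrt\delta w)/\delta$: pointwise it tends to $0$, but to expand the exponential and pass to the limit under the integral one needs a $\delta$-uniform integrable majorant, obtained from the Gaussian envelope on the region $\norm{w}=o(\delta^{-1/2})$ and by absorbing the complement into the exponentially small tail. The truly delicate step is extracting the \emph{sharp} $\bigO(\delta)$ rate rather than merely $o(\sqrt\delta)$: this rests on a quantitative bound on the Taylor remainder near $z^*$ together with the second-moment estimate $\expect{\norm{Z_\delta-z^*}^2}=\bigO(\delta)$, which is precisely where the twice-differentiability of $\phi$ at the nondegenerate minimizer enters.
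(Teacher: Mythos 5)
Your overall strategy---localize using $p$-coercivity, pass to the rescaled variable $w=(z-z^*)/\sqrt\delta$, and gain the extra factor of $\sqrt\delta$ from the oddness of $\int w\,e^{-\frac12 w^\top Hw}\,dw$---is the same cancellation mechanism the paper exploits, but the execution differs: the paper invokes the Morse lemma to change coordinates so that $\phi-\phi(z^*)$ becomes \emph{exactly} quadratic and then applies a one-dimensional Laplace expansion (its Lemma~\ref{lem:integral}) to the resulting prefactor, whereas you keep the perturbation $r$ inside the exponent and expand $\exp(-r/\delta)$.

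This difference is where your argument has a genuine gap. Under the stated hypothesis $\phi$ is only $C^2$ near $z^*$, so the Taylor remainder satisfies merely $r(u)=o(\norm{u}^2)$; there is no ``cubic part of $r$'' and no bound of the form $r(\sqrt\delta w)/\delta=\bigO(\sqrt\delta\,\norm{w}^3)$. With only $r(\sqrt\delta w)/\delta=o(\norm{w}^2)$, the bracketed ratio in your display is $o(1)$ rather than $\bigO(\sqrt\delta)$, and you obtain $\mu_\delta-z^*=o(\sqrt\delta)$, not the claimed $\bigO(\delta)$. The same problem afflicts your divergence-identity variant: bounding $\expect{\Grad r(Z_\delta-z^*)}$ by $\expect{\norm{Z_\delta-z^*}^2}=\bigO(\delta)$ requires $\norm{\Grad r(u)}\le C\norm{u}^2$, i.e.\ a locally Lipschitz Hessian or $C^3$ regularity, neither of which is assumed. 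You flag this as the ``truly delicate step'' but attribute the needed quantitative remainder bound to twice-differentiability, which does not supply it. The paper sidesteps the issue: after the Morse change of variables the exponent carries no remainder at all, the $\delta$-dependence sits entirely in the prefactor $T^{-1}(y)\det((T^{-1})'(y))$, its linear Taylor term is killed by oddness, and its second-order term contributes the $\bigO(\delta^{d/2+1})$ correction---and a second-order Taylor expansion is exactly what $C^2$ regularity of the chart provides. To repair your proof along your own lines you would either need to strengthen the hypothesis to a locally Lipschitz Hessian near $z^*$ (which yields $r(u)=\bigO(\norm{u}^3)$ and makes your estimate go through), or adopt the paper's device of removing the remainder from the exponent before expanding.
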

\begin{proof}
See Appendix~\ref{app:proof35}
\end{proof}

According to Theorems~\ref{thm:conv} and \ref{thm_C2}, evaluating the parameterized operator, $\prox_{tf}^\delta(x)$, as given in 
\eqref{eqn:aprox} for a small $\delta>0$ indeed provides a good approximation of the proximal operator. To illustrate this numerically, Figure~\ref{fig:prox_acc} displays the approximation errors for 
the two-dimensional Ackley function \cite{benchmark_Andrea}.
Figure~\ref{fig:2a} indicates that sufficiently accurate approximations may be obtained by choosing $\delta\le 0.5.$
Figure~\ref{fig:2b} indicates that for varied $\delta$ and $x,$ $\prox_{tf}^\delta(x)$ is closer to the global minimizer $x^*$ than $x,$ making it effective in an IPP method. Additionally, abundant numerical evidence demonstrating the effectiveness of approximating the proximal operator 
using MC estimates of $\prox^\delta_{tf}(x)$ can be found in \cite{osher2023hamilton, tibshirani2024laplace}.

\begin{figure}[H]
    \centering
    \subfloat[]{\includegraphics[width=0.48\textwidth]{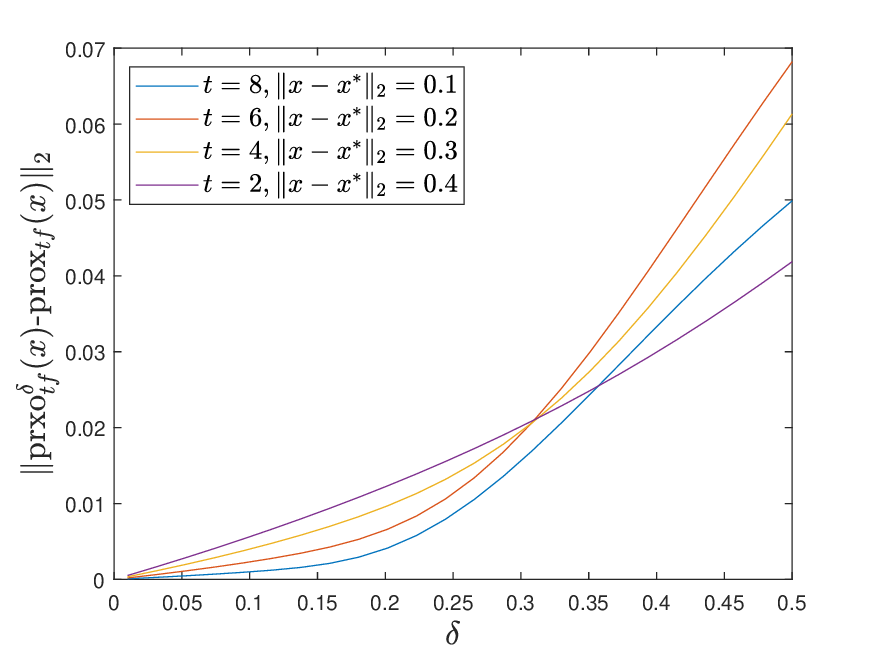} \label{fig:2a}}
    \hfill
    \subfloat[]{\includegraphics[width=0.48\textwidth]{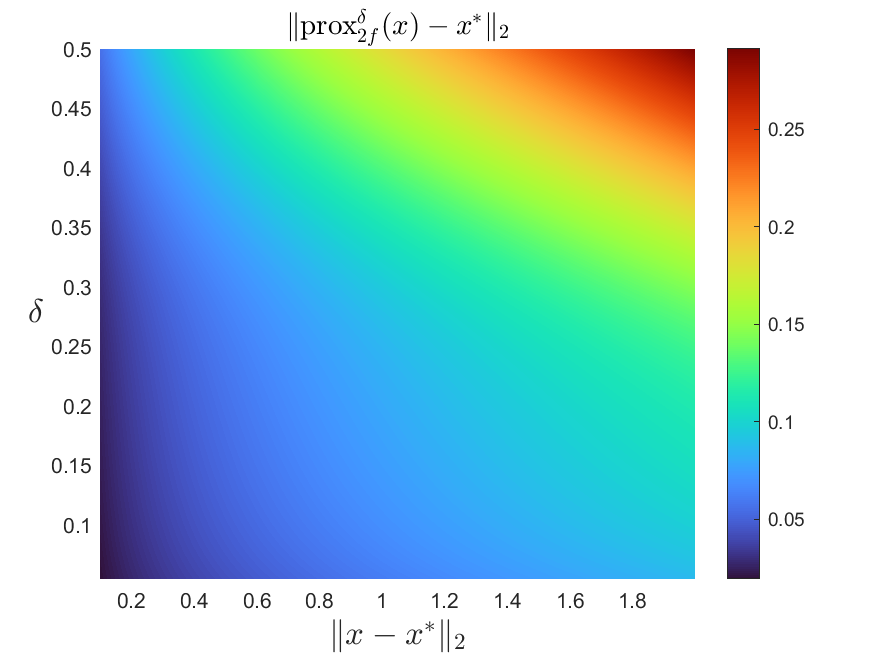} \label{fig:2b}}
    \vspace{-0.5cm}
    \caption{Approximations of \( \prox_{tf}(x) \) by \( \prox^\delta_{t f}(x) \) for Ackley function \cite{benchmark_Andrea}. (a)
    Approximation errors for varied $t$, $\delta$ and $x$; (b) Distances between \( \prox_{t f}^\delta(x) \) 
    and $x^*$ for varied $\delta$ and $x$, with $t = 2$.} 
    \label{fig:prox_acc}
\end{figure}


Of theoretical interest, when the proximal operator is multi-valued, the following corollary implies that 
 $\prox_{tf}^\delta(x)$ given in \eqref{eqn:aprox}
approximates a point in the convex hull of $\prox_{tf}(x)$.
\begin{corollary}\label{cor:convexHull}
Assume that $\phi:\mathbb R^d\to \mathbb R$ is $p$-coercive
for some $p>0$, and that $\phi$ has multiple nondegenerate global minimizers $z_1^*,\cdots,z_m^*$.
Also assume that, for each $j\in\Set{1,\cdots,m}$, there exists a neighborhood $U_j$ of $z_j^*$ such that $\phi$ restricted on $U_j$ is 
twice continuously differentiable.
Then, as $\delta\to 0^+,$
\[
\norm{\frac{\int z\exp\left(-\phi(z)/\delta\right)dz}{\int \exp{(-\phi(z)/\delta)dz}}-\bar z^*} = \bigO(\delta)\,,
\]
for some $\bar z^*\in \Set{\sum_{j=1}^m a_j z^*_j: a_j\ge 0 \textrm{ and } \sum_{j=1}^m a_j=1}.$
\end{corollary}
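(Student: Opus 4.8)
The plan is to localize the Gibbs integral around each of the $m$ nondegenerate wells and combine the single-well Laplace asymptotics that already underlie Theorem~\ref{thm_C2}. Write $v^*:=\phi(z_1^*)=\cdots=\phi(z_m^*)$ for the common global minimum value (all $z_j^*$ are global minimizers, so these values agree). After shrinking the neighborhoods if necessary, I may assume the $U_j$ are pairwise disjoint and that on each $U_j$ the function $\phi$ is $C^2$ with positive-definite Hessian $\Hess\phi(z_j^*)$. Set $I_j(\delta):=\int_{U_j}\exp(-\phi(z)/\delta)\,dz$ and $N_j(\delta):=\int_{U_j} z\exp(-\phi(z)/\delta)\,dz$, and let $I_{\mathrm{rem}},N_{\mathrm{rem}}$ be the corresponding integrals over $R:=\Re^d\setminus\bigcup_j U_j$. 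The quantity of interest is then $\bigl(\sum_j N_j+N_{\mathrm{rem}}\bigr)/\bigl(\sum_j I_j+I_{\mathrm{rem}}\bigr)$.

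First I would show the remainder is exponentially negligible. Since $f$ is continuous (Assumption~\ref{assum1}), so is $\phi$, and by $p$-coercivity there is a radius $\rho$ with $\phi>v^*+1$ outside the ball of radius $\rho$. On the compact set $R$ intersected with that ball, the continuous function $\phi$ attains a minimum exceeding $v^*$, because $z_1^*,\dots,z_m^*$ are the only global minimizers; hence there is $c>0$ with $\phi(z)\ge v^*+c$ for all $z\in R$. Consequently $\exp(v^*/\delta)\,I_{\mathrm{rem}}=\bigO(\exp(-c/\delta))$, and using coercivity to dominate $\norm{z}$ in the vector integral also $\exp(v^*/\delta)\norm{N_{\mathrm{rem}}}=\bigO(\exp(-c'/\delta))$ for some $c'>0$; both decay faster than any power of $\delta$.

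Next, on each well I would invoke the local Laplace expansion that drives the proof of Theorem~\ref{thm_C2}. Writing $C_j:=(\det\Hess\phi(z_j^*))^{-1/2}>0$, nondegeneracy and $C^2$-smoothness on $U_j$ give, as $\delta\to 0^+$,
\[
\exp(v^*/\delta)\,(2\pi\delta)^{-d/2}\,I_j(\delta)=C_j\bigl(1+\bigO(\delta)\bigr),\qquad
\exp(v^*/\delta)\,(2\pi\delta)^{-d/2}\,N_j(\delta)=C_j\bigl(z_j^*+\bigO(\delta)\bigr).
\]
The second identity is the vector-valued Laplace estimate: after the rescaling $z=z_j^*+\sqrt\delta\,w$ the linear term $\sqrt\delta\,w$ integrates to zero against the even Gaussian weight, leaving $z_j^*$ plus an $\bigO(\delta)$ correction coming from the cubic Taylor term of $\phi$, exactly as in the single-minimizer case.

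Finally I would divide numerator and denominator by $\exp(-v^*/\delta)(2\pi\delta)^{d/2}$; the remainder contributions vanish faster than any power of $\delta$, so that
\[
\frac{\sum_j N_j+N_{\mathrm{rem}}}{\sum_j I_j+I_{\mathrm{rem}}}
=\frac{\sum_j C_j z_j^*+\bigO(\delta)}{\sum_j C_j+\bigO(\delta)}
=\bar z^*+\bigO(\delta),\quad \bar z^*:=\sum_{j=1}^m a_j z_j^*,\ \ a_j:=\frac{C_j}{\sum_{k=1}^m C_k},
\]
where the last step is valid because $\sum_k C_k>0$ is bounded away from $0$. Since $a_j>0$ and $\sum_j a_j=1$, the limit $\bar z^*$ lies in the convex hull of $\Set{z_1^*,\dots,z_m^*}$, as claimed. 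The main obstacle is the second local estimate, the $\bigO(\delta)$ control of the linear integrand $N_j$, whose integrand does not decay and forces one to track the next-order term of the Laplace expansion; this is precisely where nondegeneracy and the $C^2$ hypothesis enter, and it is handled as in Theorem~\ref{thm_C2}. A secondary point, uniformity of the $\bigO(\delta)$ errors across the wells, is immediate since there are only finitely many $U_j$.
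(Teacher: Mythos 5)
Your proposal is correct and follows essentially the same route as the paper: the paper defines the identical limit point $\bar z^*$ with weights $a_j\propto(\det\Hess\phi(z_j^*))^{-1/2}$ and invokes the single-well Laplace asymptotics from Theorem~\ref{thm_C2} on each minimizer, exactly as you do. Your write-up merely makes explicit the well decomposition, the exponential negligibility of the region away from the minimizers, and the per-well expansions that the paper leaves implicit in the phrase ``following similar arguments.''
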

\begin{proof}
See Appendix~\ref{app:proof36}.
\end{proof}

\section{Tensor train for estimating the proximal operator}\label{sec:approx}
Based on results in Section~\ref{sec:analysis}, an inexact evaluation of the proximal operator can be obtained by estimating 
$\prox^\delta_{tf}(x)$ defined in \eqref{eqn:aprox}
for some small $\delta>0.$
Deterministic methods, such as the trapezoidal rule, require a function evaluation on each quadrature node, and the number of nodes grows exponentially as the dimension $d$ increases. Therefore, directly using these methods can be prohibitively expensive. Alternatively, randomized approaches, such as Monte Carlo (MC) integration,
may be used to compute the integrals in (\ref{eqn:aprox}). The MC-based method has been introduced in 
\cite{osher2023hamilton,heaton2024global,tibshirani2024laplace},
which may suffer from high variance and underflow errors in practice \cite{heaton2024global, beck2018underflow}.

In this section, we consider a new approach that is based on the usage of a \emph{tensor train} (TT) approximation algorithm 
\cite{oseledets2010tt, cross_error},
and analyze the associated error in estimating the proximal operator. 
By exploiting the Sobolev smoothness of the integrands, the TT-based method circumvents the curse of dimensionality, thereby improving the estimation accuracy.
Specifically, we first compute a low-rank TT approximation of the function
\begin{equation}\label{eqn:psi}
\psi := \exp\left(-\frac{f}{\delta}\right),
\end{equation}
and then use a quadrature rule to estimate (\ref{eqn:aprox}).
For computational purposes involving the TT approximation, we restrict the definition of $f$ 
on a bounded domain $\Omega\subset\Re^d$.

\subsection{Tensor train algorithms}\label{sec:tt}
In this subsection, we provide a brief review of TT algorithms and the associated error bound.
As the goal is to estimate the integrals in (\ref{eqn:aprox}) using TT approximation, we consider a $d$-dimensional mesh grid 
\[
\mathcal Z := \Set{z_1^{(i_1)}}_{i_1=1}^{n_1}\times \cdots\times\Set{z_d^{(i_d)}}_{i_d=1}^{n_d}\,,
\]
with each node  given by $\left(z_1^{(i_1)},\cdots,z_d^{(i_d)}\right)\in\Omega$. 
The discretization of a function $\psi$ on $\Z$, denoted by
$\psi_{\Zd}$, can be viewed as
a tensor of dimension $n_1\times\cdots \times n_d$, with entries given by the values of $\psi$ at node points.

A TT approximation of $\psi_{\Zd}$, denoted by $\psi_\TT$, is given by
\begin{align}\label{eqn:tt}
&\psi_\TT(i_1,\cdots,i_d)\\
:= &\sum_{\alpha_1=1}^{r_1} \cdots\sum_{\alpha_{d-1}=1}^{r_{d-1}} G_1(r_0,i_1, \alpha_1)G_2(\alpha_1, i_2, \alpha_2)\cdots G_d(\alpha_{d-1}, i_d\,, r_d)\,,\notag
\end{align}
where $r_0=r_d=1$ and $G_1,\cdots,G_d$ are called the \emph{cores}, with each $G_j$ of dimension $r_{j-1}\times n_j \times r_j$.
When the approximation is exact, the TT decomposition of a tensor is a generalization of the singular value decomposition (SVD) 
of a matrix and the cores are analogues to singular vectors.
The separable structure of $\psi_{\TT}$ reduces the computational cost of numerical integration from $\bigO(n^d)$ to $\bigO(dnr^2)$, where $n=\max\{n_1,\cdots,n_d\}$ and $r=\max\{r_1,\cdots,r_d\}$.

We include a standard result on the TT approximation error below.
\begin{theorem}
\label{Thm_TT_accuracy}
\cite[Thm. 2.2]{oseledets2010tt} For a tensor $\psi_{\Zd}$, define the \emph{unfolding matrices}
\begin{equation}\label{eqn:unfolding}
A_j:= \psi_{\Zd}(i_1,\cdots,i_j; i_{j+1},\cdots,i_d)\,,\quad j=1,\cdots,d-1\,,
\end{equation}
where the first $j$ indices enumerate the rows of $A_j$ and the last $d-j$ indices enumerate the columns.
There exists a tensor train approximation $\psi_\TT$ with ranks $\Set{r_j}_{j=1}^{d-1}$ such that
$\|\psi_{\Zd}-\psi_{\TT}\|_F \leq \sqrt{\sum_{j=1}^{d-1}\epsilon_j^2}$,
where 
$\epsilon_j = \min_{\textrm{rank} (B) \leq r_j}\|A_j-B\|_F$ for each $j$.
\end{theorem}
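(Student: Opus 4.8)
The plan is to prove this constructively via the TT-SVD algorithm, which processes the unfolding matrices from left to right and applies a truncated SVD at each stage. First I would set up the recursion. Reshape $\psi_{\Zd}$ into the first unfolding $A_1$ of size $n_1\times(n_2\cdots n_d)$ and compute its truncated SVD retaining the top $r_1$ singular values, writing $A_1\approx U_1 W_1$ with $U_1\in\Re^{n_1\times r_1}$ having orthonormal columns and $W_1=U_1^\top A_1$. Declare the first core $G_1=U_1$. By the Eckart--Young theorem the residual satisfies $\norm{A_1-U_1W_1}_F=\epsilon_1$, and crucially $U_1^\top(A_1-U_1W_1)=0$. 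Reshape $W_1$ into a tensor of dimension $r_1\times n_2\times\cdots\times n_d$ and recurse: at step $j$ merge the carried rank index and $i_j$ to form the rows of the current unfolding, truncate to rank $r_j$, extract the core $G_j$, and carry the remainder forward. After $d-1$ steps the final remainder becomes $G_d$, yielding $\psi_\TT$ in the form \eqref{eqn:tt}.

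Next I would bound the total error by a telescoping decomposition. Let $\hat\psi^{(0)}=\psi_{\Zd}$ and let $\hat\psi^{(j)}$ be the tensor obtained after truncating the first $j$ unfoldings, so that $\hat\psi^{(d-1)}=\psi_\TT$. Then
\[
\psi_{\Zd}-\psi_\TT=\sum_{j=1}^{d-1}\bigl(\hat\psi^{(j-1)}-\hat\psi^{(j)}\bigr).
\]
The heart of the argument is that these successive increments are pairwise orthogonal in the Frobenius inner product: the increment $\hat\psi^{(j-1)}-\hat\psi^{(j)}$ lies in the range of the projector onto the orthogonal complement of the retained left-singular subspace at step $j$, whereas every later increment only modifies the retained part, which is preserved by the orthonormal factors $G_1,\dots,G_j$ and is orthogonal to that complement. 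The Pythagorean identity then gives
\[
\norm{\psi_{\Zd}-\psi_\TT}_F^2=\sum_{j=1}^{d-1}\norm{\hat\psi^{(j-1)}-\hat\psi^{(j)}}_F^2.
\]

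Finally I would bound each term. The quantity $\norm{\hat\psi^{(j-1)}-\hat\psi^{(j)}}_F$ equals the rank-$r_j$ truncation error of the $j$-th unfolding of the carried tensor $\hat\psi^{(j-1)}$, which by Eckart--Young is the sum of squares of its discarded singular values. It remains to show this is at most $\epsilon_j$, the best rank-$r_j$ error of the \emph{original} unfolding $A_j$. This holds because the $j$-th unfolding of $\hat\psi^{(j-1)}$ is obtained from $A_j$ by left-multiplication with the accumulated orthogonal projections from the previous cores; a monotonicity argument for singular values under such projections shows its tail singular values are dominated by those of $A_j$. Combining the three pieces yields $\norm{\psi_{\Zd}-\psi_\TT}_F\le\sqrt{\sum_{j=1}^{d-1}\epsilon_j^2}$. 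The main obstacle is precisely establishing the orthogonality of the successive truncation increments together with the singular-value domination $\norm{\hat\psi^{(j-1)}-\hat\psi^{(j)}}_F\le\epsilon_j$; both require tracking how the orthonormal left factors from earlier SVD steps interact with the reshaping and index-merging of later unfoldings, which is the delicate bookkeeping at the crux of the proof.
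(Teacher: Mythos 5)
The paper does not prove this statement---it is quoted verbatim as a cited result from Oseledets's TT paper---so there is no in-paper proof to compare against. Your reconstruction is the standard TT-SVD argument from that reference (sequential truncated SVDs, Pythagorean orthogonality of the successive truncation increments via the orthonormal left factors, and domination of the carried tensor's truncation errors by the $\epsilon_j$ of the original unfoldings since orthogonal projection cannot increase the best rank-$r_j$ distance), and it is correct.
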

The theorem above implies that the error of the TT approximation can be made sufficiently small by choosing an appropriate rank. In particular, functions possessing certain Sobolev smoothness and underlying low-rank structures (e.g., Figure~\ref{fig:1d}) can be approximated with a relatively low TT rank \cite{Thm_TT_accuracy}; see also Table \ref{tab:tt_comparison}.

To efficiently construct a TT approximation $\psi_{\TT}$, we consider the randomized
TT cross algorithm \cite{oseledets2010tt, savostyanov2014quasioptimality}, which is based on the cross approximation of matrices.
Given a matrix $A$, a rank-r \emph{cross approximation} of $A$ is given by
$$A \approx A(:,J) A(I,J)^{-1} A(I,:)\,,$$
where $J$ is a subset of column indices of $A$ and $I$ is a subset of row indices, with $|I|=|J|=r$.
The index sets $I$ and $J$ are selected based on the \emph{maximum volume principle}, i.e., 
selecting the submatrix $A(I,J)$ that has the largest possible absolute value of the determinant.
The TT cross algorithm iteratively samples random row or column multi-indices and updates the tensor cores by performing cross approximation on sampled submatrices of unfolding matrices.
Details of the algorithm are summarized in Algorithm~\ref{alg:TTcross}, including two subroutines \texttt{TT-Cross-Right-To-Left-Sweep}
and \texttt{TT-Cross-Left-To-Right-Sweep}.
The subroutine \texttt{TT-Cross-Right-To-Left-Sweep} performs cross approximation on submatrices of the unfolding matrices $A_j$ given 
in \eqref{eqn:unfolding} from
$j=d-1$ to $j=1$, and \texttt{TT-Cross-Right-To-Left-Sweep} performs cross approximation on submatrices of $A_j$ from
$j=1$ to $j=d-1$.
The per-iteration cost of the TT cross algorithm is roughly $\bigO(dr^3)$ flops and $\bigO(dr^2)$ function evaluations. 
The algorithm obtains a TT approximation by evaluating only a small number of entries from the original tensor, 
without ever storing the full tensor, thus substantially reducing computational and storage costs.

\begin{algorithm}[H]
\caption{TT Cross Algorithm \cite{oseledets2010tt}}
\label{alg:TTcross}
\begin{algorithmic}[1]
    \STATE \textbf{Input:} Black-box tensor $\psi_{\Zd}\in \mathbb{R}^{n_1\times \cdots \times n_d}$, tolerance $\tau_{stop}>0$
    \STATE \textbf{Output:} $\psi_{\TT}$ with cores $G_1,\cdots,G_d$
    \STATE Randomly choose sets of column multi-indices $J_1,\cdots,J_{d-1}$
    \STATE $G_1,\cdots,G_d,I_1,\cdots,I_{d-1} \gets \text{TT-Cross-Left-To-Right-Sweep}(\psi_{\Zd}, J_1,\cdots,J_{d-1})$
    \STATE $H_1,\cdots,H_d \gets 0$
    \WHILE{$\|H_1\cdots H_d - G_1\cdots G_d\|_F < \tau_{stop} \|G_1\cdots G_d\|$}
        \STATE $\widehat{I}_1,\cdots,\widehat{I}_{d-1} \gets I_1,\cdots,I_{d-1}$ extended with random row multi-indices
        \STATE $H_1,\cdots,H_d,J_1,\cdots,J_{d-1} \gets \text{TT-Cross-Right-To-Left-Sweep}(\psi_{\Zd}, \widehat{I}_1,\cdots,\widehat{I}_{d-1})$
        \STATE $\widehat{J}_1,\cdots,\widehat{J}_{d-1} \gets J_1,\cdots,J_{d-1}$ extended with random column multi-indices
        \STATE $G_1,\cdots,G_d,I_1,\cdots,I_{d-1} \gets \text{TT-Cross-Left-To-Right-Sweep}(\psi_{\Zd}, \widehat{J}_1,\cdots,\widehat{J}_{d-1})$
    \ENDWHILE
\end{algorithmic}
\end{algorithm}


If two tensors are both represented in the TT-format \eqref{eqn:tt}, the Hadamard product can be performed directly
using such representation, which is useful in designing an efficient TT-based IPP algorithm in Section~\ref{sec:ttIPP} below.
When reducing the parameter $\delta$ to $\delta/2$ in \eqref{eqn:aprox},  we simply need to approximate $\psi^2$, whose TT approximation can be obtained 
as the Hadamrd product of $\psi_{\TT}$ and itself.
As derived in \cite{oseledets2011tensor}, the Hadamard product of the two tensors can be written explicitly as 
\[
\psi_{\TT}\circ \psi{\TT} = (G_1\otimes G_1)\cdot(G_2\otimes G_2)\cdots (G_d\otimes G_d)\,,
\]
where $\otimes$ denotes the Kronecker product.
Note that the resulting tensor can be written out explicitly in this manner and does not require any extra function evaluations. 
Moreover, as performing the Hadamard product leads to an increase in the ranks, a rounding procedure, \emph{TT-rounding}, introduced in 
\cite{oseledets2011tensor} can be employed to reduce the ranks while preserving the accuracy of the TT approximation within a specified tolerance. 
The TT-rounding procedure involves orthogonalization and performing truncated SVDs on unfolded tensor cores, with a computational complexity of 
$\bigO(dnr^3).$

The estimation of the proximal operator in \eqref{eqn:aprox} involves the integrand
$\tilde \psi(z):= \psi(z)\eta(z)$, where $\eta(z):= \exp{\left(-\norm{z-x}^2/(2t\delta)\right)}$ for a fixed $x$.
The discretization of the function $\eta(z)$ can be easily represented in a rank-$1$ TT format due to its separable structure, with cores given by
\[
u_j := \left[\exp\left(-\frac{1}{2t\delta}\|z_{j,1}-x_j\|^2 \right),\cdots,\exp\left(-\frac{1}{2t\delta}\|z_{j,n_j}-x_j\|^2 \right) \right]^T
\]
for $j=1,\cdots,d$, where $\{z_{j,k}\}_{k=1}^{n_j}$ are nodal points of $\Z$ along the $j$-th dimension.  
The integrals in \eqref{eqn:aprox} can then be estimated using a quadrature rule.
Let $\Set{w_{j,k}}_{k=1}^{n_j}$ be the quadrature weights associated with nodal points along the $j$-th dimension.
The denominator in \eqref{eqn:aprox} is approximated by a discrete sum
\begin{align}\label{eqn:tt_integral}
&\int_\Omega\exp\left(-\frac{\phi(z)}{\delta}\right)dz = \int_{\Omega}\exp\left(-\frac{f(z)}{\delta}\right)\exp\left(-\frac{\|z-x\|_2^2}{2t\delta}\right)dz \notag\\
\approx& \prod_{j=1}^{d}\left(\sum_{i_j=1}^{n_j}L_j(i_j)\right),\text{ where } L_j(i_j) = G_j(i_j)u_j(i_j)w_{j,i_j}\,,
\end{align}
with $G_j(i_j)=G_j(:,i_j,:)$ is of dimension $r_{j-1}\times r_j$ for each $j$ where $G_j$ is the core for the TT approximation for $\exp(-f/\delta)$ as defined in \eqref{eqn:tt}. 
Similarly, the $j$-th component of the numerator in \eqref{eqn:aprox} is approximated by
\begin{align}\label{eqn:tt_zintegral}
&\left[\int_\Omega z\exp\left(-\frac{\phi(z)}{\delta}\right)dz\right]_j \\
\approx& \left(\sum_{i_1=1}^{n_1}L_1(i_1)\right) \cdots  \left( \sum_{i_j=1}^{n_j} z_{j,i_j}L_j(i_j) \right)\cdots \left(\sum_{i_d=1}^{n_d}L_d(i_d)\right)\notag\,.
\end{align}

\subsection{Error analysis}\label{sec:error}
In this subsection, we derive error bounds for estimating the proximal operator based on (\ref{eqn:aprox})
using the TT approximation.
For simplicity, we consider the case where $\Omega=[0,1]^d$, and assume that $n_1=\cdots =n_d=n$ with the mesh size $h=1/n$.
The results can be easily extended to a general bounded domain $\Omega\subset\Re^d$.
The error is comprised of two parts: the error in the TT approximation and the error in the numerical integration.

As discussed in Section~\ref{sec:tt}, the error in the TT approximation can be made arbitrarily small by allowing sufficiently large ranks. In particular, for functions possessing certain Sobolev smoothness and underlying low-rank structures (as illustrated in Figure~\ref{fig:1d}), a relatively low TT rank is sufficient. To analyze the error in each component of the numerical computation, we assume that  
\begin{equation}\label{eqn:epsilonTT}
 \|\psi_{\TT}-\psi_{\mathcal{\Zd}}\|_F\leq {\epsilon_{\TT}}\,,
\end{equation}
where $\psi_\Zd$ is the discretization of $\psi$ on the mesh grid $\mathcal{Z}$.
The value of $\epsilon_{\TT}$ depends on the choice of the termination tolerance $\tau_{stop}$ in Algorithm~\ref{alg:TTcross}.
Detailed error analysis
on TT cross approximation can be found in \cite{cross_error, oseledets2010tt,Thm_TT_accuracy}. 

Now we consider the error in applying a quadrature rule on the TT approximations of the integrands in (\ref{eqn:aprox}).
Let $\left(\int \exp\left(-\phi(z)/\delta\right) \, dz\right)_{\TT}$ and\\ $\left(\int z \exp\left(-\phi(z)/\delta\right) \, dz\right)_{\TT}$ denote the TT approximations of the integrals \\$\int \exp\left(-\phi(z)/\delta\right) \, dz$ (see \eqref{eqn:tt_integral}) and $\int z \exp\left(-\phi(z)/\delta\right) \, dz$ (see \eqref{eqn:tt_zintegral}), respectively. 
Then the TT approximation of the proximal operator is given by
\begin{equation}\label{eq:x_TT}
\prox_{tf}(x)\approx \frac{\left(\int_\Omega z\exp\left(-\phi(z)/\delta\right)dz\right)_{\TT}}
{\left(\int_\Omega\exp\left(-\phi(z)/\delta\right)dz\right)_{\TT}} :=(\prox^\delta_{tf}(x))_{\TT}\, .
\end{equation}

First, we consider the case where $f\in C^2(\Omega)$. When the trapezoidal rule is used, we have the following standard result:
\begin{align}\label{eq:int_error1}
&\norm{\int_\Omega z\exp\left(-\frac{\phi(z)}{\delta}\right)dz-\left(\int_\Omega z\exp\left(-\frac{\phi(z)}{\delta}\right)dz\right)_{\Zd}}\\
\le &\frac{d h^2}{12}\max_{z,k,l}
\norm{\frac{\partial^2 \left(z\exp\left(-\phi(z)/\delta\right)\right)}{\partial z_k \partial z_{l}}}\,,\notag
\end{align}
where $(\cdot)_{\Zd}$ denotes the numerical quadrature over the mesh $\Z$ for estimating the integral.
Similarly,
\begin{align}\label{eq:int_error2}
&\norm{\int_\Omega \exp\left(-\frac{\phi(z)}{\delta}\right)dz-\left(\int_\Omega \exp\left(-\frac{\phi(z)}{\delta}\right)dz\right)_{\Zd}}\\
\le &\frac{d h^2}{12}\max_{z,k,l}
\norm{\frac{\partial^2 \left(\exp\left(-\phi(z)/\delta\right)\right)}{\partial z_k \partial z_{l}}}\,.\notag
\end{align}
It follows that, for $\delta\in (0,1)$, the errors in \eqref{eq:int_error1} and \eqref{eq:int_error2} are bounded by
$\frac{c_1 dh^2}{12\delta^2}\exp\left(-\frac{\phi_{\min}}{\delta}\right)$,
where $c_1$ is a constant that depends on the magnitude of the first and second-order partial derivatives of $\phi$ on $\Omega$, and 
$$\phi_{\min}=\min_{z\in\Omega}\phi(z)\ge f(x^*)\,.$$

Thus, by the Cauchy-Schwarz inequality,
\begin{align*}
&\norm{\int_\Omega z\exp\left(-\frac{\phi(z)}{\delta}\right)dz-\left(\int_\Omega z\exp\left(-\frac{\phi(z)}{\delta}\right)dz\right)_{\TT}}\\
\le & \norm{\int_\Omega z\exp\left(-\frac{\phi(z)}{\delta}\right)dz-\left(\int_\Omega z\exp\left(-\frac{\phi(z)}{\delta}\right)dz\right)_{\Z}}\\
& +\norm{\left(\int_\Omega z\exp\left(-\frac{\phi(z)}{\delta}\right)dz\right)_{\Zd}-\left(\int_\Omega z\exp\left(-\frac{\phi(z)}{\delta}\right)dz\right)_{\TT}}\\
\le & \frac{c_1 dh^2}{12\delta^2}\exp\left(-\frac{\phi_{\min}}{\delta}\right)+ \norm{\psi_{\Zd}-\psi_{\TT}}_F\left(\sum_{j=1}^{n^d}\left(\norm{z^{(j)}}w^{(j)}\right)^2\right)^{1/2}\\
\le & \frac{c_1 dh^2}{12\delta^2}\exp\left(-\frac{\phi_{\min}}{\delta}\right)+ c_2\epsilon_{\TT} h^{d/2}\,,
\end{align*}
where $\Set{z^{(j)}}$ and $\Set{w^{(j)}}$ denote the quadrature nodes and weights,
$c_2$ is the approximation to integral $\int_\Omega \norm{x}^2dx\le 1$ which is a constant.
Similarly,
\begin{align*}
&\norm{\int_\Omega \exp\left(-\frac{\phi(z)}{\delta}\right)dz-\left(\int_\Omega \exp\left(-\frac{\phi(z)}{\delta}\right)dz\right)_{\TT}}\\
\le &\frac{c_1 dh^2}{12\delta^2}\exp\left(-\frac{\phi_{\min}}{\delta}\right)+ \tilde c_2\epsilon_{\TT} h^{d/2}\,,
\end{align*}
where $\tilde c_2\approx |\Omega|=1.$


The TT estimation error of the proximal operator satisfies
\begin{align}
 \label{eqn:discrete_1}
   &  \norm{\prox_{tf}(x)-\left(\prox_{tf}^\delta(x) \right)_{\TT}}\\
\leq &\norm{\prox_{tf}(x)-\prox_{tf}^\delta(x)}+\norm{\prox_{tf}^\delta(x)-\left(\prox_{tf}^\delta(x) \right)_{\TT}}\,.\notag
\end{align}
By Theorem~\ref{thm_C2}, for $0<\delta\ll 1$, the first term above is $\bigO(\delta)$, i.e.,
\[
\norm{\prox_{tf}(x)-\prox_{tf}^\delta(x)}\le C_1\delta
\]
for some constant $C_1$ that depends on the homeomorphism $T$ in Lemma~\ref{lem:morse}.
To estimate the second error term, let
\[
\beta_1 = \int_\Omega z\exp\left(-\frac{\phi(z)}{\delta}\right)dz\,, \quad \beta_2 =  \int_\Omega \exp\left(-\frac{\phi(z)}{\delta}\right)dz\,,
\]
and let $\tilde\beta_1$, $\tilde\beta_2$ as their TT approximations respectively:
\[
\tilde\beta_1 = \left(\int_\Omega z\exp\left(-\frac{\phi(z)}{\delta}\right)dz\right)_{\TT},\quad 
\tilde\beta_2 = \left(\int_\Omega \exp\left(-\frac{\phi(z)}{\delta}\right)dz\right)_{\TT}.
\]
The magnitude of the second term in \eqref{eqn:discrete_1}  is then bounded by  
\begin{align*}
&\norm{\frac{\beta_1}{\beta_2} - \frac{\tilde{\beta}_1}{\tilde{\beta}_2}} \leq 
\norm{\frac{\tilde{\beta}_1}{\tilde{\beta}_2} - \frac{\tilde{\beta}_1}{\beta_2}} + \norm{ \frac{\tilde{\beta}_1}{\beta_2} - \frac{\beta_1}{\beta_2}}
\leq  \norm{\frac{\tilde{\beta}_1}{\tilde{\beta}_2}} \left|\frac{\beta_2-\tilde{\beta}_2}{\beta_2}\right| + \norm{\frac{\beta_1-\tilde{\beta}_1}{\beta_2}} \\
&\le\left(\norm{\frac{\tilde{\beta}_1}{\tilde{\beta}_2}}+1\right)
\left(\frac{c_1 dh^2}{12\delta^2\int_\Omega \exp\left(-\tilde \phi(z)/\delta\right)dz}
+\frac{\max\{\tilde c_2,c_2\}\epsilon_{\TT} h^{d/2}\exp\left(\frac{\phi_{\min}}{\delta}\right)}{\int_\Omega \exp\left(-\tilde \phi(z)/\delta\right)dz}\right),
\end{align*}
where $\tilde \phi(z) = \phi(z)-\phi_{\min}.$
As derived in the proof of Theorem~\ref{thm_C2},
\[
\int_{\Omega} \exp{\left(-\frac{\tilde \phi(z)}{\delta}\right)}dz = \frac{(2\pi\delta)^{d/2}}{\sqrt{\det(\Hess\tilde \phi(z^*))}}+\bigO(\delta^{d/2+1})
\]
for small $\delta>0.$ Also notice
\(
 \norm{\tilde{\beta}_1/\tilde{\beta}_2} \le \max_{j} \norm{x^{(j)}}\le 1\,.
\)
It follows that, for $0<\delta\ll 1,$
\[
\norm{\frac{\beta_1}{\beta_2} - \frac{\tilde{\beta}_1}{\tilde{\beta}_2}} \le \frac{C_2 dh^2}{\delta^{2+d/2}}+ 
\frac{C_3\epsilon_{\TT} h^{d/2}}{\delta^{d/2}}\exp\left(\frac{\phi_{\min}}{\delta}\right)\,,
\]
where $C_2$ and $C_3$ are constants that depend on the magnitude of the first and second-order derivatives of $f$ on $\Omega$.

The results above are summarized in the following proposition.
\begin{proposition}
\label{prop:C2}
Assume that $f\in C^2(\Omega)$ and that $z^*:=\prox_{tf}(x)$ is the unique nondegenerate global minimizer of $\phi$.
For $0<\delta\ll 1$, the error in estimating $\prox_{tf}(x)$ using TT approximation and Trapezoidal rule is given by
\begin{equation}\label{eq:int_tt_error}
\norm{\prox_{tf}(x)-\left(\prox_{tf}^\delta(x) \right)_{\TT}}\le C_1\delta+
\frac{C_2 dh^2}{\delta^{2+d/2}}+ \frac{C_3\epsilon_{\TT} h^{d/2}}{\delta^{d/2}}\exp\left(\frac{\phi_{\min}}{\delta}\right),
\end{equation}
where $C_1, C_2, C_3$ are constants that are independent of the choices of $\Set{\delta,\epsilon_{\TT},h}$.
In particular, as $\delta\to 0^+$, if
\begin{equation}\label{eq:hepsilon}
h = \bigO(\delta^{\max\{d,2\}/4+3/2}) \quad \textrm{ and } \quad \epsilon_{\TT}\le \exp\left(-\frac{\phi_{\min}}{\delta}\right)
\end{equation}
then
$\norm{\prox_{tf}(x)-\left(\prox_{tf}^\delta(x) \right)_{\TT}} = \bigO(\delta).$
\end{proposition}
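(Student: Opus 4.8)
The plan is to assemble \eqref{eq:int_tt_error} from the two error sources already separated by the triangle inequality \eqref{eqn:discrete_1}, and then to deduce the ``in particular'' claim by substituting the scalings \eqref{eq:hepsilon} and counting powers of $\delta$. The first term of \eqref{eqn:discrete_1}, $\norm{\prox_{tf}(x)-\prox_{tf}^\delta(x)}$, measures the error of replacing the proximal point by the Gibbs mean; since $z^*$ is the unique nondegenerate global minimizer of $\phi$ and $f\in C^2(\Omega)$, Theorem~\ref{thm_C2} applies directly and bounds it by $C_1\delta$. The remaining work is therefore the discretization-plus-TT term $\norm{\prox_{tf}^\delta(x)-(\prox_{tf}^\delta(x))_{\TT}}$.

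For that term I would bound the numerator $\beta_1$ and denominator $\beta_2$ of \eqref{eqn:aprox} separately, splitting each error into a trapezoidal-quadrature part and a TT part. Because $f\in C^2(\Omega)$, the integrand $\exp(-\phi/\delta)$ is $C^2$, so the classical trapezoidal estimate applies; the only care needed is in the $\delta$-dependence, since differentiating the exponential twice produces factors of $\delta^{-1}$ and the integrand peaks at size $\exp(-\phi_{\min}/\delta)$, giving the $\delta^{-2}\exp(-\phi_{\min}/\delta)$ scaling. The TT part is handled by Cauchy--Schwarz, which turns the Frobenius bound $\norm{\psi_{\Zd}-\psi_{\TT}}_F\le\epsilon_{\TT}$ into an integration error through the $\ell^2$-norm of the quadrature weights, contributing the factor $h^{d/2}$. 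I would then convert the numerator and denominator errors into a bound on the quotient $\beta_1/\beta_2$ versus $\tilde\beta_1/\tilde\beta_2$ via the elementary ratio-perturbation inequality, using the crude bound $\norm{\tilde\beta_1/\tilde\beta_2}\le 1$ (the nodes lie in $\Omega=[0,1]^d$) together with a sharp lower bound on $\beta_2$. This last ingredient is the Laplace asymptotic borrowed from the proof of Theorem~\ref{thm_C2}, $\int_\Omega\exp(-\tilde\phi/\delta)\,dz=(2\pi\delta)^{d/2}/\sqrt{\det\Hess\tilde\phi(z^*)}+\bigO(\delta^{d/2+1})$, which fixes $\beta_2$ at order $\delta^{d/2}\exp(-\phi_{\min}/\delta)$ and produces the second and third terms of \eqref{eq:int_tt_error}.

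To finish, I would substitute the scalings \eqref{eq:hepsilon}. The choice $\epsilon_{\TT}\le\exp(-\phi_{\min}/\delta)$ cancels the lone factor $\exp(\phi_{\min}/\delta)$ in the third term, leaving a pure power of $\delta$, and the exponent of $h$ is then calibrated so that both surviving terms are $\bigO(\delta)$: the quadrature term $dh^2/\delta^{2+d/2}$ is $\bigO(\delta)$ as soon as $h=\bigO(\delta^{3/2+d/4})$, while the TT term $h^{d/2}/\delta^{d/2}$ is $\bigO(\delta)$ as soon as $h=\bigO(\delta^{1+2/d})$. For $d\ge 2$ the quadrature constraint is the binding one, which is exactly the exponent $\max\{d,2\}/4+3/2$ appearing in \eqref{eq:hepsilon}; combining the three $\bigO(\delta)$ contributions then yields the claim.

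The main obstacle I anticipate is the quotient step, and specifically the asymmetric role of the exponentially small denominator. The quadrature error carries a built-in $\exp(-\phi_{\min}/\delta)$ inherited from the peak size of $\exp(-\phi/\delta)$, whereas the TT error $\epsilon_{\TT}h^{d/2}$ does not; after dividing by $\beta_2\sim\delta^{d/2}\exp(-\phi_{\min}/\delta)$, the quadrature term's exponential cancels cleanly but the TT term instead acquires the factor $\exp(\phi_{\min}/\delta)$. Recognizing this asymmetry is what motivates the threshold on $\epsilon_{\TT}$, engineered precisely to absorb the surviving exponential, and controlling $\beta_2$ by the sharp $\delta^{d/2}$ Laplace asymptotic rather than a crude positive lower bound is essential to keeping the $\delta$-powers exact. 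Tracking these powers through both error terms while simultaneously meeting the two competing constraints on $h$ is the delicate bookkeeping at the core of the argument.
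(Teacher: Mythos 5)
Your proposal is correct and follows essentially the same route as the paper: the triangle-inequality split \eqref{eqn:discrete_1} with Theorem~\ref{thm_C2} handling the Gibbs-mean bias, separate trapezoidal and Cauchy--Schwarz/Frobenius bounds on numerator and denominator, the ratio-perturbation inequality with $\norm{\tilde\beta_1/\tilde\beta_2}\le 1$ and the Laplace asymptotic $\beta_2\sim\delta^{d/2}\exp(-\phi_{\min}/\delta)$, and the same power counting for \eqref{eq:hepsilon}. Your observation about the asymmetric exponential factors (and why the quadrature constraint is the binding one for $d\ge 2$) matches the structure of the paper's derivation exactly.
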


For high-dimensional problems, the bound on the mesh size $h$ given in (\ref{eq:hepsilon}) is unrealistic. 
This restriction is due to the theoretical error bounds on the numerical integration given in \eqref{eq:int_error1}-\eqref{eq:int_error2}.
Alternatively, we consider the case where $f$ lies in a \emph{Sobolev space} given by
\[
H^s(\Omega):=\Set{g\in L^2(\Omega):\norm{g}_s^2:=\left(\sum_{\tau=0}^s\norm{g^{(\tau)}}_{L^2(\Omega)}^2\right)<\infty}\,.
\]
When $s \ge 2$, \cite[Theorem 4.5 ]{Sobolev_integral} implies 
\begin{equation}
\label{eqn:error_Hs}
    \left\|\int_\Omega z\exp\left(-\frac{\phi(z)}{\delta}\right)dz-
    \left(\int_\Omega z\exp\left(-\frac{\phi(z)}{\delta}\right)dz\right)_{\Z}\right\|\le C\frac{d(\log n)^{\frac{s}{2}+\frac{1}{4}}}{n^s}
\end{equation}
where $C$ is a constant dependent on the Sobolev norm of the integrand.
Following similar arguments as for the previous case, the following result can be derived.
\begin{corollary}\label{cor:TT_error}
  Assume that $f\in H^{s}(\Omega)$ for $s\ge 2$ and that $z^*:=\prox_{tf}(x)$ is the unique nondegenerate global minimizer of $\phi$. If
\begin{equation}\label{eqn:hepsilon2}
      h = \mathcal{O}\left(\delta^{\frac{d+2}{2s}+1}\right)~\textrm{ and }~ \epsilon_{\TT}\le \exp\left(-\frac{\phi_{\min}}{\delta}\right).
\end{equation}
Then the error in in estimating $\prox_{tf}(x)$ using TT approximation and Trapezoidal rule satisfies
\begin{equation*}
\norm{\prox_{tf}(x)-\left(\prox_{tf}^\delta(x) \right)_{\TT}} = \tilde\bigO(\delta)\,,
\end{equation*}
where $\tilde\bigO(\cdot)$ omits polylogarithmic factors.
\end{corollary}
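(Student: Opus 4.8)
The plan is to reproduce, almost verbatim, the chain of estimates that established Proposition~\ref{prop:C2}, changing only the quadrature error bound. First I would use the same triangle-inequality split as in \eqref{eqn:discrete_1}, writing the total error as $\norm{\prox_{tf}(x)-\prox_{tf}^\delta(x)}+\norm{\prox_{tf}^\delta(x)-(\prox_{tf}^\delta(x))_{\TT}}$. The first term is $\bigO(\delta)$ directly from Theorem~\ref{thm_C2}, whose hypotheses ($p$-coercivity of $\phi$, a unique nondegenerate global minimizer $z^*$, and twice-continuous-differentiability of $\phi$ near $z^*$, the last being part of the nondegeneracy assumption) are all in force here; the global regularity $f\in H^s(\Omega)$ is needed only for the quadrature bound. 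Thus the entire content lies in estimating the second term.

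For that term I would keep the notation $\beta_1,\beta_2,\tilde\beta_1,\tilde\beta_2$ and the ratio estimate $\norm{\beta_1/\beta_2-\tilde\beta_1/\tilde\beta_2}\le(\norm{\tilde\beta_1/\tilde\beta_2}+1)(\abs{(\beta_2-\tilde\beta_2)/\beta_2}+\norm{(\beta_1-\tilde\beta_1)/\beta_2})$ used just before Proposition~\ref{prop:C2}. Each absolute integral error $\norm{\beta_i-\tilde\beta_i}$ then splits, via the same Cauchy--Schwarz argument and \eqref{eqn:epsilonTT}, into a numerical-quadrature part and a TT part of size $c\,\epsilon_{\TT}h^{d/2}$. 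The TT part is controlled exactly as in the $C^2$ case, so the single genuinely new estimate is the quadrature part, for which I would substitute the Sobolev bound \eqref{eqn:error_Hs} in place of the trapezoidal bounds \eqref{eq:int_error1}--\eqref{eq:int_error2}.

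The decisive step is tracking the $\delta$-dependence of \eqref{eqn:error_Hs}, whose constant $C$ is proportional to the $H^s$-norm of the integrand $z\exp(-\phi(z)/\delta)$. Since each of the up-to-$s$ derivatives of $\exp(-\phi/\delta)$ contributes a factor $\delta^{-1}$ (the worst term in the chain rule carries $s$ of them) while the amplitude stays $\bigO(\exp(-\phi_{\min}/\delta))$, this norm is $\bigO(\delta^{-s}\exp(-\phi_{\min}/\delta))$. Writing $n=1/h$ so that $n^{-s}=h^s$ and $\log n=\log(1/h)$, the absolute quadrature error is therefore $\bigO(d(\log(1/h))^{s/2+1/4}h^s\delta^{-s}\exp(-\phi_{\min}/\delta))$. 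Dividing by $\beta_2=\exp(-\phi_{\min}/\delta)\,[(2\pi\delta)^{d/2}/\sqrt{\det\Hess\tilde\phi(z^*)}+\bigO(\delta^{d/2+1})]$, whose Laplace asymptotics come from the proof of Theorem~\ref{thm_C2}, cancels the $\exp(-\phi_{\min}/\delta)$ factor and leaves a relative quadrature error of order $d(\log(1/h))^{s/2+1/4}h^s/\delta^{\,s+d/2}$. Together with the TT term (handled as in Proposition~\ref{prop:C2}, where $\epsilon_{\TT}\le\exp(-\phi_{\min}/\delta)$ cancels the $\exp(\phi_{\min}/\delta)$ blow-up and leaves a contribution $\bigO((h/\delta)^{d/2})$) and the $\bigO(\delta)$ Gibbs term, this yields a bound of the form $C_1\delta+C_2'\,d(\log(1/h))^{s/2+1/4}h^s/\delta^{s+d/2}+C_3(h/\delta)^{d/2}$.

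Finally I would substitute the prescribed scaling \eqref{eqn:hepsilon2}, $h=\bigO(\delta^{(d+2)/(2s)+1})$, under which $h^s/\delta^{s+d/2}=\bigO(\delta)$, while the polylogarithmic factor $(\log(1/h))^{s/2+1/4}=\bigO((\log(1/\delta))^{s/2+1/4})$ is precisely what $\tilde\bigO$ absorbs; choosing $\epsilon_{\TT}$ small enough (consistent with the stated inequality) makes the TT contribution $\bigO(\delta)$ as well, giving $\tilde\bigO(\delta)$ overall. I expect the main obstacle to be the $\delta$-dependence of the $H^s$-norm of the integrand: unlike the $C^2$ case, where only second derivatives enter and the $\delta^{-2}$ scaling is immediate, here one must verify uniformly across all derivatives up to order $s$ that no blow-up worse than $\delta^{-s}$ occurs and that the amplitude is genuinely of order $\exp(-\phi_{\min}/\delta)$, so that it cancels against $\beta_2$. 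Everything downstream is then the same bookkeeping as in Proposition~\ref{prop:C2}.
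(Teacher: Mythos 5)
Your proposal is correct and follows essentially the same route the paper intends: the paper only says "following similar arguments as for the previous case," i.e., repeat the derivation of Proposition~\ref{prop:C2} with the trapezoidal bounds \eqref{eq:int_error1}--\eqref{eq:int_error2} replaced by the Sobolev quadrature bound \eqref{eqn:error_Hs}, which is exactly your plan. Your explicit tracking of the $\delta^{-s}\exp(-\phi_{\min}/\delta)$ scaling of the $H^s$-norm of the integrand is the one step the paper leaves implicit, and it is precisely what makes the exponent $\frac{d+2}{2s}+1$ in \eqref{eqn:hepsilon2} yield $h^s/\delta^{s+d/2}=\bigO(\delta)$, so your bookkeeping confirms the stated scaling.
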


By \eqref{eq:hepsilon} and \eqref{eqn:hepsilon2}, if $\phi_{\min}\le 0$, it is sufficient to require the TT approximation error $\epsilon_{\TT}\le 1$ to guarantee the desired
accuracy in estimating the proximal operator. In practice, if we have an estimate of $\phi_{\min}$, i.e., $c\approx \phi_{\min}$ for some constant $c$, we 
may shift the original $\phi$ and estimate $\prox_{tf}(x)$ using the equivalent formula
\[
\prox_{tf}(x) = \frac{\int_{\Omega} z\exp\left(-\tilde \phi(z)/\delta\right)dz}{\int_{\Omega}\exp{(-\tilde \phi(z)/\delta)dz}}\,,
\]
where $\tilde \phi(z) = \phi(z)-c,$ with $\tilde \phi_{\min} = \phi_{\min}-c\approx 0.$ 
This shift is not mandatory but may enhance numerical stability.


\section{Two practical algorithms}\label{sec:IPP_alg}
\subsection{The TT-IPP algorithm}\label{sec:ttIPP}
In this section, we propose a practical IPP algorithm, TT-IPP, which utilizes the TT approximation 
to estimate the proximal operator 
\[
\prox_{tf}(x)\approx \left(\prox_{tf}^\delta(x)\right)_{\TT},
\]
and allows the parameter $\delta$ to decrease adaptively.
The TT estimate $\left(\prox_{tf}^\delta(x)\right)_{\TT}$ is computed as described in Section~\ref{sec:approx}.
Details of TT-IPP are summarized in Algorithm~\ref{alg:ttIPP}.
An initial TT approximation of the function $\psi:=\exp{\left(-f/\delta_0\right)}$ is computed first, and the parameter $\delta_k$
is decreased adaptively based on whether the current iterate achieves a sufficient function decrease as written in 
Line~\ref{line:if_decrease} of Algorithm~\ref{alg:ttIPP}. 
For iterations where $\delta_k$ is not reduced, the previous TT approximation is reused.
When $\delta_k$ is reduced by half, we compute the Hadamard product to efficiently update the TT approximation without 
the need of any additional function evaluation. 
According to the error analysis in Section~\ref{sec:error}, TT estimates of the proximal operators are accurate if the mesh size is small relative to the value of $\delta.$
When the mesh size $h_k$ is too large relative to the current parameter $\delta_k$, the
mesh size is reduced and the TT approximation is refined over the new mesh grid. When using a uniform mesh, previous function evaluations may be reused in refining the TT approximation in Line~\ref{line:refineTT} if function evaluations are expensive.
Additionally, according to Theorem \ref{thm:conv}, a reasonable initial guess of the global minimizer can be obtained by approximating the integrals in \eqref{eqn:mean_gibbs} if substituting $\phi$ with the original objective function $f$, i.e.
\begin{equation}\label{eq:initial}
x^0\approx \frac{\int_\Omega x\exp\left(-f(x)/\delta_0\right)dx}{\int_\Omega \exp{(-f(x)/\delta_0)dx}}\,.
\end{equation}
The TT-estimate of \eqref{eq:initial}, based on the initial TT approximation of $\exp{\left(-f/\delta_0\right)}$,
can serve as a warm start of TT-IPP at a cost equivalent to a single iteration of the algorithm.

\begin{algorithm}
\caption{Tensor-Train Inexact Proximal Point Method (TT-IPP)}\label{alg:ttIPP}
\begin{algorithmic}[1] 
    \STATE \textbf{Input:} $x^0\in\Re^d$, $0<\delta_0< 1$, $0<\eta_-<1<\eta_+$, $0<\theta_1\le\theta_2<1$, $\bar\epsilon>0$, 
    $0<\eta<1$, $T>0$, $0<\tau\le t_0\le T$, $h_0>0$, 
    $\gamma>1$, $C>0$, $m\ge 1$, $k_{\max}>0$, and $\epsilon_{stop}>0$
    \STATE $k \gets 0$
    \STATE Compute $\left(\exp\left(\frac{-f}{\delta_{0}}\right)\right)_{\TT}$, a TT approximation of $\exp\left(\frac{-f}{\delta_0}\right)$ 
    on a mesh grid of size $h_0$ by Algorithm~\ref{alg:TTcross}
    \WHILE{$k < k_{\max}$}
        \STATE $x^{k+1} \gets \left(\prox_{t_k f}^{\delta_k}(x^k)\right)_{\TT}$
        \IF{$k \ge m-1$ \textbf{and} $f(x^{k+1}) > \max\{f(x^k), f(x^{k-1}),\dots,f(x^{k-m+1})\} - \eta/k$} \label{line:if_decrease}
            \STATE $\delta_{k+1} \gets \delta_k / 2$
            \STATE $\left(\exp\left(\frac{-f}{\delta_{k+1}}\right)\right)_{\TT} \gets \left(\exp\left(\frac{-f}{\delta_{k}}\right)\right)_{\TT}$ 
            using Hadamard product
            \IF{$h_k > C \delta_k^\gamma$}
                \STATE $h_{k+1} \gets h_k / 2^{\lfloor \gamma \rfloor}$
                \STATE Update $\left(\exp\left(\frac{-f}{\delta_{k+1}}\right)\right)_{\TT}$ on the refined mesh by Algorithm~\ref{alg:TTcross} \label{line:refineTT}
            \ENDIF
        \ELSE
            \STATE $\delta_{k+1} \gets \delta_k$, $h_{k+1} \gets h_k$
            \STATE $\left(\exp\left(\frac{-f}{\delta_{k+1}}\right)\right)_{\TT} \gets \left(\exp\left(\frac{-f}{\delta_{k}}\right)\right)_{\TT}$
        \ENDIF
        \STATE Determine $t_{k+1}$ using Lines~\ref{line:qk}--\ref{line:tk} of Algorithm~\ref{alg:IPP}
        \STATE $k \gets k + 1$
        \IF{$\norm{x^{k+1} - x^k} < \epsilon_{stop}$}
            \STATE \textbf{Break}
        \ENDIF
    \ENDWHILE
    \STATE \textbf{Output:} last iterate $x^k$
\end{algorithmic}
\end{algorithm}

The convergence of TT-IPP is a corollary of the convergence of IPP methods proved in Theorem~\ref{thm:IPP}.
\begin{corollary}\label{cor:ttIPP}
For $f:\Omega\subset\Re^d\to\Re$, where $\Omega$ is a bounded domain,
suppose Assumptions~\ref{assum1} -- \ref{assum3} hold, and one of the conditions listed in Proposition~\ref{prop:single_prox} holds.
Let $\Set{x^k}_{k\ge 0}$ be the sequence of iterates generated by Algorithm~\ref{alg:ttIPP}. 
If $T>0$ and $\gamma>1$ chosen for Algorithm~\ref{alg:ttIPP}
are sufficiently large, and the TT approximation error $\epsilon_{\TT}$ in \eqref{eqn:epsilonTT} is sufficiently small,
then $\Set{x^k}_{k\ge 0}$ converges to $x^*$ as $k\to\infty.$
In particular, for $f\in H^s{(\Omega)}\cap C^2(\Omega)$, it is sufficient if $\gamma > (d+2)/(2s)+1$, and 
$\epsilon_{\TT}$ satisfies \eqref{eqn:hepsilon2} at each iteration.
\end{corollary}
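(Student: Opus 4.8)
The plan is to exhibit Algorithm~\ref{alg:ttIPP} as a special instance of the generic scheme in Algorithm~\ref{alg:IPP} and then invoke Theorem~\ref{thm:IPP}. In TT-IPP the new iterate is $x^{k+1}=(\prox^{\delta_k}_{t_k f}(x^k))_{\TT}$ with no damping, so it matches Algorithm~\ref{alg:IPP} with $\alpha_k\equiv 1$; the hypothesis $\alpha_{\min}>1-\eta_-$ is then automatic since $\alpha_{\min}=1$ and $\eta_->0$, and the update of $t_k$ is literally Lines~\ref{line:qk}--\ref{line:tk} of Algorithm~\ref{alg:IPP}. Setting $y^k:=x^{k+1}$ and $\hat x^k:=\prox_{t_k f}(x^k)$ --- which is single-valued because one of the conditions of Proposition~\ref{prop:single_prox} holds --- the whole statement reduces to (i) verifying the summable-error condition \eqref{eq:prox_error}, and (ii) upgrading the conclusion $f(x^k)\to f_{\min}$ of Theorem~\ref{thm:IPP} to iterate convergence $x^k\to x^*$.

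For the per-iteration error I would use Corollary~\ref{cor:TT_error}. The refinement rule in Algorithm~\ref{alg:ttIPP} (halve $h_k$ whenever $h_k>C\delta_k^\gamma$) maintains $h_k=\bigO(\delta_k^\gamma)$; choosing $\gamma>(d+2)/(2s)+1$ makes the mesh condition in \eqref{eqn:hepsilon2} hold, and the standing hypothesis that $\epsilon_{\TT}$ meets the bound in \eqref{eqn:hepsilon2} supplies the accuracy condition. Hence $\norm{y^k-\hat x^k}=\tilde\bigO(\delta_k)$ at every iteration, so verifying \eqref{eq:prox_error} amounts to showing $\sum_k \delta_k^2<\infty$ up to the harmless polylogarithmic factors hidden in $\tilde\bigO$.

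The first substantive step is to show $\delta_k\to 0$. Since $\delta_k$ is halved exactly at the iterations where the nonmonotone sufficient-decrease test in Line~\ref{line:if_decrease} fails, $\delta_k\to 0$ is equivalent to that test failing infinitely often. If instead the test eventually always succeeded, then for all large $k$ one would have $f(x^{k+1})\le \max\{f(x^k),\dots,f(x^{k-m+1})\}-\eta/k$; comparing windowed maxima over blocks of $m$ consecutive steps shows the windowed maximum would decrease by at least a harmonic amount $\eta\sum 1/k$, which diverges and would push $f$ below $f_{\min}$ --- impossible by Assumption~\ref{assum1}. Therefore the halvings occur infinitely often and $\delta_k\to 0$.

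The main obstacle is turning $\delta_k\to 0$ into the quantitative bound $\sum_k\delta_k^2<\infty$. Writing $n_j$ for the number of iterations spent at the level $\delta_k=\delta_0/2^{j}$, one has $\sum_k\delta_k^2=\delta_0^2\sum_j n_j 4^{-j}$, so I must control the plateau lengths $n_j$. Each plateau consists of consecutive successes of the decrease test terminated by one failure; reusing the block estimate above together with the boundedness of the continuous $f$ on the bounded domain $\Omega$ (so $f_{\max}-f_{\min}<\infty$) bounds the number of consecutive successes and hence $n_j$. It is precisely here that the ``sufficiently large $T,\gamma$'' and ``sufficiently small $\epsilon_{\TT}$'' hypotheses enter, since they guarantee that the inexact prox steps genuinely decrease $f$ only until the $\tilde\bigO(\delta_k)$ error floor is reached, after which the test must fail; the resulting geometric-type control of $n_j 4^{-j}$ (the polylogarithmic factors being absorbed) gives summability. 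Granting this, \eqref{eq:prox_error} holds and Theorem~\ref{thm:IPP} yields $f(x^k)\to f_{\min}$. Finally, since $\{x^k\}\subset\Omega$ is bounded, any limit point $\bar x$ satisfies $f(\bar x)=f_{\min}$ by continuity, and as $x^*$ is the unique global minimizer under the relevant case of Proposition~\ref{prop:single_prox}, every limit point equals $x^*$; hence $x^k\to x^*$.
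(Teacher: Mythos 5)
Your overall route is the same as the paper's: view Algorithm~\ref{alg:ttIPP} as Algorithm~\ref{alg:IPP} with $\alpha_k\equiv 1$, prove $\delta_k\to 0$ by the harmonic-sum contradiction on the nonmonotone decrease test, invoke Corollary~\ref{cor:TT_error} for the per-iteration error, and feed \eqref{eq:prox_error} into Theorem~\ref{thm:IPP}. Two of your additions are genuinely useful: the explicit check that $\alpha_{\min}=1>1-\eta_-$ is automatic, and the final compactness-plus-uniqueness step upgrading $f(x^k)\to f_{\min}$ to $x^k\to x^*$ (Theorem~\ref{thm:IPP} only yields function-value convergence; the paper passes over this, and note that uniqueness of $x^*$ is assumed in cases 2--3 of Proposition~\ref{prop:single_prox} but not in case 1, so the iterate-convergence conclusion really needs one of those cases).

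The gap is exactly where you flag it: converting the per-iteration bound $\norm{y^k-\hat x^k}=\tilde\bigO(\delta_k)$ into $\sum_k\delta_k^2<\infty$. Your plateau argument is left at ``granting this,'' and as sketched it does not close. Writing $D:=\sup_\Omega f-f_{\min}$, the harmonic budget only shows that a plateau beginning at iteration $k_j$ satisfies roughly $(\eta/m)\log\bigl((k_j+n_j)/k_j\bigr)\le D$, i.e.\ $n_j\le k_j\bigl(e^{mD/\eta}-1\bigr)$; hence $k_j$ may grow like $e^{jmD/\eta}$ and $\sum_j n_j 4^{-j}$ converges only when $e^{mD/\eta}<4$, which no stated hypothesis guarantees (indeed $\eta<1$ and $D$ can be large). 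The heuristic that the test ``must fail once the $\tilde\bigO(\delta_k)$ error floor is reached'' is also not a proof: a success only requires beating the windowed maximum by $\eta/k\to 0$, so successes can in principle persist arbitrarily long near the floor. To be fair, the paper's own proof does not resolve this either --- it simply asserts that \eqref{eq:prox_error} holds once $\gamma$ is large and $\epsilon_{\TT}$ is small, without controlling how slowly $\delta_k$ may decay in $k$ --- so you have not overlooked an argument the paper actually supplies; but in your write-up you should either prove summability of $\Set{\delta_k^2}$ or state it explicitly as an additional assumption rather than leave it as a promissory note.
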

\begin{proof}
Assume that  $\lim\limits_{k\to\infty}\delta_k > 0$, then the condition in Line~\ref{line:if_decrease} holds for only finitely many iterations.
It follows that there exists a constant $K>0$ such that, for all $k>K$,
\[
f(x^{k+1}) \le \max\{f(x^k), f(x^{k-1}),\cdots,f(x^{k-m+1})\}-\eta/k\,.
\]
It can be shown by induction that, for all $k\ge m-1,$ 
\begin{align}
\label{eq:fdecrease}
f(x^{k+1})\le\max \bigg\{&f(x^0)-\sum_{j\in\K_0\cap\K}\frac{\eta}{j}\,,\\
&f(x^1)-\sum_{j\in \K_1\cap\K} \frac{\eta}{j}\,,
\cdots,\,f(x^{m-1})-\sum_{j\in \K_{m-1}\cap\K} \frac{\eta}{j}\bigg\}-c_k\,,\notag
\end{align}
where $\K=\Set{1,\cdots,k}$ and 
\[
\K_0=\Set{lm-1}_{l=1}^{\infty}, \,\,\K_1=\Set{lm}_{l=1}^{\infty},\,\,\cdots,\,\,
 \K_{m-1}=\Set{(l+1)m-2}_{l=1}^{\infty}\,.
\]
The right-hand side of \eqref{eq:fdecrease} decreases to $-\infty$ as $k\to\infty$, which
contradicts the assumption that $f$ is bounded below by $f(x^*).$  Hence, $\lim_{k\to\infty}\delta_k = 0$.
As shown in the proof of Theorem~\ref{thm:IPP}, $t_k = T$ for all $k$ sufficiently large.
If any of the conditions listed in Proposition~\ref{prop:single_prox} holds for $f$ and $T>0$ is sufficiently large,
then the proximal operator $\prox_{t_k f}(x^k)$ is single-valued for all $k$ sufficiently large.
According to Theorem~\ref{thm:conv} and the error analysis in Section~\ref{sec:error}, if $\gamma>1$ in Algorithm~\ref{alg:ttIPP} is sufficiently large and
the TT approximation error $\epsilon_{\TT}$ is sufficiently small, then
\eqref{eq:prox_error} holds. In particular, for $f\in H^s{(\Omega)}\cap C^2(\Omega)$,
if $\gamma > (d+2)/(2s)+1$,  and 
$\epsilon_{\TT}$ satisfies \eqref{eqn:hepsilon2} at each iteration, then by Corollary~\ref{cor:TT_error}, \eqref{eq:prox_error} holds.
It follows from Theorem~\ref{thm:IPP} that, by choosing $T>0$ sufficiently large, 
$\Set{x^k}_{k\ge 0}$ is guaranteed to converge to $x^*$ as $k\to\infty.$
\end{proof}

While a fine mesh ensures the theoretical convergence of TT-IPP according to the above corollary, using a relatively coarse mesh improves computational efficiency. The error bounds in Section~\ref{sec:error} are derived for worst-case scenarios \cite{quarteroni2010numerical}
and are often larger than actual errors observed in practice. Therefore, in numerical experiments presented in Section~\ref{sec:experiments}, 
we heuristically select the control parameters for TT-IPP, guided by the theoretical error bounds.

\subsection{The MC-IPP algorithm}
In this section, we propose a practical IPP algorithm, MC-IPP, which is based on the Monte Carlo (MC) estimates of proximal operators.
The MC estimate of a proximal operator is given by
\begin{equation}
    \left(\prox_{t f}^{\delta}(x)\right)_{\MC} := 
\frac{\sum_{i=1}^N z^i\exp({-f(z^i)/\delta})}{\sum_{i=1}^N \exp({-f(z^i)/\delta})}\approx \prox_{tf}(x)
\end{equation}
for a sample size $N\ge 1$ and sample points $z^i\overset{\text{iid}}\sim \mathcal{N}(x, \delta t I_d)$
from the Gaussian distribution centered at \( x \).

To motivate the design of this algorithm, we provide an analysis of the MC sample complexity under the assumptions in 
Theorem~\ref{thm_C2} on the function $\phi$ given in \eqref{eqn:phi}.
Without loss of generality, we assume $\phi_{\min}=0.$
Define
\[
\beta_1:= \int z\exp\left({-\frac{\phi(z)}{\delta}}\right) dz \,\textrm{ and }\,
\hat\beta_1 :=\frac{(2\pi t\delta)^{d/2}}{N}\sum_{i=1}^N z^i\exp\left({-\frac{f(z^i)}{\delta}}\right)\approx \beta_1\,,
\]
Then by standard results of MC integration \cite{lemieux2009monte},
the expectation of $\hat \beta_1$ is $\expect{\hat \beta_1} = \beta_1$ and $\norm{\beta_1}=\bigO\left(\delta^{d/2}\right)$. Moreover,
following similar arguments as in the proof of Theorem~\ref{thm_C2}, the covariance satisfies
\begin{align*}
\norm{\Cov{\hat \beta_1}}_2 \le &\frac{1}{N} \int \norm{(2\pi t\delta)^{d/2}z\exp\left(-\frac{f(z)}{\delta}\right)-\beta_1 e}^2
\frac{\exp{\left(-\frac{\norm{z-x}^2}{2t\delta}\right)}}{(2\pi t\delta)^{d/2}}dz \\
= & \frac{1}{N} \int (2\pi t\delta)^{d/2} \norm{z}^2 \exp{\left(-\frac{2f(z)}{\delta}-\frac{\norm{z-x}^2}{2t\delta}\right)}dz-\frac{\|\beta_1\|^2}{N}\\
=& \bigO\left(\frac{\delta^{d}}{N}\right).
\end{align*}
Similarly, define
\[
\beta_2:= \frac{1}{N}\int \exp\left(-\frac{\phi(z)}{\delta}\right) dz \quad\textrm{ and }\quad \hat\beta_2 :=\frac{(2\pi t\delta)^{d/2}}{N}\sum_{i=1}^N \exp\left({-\frac{f(z^i)}{\delta}}\right)\,,
\]
then $\expect{\hat \beta_2} = \beta_2= \bigO\left(\delta^{d/2}\right)$ and $\Var{\hat \beta_2} = \bigO\left({\delta^{d}}/{N}\right)$.
Notice that, for arbitrary $\gamma\in (0,1)$, if
$\norm{\beta_1-\hat\beta_1} =\bigO(\delta^{d/2+\gamma})$ and
$\left|\beta_2-\hat\beta_2\right|=\bigO(\delta^{d/2+\gamma})$,
then
\[
\norm{\prox_{tf}(x)-\left(\prox_{t f}^{\delta}(x)\right)_{\MC}}\le \bigO(\delta)+
\norm{\frac{\beta_1}{\beta_2}-\frac{\hat \beta_1}{\hat\beta_2}}=\bigO(\delta^\gamma)\,.
\]
By Chebyshev's inequality, 
\[
\pr{\norm{\beta_1-\hat\beta_1}>\delta^{d/2+\gamma}}\le \norm{\Cov{\hat \beta_1}}_2/(\delta^{d+2\gamma})
\]
and
\[
\pr{\left|\hat\beta_2-\beta_2\right|>\delta^{d/2+\gamma}}\le \Var{\hat \beta_2}/(\delta^{d+2\gamma})\,,
\]
which implies \[
\pr{\norm{\prox_{tf}(x)-\left(\prox_{t f}^{\delta}(x)\right)_{\MC}}>\delta^\gamma} \le \bigO\left(\frac{1}{N\delta^{2\gamma}}\right).
\]
Therefore, for $\gamma\in (0,1/4)$, with a sample size 
$N = \bigO(\delta^{-1/2})$,
\[
\pr{\norm{\prox_{tf}(x)-\left(\prox_{t f}^{\delta}(x)\right)_{\MC}}>\delta^\gamma} \le \bigO\left(\delta^{1/2-2\gamma}\right)\,.
\]
The above computation suggests that a fixed sample size suffices to approximate the proximal point with a prescribed accuracy. However, the accuracy of approximation still depends on $d$ at least linearly due to the magnitude of the $ d$-dimensional Euclidean norm, and hence the variance of the estimate may still be large in the high-dimensional case. To address this issue and reduce sample complexity, variance reduction techniques can be employed \cite{lepage1980vegas,reddi2016stochastic,lemieux2009monte}.
Here we consider a simple variance reduction technique called the exponentially weighted moving average (EWMA) \cite{ross2009probability, kingma2014adam},
which computes
\[
x^{k+1} = \alpha \left(\prox_{t_k f}^{\delta_k}(x^k)\right)_{\MC}+(1-\alpha)x^{k}
\]
for a damping parameter $\alpha\in(0,1).$ EWMA reduces the variances of $\hat\beta_1$ and $\hat\beta_2$ to approximately
$\bigO\left(\frac{\delta^{d}\alpha}{N (2-\alpha) }\right)$, thereby reduces the required sample size to
\begin{equation}\label{eq:sampleComp}
N = \bigO\left(\frac{\delta^{-1/2}\alpha}{2-\alpha}\right).
\end{equation}
In practice, we observe in our numerical experiments in Section~\ref{sec:experiments} that a sample size smaller than the scale of \eqref{eq:sampleComp} is needed
to achieve a desirable empirical accuracy.

Details of the MC-IPP algorithm are summarized in Algorithm~\ref{alg:mcIPP}. By \eqref{eq:sampleComp}, when $\alpha$ is close to $0$, the iterates move slowly, and the required sample size is small; whereas when
$\alpha$ is close to $1,$ the iterates move fast and the required sample size is large. 
In MC-IPP, we adaptively update the damping parameter $\alpha_k$ at each iteration $k$ based on whether a sufficient decrease in the function value is achieved
(see Line~\ref{line:alpha_min} and Line~\ref{line:alpha_max}). The algorithm also adaptively increases the sample size $N_k$ and decreases the parameter $\delta_k$.
When an increase in the function value is observed compared to several previous iterates, we reject the MC estimate with a positive probability $p$ (see Line~\ref{line:reject}) and resample. Additionally, MC-IPP can be warm-started by computing an MC estimate of \eqref{eq:initial} with $N_0$ uniformly distributed sample points.

\begin{algorithm}[htp!]
\caption{Monte-Carlo Inexact Proximal Point Method (MC-IPP)}\label{alg:mcIPP}
\begin{algorithmic}[1] 
    \STATE \textbf{Input:} $x^0\in\Re^d$, $0<\delta_0< 1$, $0<\eta_-<1<\eta_+$, $0<\theta_1\le\theta_2<1$, $\bar\epsilon>0$, 
    $0<\eta<1$, $T>0$, $0<\tau\le t_0\le T$, 
    $0<\alpha_{\min}\le\alpha_0\le\alpha_{\max}\le 1$, $0<p<1$, $N_0>0$, $C>1$, $0<c<1$, $m\ge 1$, $k_{\max}>0$, and $\epsilon_{stop}>0$
    \STATE $k \gets 0$
    \WHILE{$k < k_{\max}$}
        \STATE $y^{k} \gets \alpha_k \left(\prox_{t_k f}^{\delta_k}(x^k)\right)_{\MC} + (1-\alpha_k)x^{k}$ \label{line:yk}
        \IF{$k \ge m-1$ \textbf{and} $f(y^{k}) > \max\{f(x^k), f(x^{k-1}), \dots, f(x^{k-m+1})\} - \eta/k$}
            \IF{$f(y^{k}) \ge \max\{f(x^k), f(x^{k-1}), \dots, f(x^{k-m+1})\}$}
                \STATE Reject $y^k$ and return to Line~\ref{line:yk} with probability $p$ \label{line:reject}
            \ENDIF
            \STATE $x^{k+1} \gets y^k$
            \STATE $\delta_{k+1} \gets c \delta_k$
            \STATE $\alpha_{k+1} \gets \max\{\alpha_{\min}, c\alpha_k\}$ \label{line:alpha_min}
            \STATE $N_{k+1} \gets C N_k$
        \ELSE
            \STATE $x^{k+1} \gets y^k$
            \STATE $\delta_{k+1} \gets \delta_k$
            \STATE $\alpha_{k+1} \gets \min\{\alpha_k / c, \alpha_{\max}\}$ \label{line:alpha_max}
            \STATE $N_{k+1} \gets N_k$
        \ENDIF
        \STATE Determine $t_{k+1}$ using Lines~\ref{line:qk}--\ref{line:tk} of Algorithm~\ref{alg:IPP}
        \STATE $k \gets k + 1$
        \IF{$\norm{x^{k+1} - x^k} < \epsilon_{stop}$}
            \STATE \textbf{Break}
        \ENDIF
    \ENDWHILE
    \STATE \textbf{Output:} last iterate $x^k$
\end{algorithmic}
\end{algorithm}

The almost sure convergence of MC-IPP is a corollary of the convergence of IPP methods proved in Theorem~\ref{thm:sIPP}.
\begin{corollary}
For $f:\Re^d\to\Re$, suppose Assumptions~\ref{assum1} -- \ref{assum3} hold, and 
one of the conditions listed in Proposition~\ref{prop:single_prox} holds.
Let $\Set{x^k}_{k\ge 0}$ be the sequence of iterates generated by Algorithm~\ref{alg:mcIPP}. 
If $T>0$ and $C>1$ chosen for Algorithm~\ref{alg:mcIPP} are sufficiently large, and $\alpha_{\min}>1-\eta_-$, 
then $\Set{x^k}_{k\ge 0}$ converges to $x^*$ almost surely
as $k\to\infty.$
\end{corollary}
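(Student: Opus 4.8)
The plan is to reduce the claim to Theorem~\ref{thm:sIPP} by verifying its hypotheses for the stochastic sequence produced by MC-IPP, in close analogy with the proof of Corollary~\ref{cor:ttIPP} for TT-IPP. Concretely, I would (i) show that $\delta_k\to 0$ almost surely; (ii) argue that $t_k=T$ for all $k$ large and that, for $T$ sufficiently large, $\prox_{t_k f}(x^k)$ is single-valued by Proposition~\ref{prop:single_prox}; and (iii) establish the probabilistic inexactness condition \eqref{eq:pprox_error}, identifying the IPP estimate with the undamped quantity $\left(\prox^{\delta_k}_{t_k f}(x^k)\right)_{\MC}$ and $\hat x^k$ with $\prox_{t_k f}(x^k)$, so that the update $x^{k+1}=\alpha_k\left(\prox^{\delta_k}_{t_k f}(x^k)\right)_{\MC}+(1-\alpha_k)x^k$ matches Algorithm~\ref{alg:IPP} with damping $\alpha_k$. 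The hypothesis $\alpha_{\min}>1-\eta_-$ is exactly the standing assumption of Theorem~\ref{thm:sIPP}. Once \eqref{eq:pprox_error} is in hand, Theorem~\ref{thm:sIPP} yields $f(x^k)\to f_{\min}$ almost surely, and a final compactness argument upgrades this to $x^k\to\xstar$.

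For step (i) I would argue pathwise exactly as in \eqref{eq:fdecrease}. Since $\delta_k$ is nonincreasing and takes values in $\Set{c^j\delta_0}_{j\ge 0}$, the event $\lim_k\delta_k>0$ forces the reduction (IF) branch of Algorithm~\ref{alg:mcIPP} to fire only finitely often, so the sufficient-decrease (ELSE) branch holds for all large $k$; the residue-class induction of \eqref{eq:fdecrease} then drives $f(x^{k+1})$ to $-\infty$, contradicting Assumption~\ref{assum1}. The rejection step (Line~\ref{line:reject}) does not affect this argument, since it only resamples $y^k$ and is irrelevant on a tail of sufficient-decrease steps. Hence $\delta_k\to 0$ almost surely. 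Step (ii) follows as in the proof of Theorem~\ref{thm:IPP}: once \eqref{eq:pprox_error} gives $\sum_k\norm{\hat x^k-x^k}^2<\infty$ almost surely, one obtains $q_k\to 0$ and $t_k=T$ eventually, and single-valuedness holds for $T$ large under any condition of Proposition~\ref{prop:single_prox}; only the tail, where $t_k=T$, matters for the summability in (iii).

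The crux is step (iii). Using the Chebyshev bounds from the sample-complexity analysis above (the variance estimates $\norm{\Cov{\hat\beta_1}}_2=\bigO(\delta^d/N)$ and $\Var{\hat\beta_2}=\bigO(\delta^d/N)$ together with the $\bigO(\delta)$ bias from Theorem~\ref{thm_C2}), I would fix $\gamma\in(0,1)$, set $\epsilon_k\asymp\delta_k^{2\gamma}$ so that the $\bigO(\delta_k)$ bias is of lower order on the good event, and take $p_k=\bigO\!\left(1/(N_k\delta_k^{2\gamma})\right)$, giving $\pr{\norm{y^k-\hat x^k}^2>\epsilon_k}\le p_k$. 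Writing $m_k$ for the number of $\delta$-reductions up to step $k$, one has $\delta_k=c^{m_k}\delta_0$ and $N_k=C^{m_k}N_0$, whence $N_k\delta_k^{2\gamma}=(Cc^{2\gamma})^{m_k}N_0\delta_0^{2\gamma}$; choosing $C>c^{-2\gamma}$ makes both $\epsilon_k$ and $p_k$ decay geometrically in $m_k$ (the EWMA damping only improves the constants through the factor $\alpha_k/(2-\alpha_k)$ in \eqref{eq:sampleComp}).

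The main obstacle is that $m_k$ increases only on reduction steps, so $\delta_k$ and $N_k$ stay constant over random stretches between reductions, and the summability $\sum_k\epsilon_k<\infty$ and $\sum_k p_k<\infty$ requires controlling the lengths $\ell_m$ of these constant-$\delta$ stretches rather than merely the per-level terms. I would handle this by observing that every non-reduction step is a genuine sufficient-decrease step contributing at least $\eta/k$ relative to the window maximum; applying the residue-class accounting of \eqref{eq:fdecrease} over each stretch and using boundedness below (Assumption~\ref{assum1}) bounds the aggregate stretch contribution, so that $\sum_m\ell_m c^{2\gamma m}<\infty$ and $\sum_m\ell_m(Cc^{2\gamma})^{-m}<\infty$, while the rejection rule prevents uncontrolled increases on reduction steps. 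With \eqref{eq:pprox_error} verified, Theorem~\ref{thm:sIPP} gives $f(x^k)\to f_{\min}$ almost surely; since $f$ is coercive (Assumption~\ref{assum2}) the iterates are almost surely bounded, so every subsequential limit is a global minimizer, and uniqueness of $\xstar$ (from the relevant case of Proposition~\ref{prop:single_prox}) forces $x^k\to\xstar$ almost surely.
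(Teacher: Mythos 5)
Your proposal follows essentially the same route as the paper: show $\delta_k\to 0$ and $N_k\to\infty$ by the \eqref{eq:fdecrease} contradiction, conclude $t_k=T$ eventually and single-valuedness via Proposition~\ref{prop:single_prox}, verify the probabilistic inexactness condition \eqref{eq:pprox_error} from the Chebyshev/variance bounds of the MC sample-complexity analysis with $C$ large, and invoke Theorem~\ref{thm:sIPP}. You are in fact more explicit than the paper on the summability of $\epsilon_k$ and $p_k$ (the paper simply cites ``standard results of MC integration'' and $C$ sufficiently large); note only that your claimed bound $\sum_m \ell_m c^{2\gamma m}<\infty$ on the constant-$\delta$ stretch lengths does not obviously follow from the $\eta/k$ decrease accounting, since that accounting controls roughly $\sum_m\log(1+\ell_m/k_m)$ rather than the weighted stretch lengths — but this is a point the paper's own proof does not address either.
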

\begin{proof}
Assume that $\lim\limits_{k\to\infty}\delta_k > 0$ or $\lim\limits_{k\to\infty} N_k<\infty$. 
Following similar arguments as in the proof of Corollary~\ref{cor:ttIPP},
the right-hand side of \eqref{eq:fdecrease} would decrease to $-\infty$ as $k\to\infty$, which
contradicts the assumption that $f$ is bounded below. Therefore, $\lim\limits_{k\to\infty}\delta_k = 0$
and $\lim\limits_{k\to\infty} N_k=\infty.$
As shown in the proof of Theorem~\ref{thm:sIPP}, $t_k = T$ for all $k$ sufficiently large almost surely.
If any of the conditions listed in Proposition~\ref{prop:single_prox} holds for $f$ and $T>0$ is sufficiently large,
then the proximal operator $\prox_{t_k f}(x^k)$ is single-valued for all $k$ sufficiently large almost surely.
By Theorem~\ref{thm:conv} and standard results of MC integration, if choosing $C>1$ sufficiently large in Algorithm~\ref{alg:mcIPP},
the condition \eqref{eq:pprox_error} holds. Therefore, if $T>0$ and $C>1$ chosen for Algorithm~\ref{alg:mcIPP} are sufficiently large, and $\alpha_{\min}>1-\eta_-$,
by Theorem~\ref{thm:sIPP}, $\Set{x^k}_{k\ge 0}$ converges to $x^*$ almost surely
as $k\to\infty.$
\end{proof}

While a sufficiently large $C$ ensures the theoretical convergence of Algorithm~\ref{alg:mcIPP} according to the above corollary, 
a smaller $C$ reduces the required number of function evaluations. Therefore, in the numerical experiments presented in Section~\ref{sec:experiments}, 
the control parameters for MC-IPP are chosen heuristically.

We remark that although the proximal operator in \eqref{eqn:aprox} could, in principle, be approximated using other sampling algorithms—such as the Unadjusted Langevin 
Algorithm or Hamiltonian Monte Carlo—these methods do not offer significant improvement in the current setting. As illustrated in Fig.~\ref{fig:1d}, the function being sampled is approximately unimodal and relatively easy to handle. As a result, the simple Monte Carlo integration employed in Algorithm~\ref{alg:mcIPP} already yields satisfactory results. Extending our IPP framework to incorporate more advanced sampling methods that exploit gradient information remains a promising direction for future work.

\section{Experiments}\label{sec:experiments}
This section presents experimental results of TT-IPP and MC-IPP on a diverse set of benchmark functions and two practical applications.
More applications are presented in Appendix~\ref{append:applications}, including the use of TT-IPP for solving the Hamilton-Jacobi equation \cite{evans2022partial}.
\subsection{Experiments on benchmark functions}
\label{sec_Ex}
In this section, we test the proposed IPP algorithms on benchmark functions from the established function library \cite{benchmark_Andrea}, unless stated otherwise.
The performance of each algorithm in all numerical experiments is assessed using the accuracy metric \(\|x^k -x^*\|_{\infty}\), where \(x^*\) denotes the global minimizer, and \(x^k\) is the final iterate upon termination. For ease of comparison, test functions were shifted from their original definitions to ensure that their minimum values are within \([-1,1]^d\) and the minimum value is $0$. 
 We compare our algorithms with several existing global optimization algorithms discussed in Section~\ref{sec:prior}. 
For HJ-MAD and TT-Opt, we utilized the original implementations provided by the authors in \cite{heaton2024global, sozykin2022ttopt}. The implementation of CBO was based on the code from \cite{CBO_Analysis}. For PSO \cite{PSO_first}, PRS \cite{locatelli2013global}, and SA \cite{sa_original}, MATLAB's built-in functions were used, while the implementation of DE was from \cite{BuehrenDE_Code}. 
For particle-based methods including CBO, PSO, and DE, we used $40d$ particles in each iteration and reported the location of the best particle in the whole population. 

For the proposed TT-IPP in Algorithm \ref{alg:ttIPP} and MC-IPP in Algorithm \ref{alg:mcIPP}, choices of
the control parameters are summarized in Table~\ref{table:parms}. These parameters were selected heuristically, 
guided by theoretical error bounds or theoretical sample complexity derived in previous sections.
In general, we observed that our IPP algorithms exhibit robustness across different parameter choices.
For TT-IPP, the domain of the test functions is restricted on $\Omega=[-5,5]^d$ due to the requirement of constructing TT approximations, and a uniform mesh was used.
Our implementation of TT-IPP is based on the implementation of the TT-cross algorithm \cite{savostyanov2014quasioptimality} in the
TT toolbox \cite{TT-Toolbox}.  The initial guess $x^0$ was chosen to be a TT estimate of \eqref{eq:initial} obtained on a coarse initial mesh.
For MC-IPP, the initial sample size for estimating the proximal operator was set to be $N_0=40d$, where $d$ represents the dimension of each problem, and $x^0$ was chosen to be an MC estimate of \eqref{eq:initial} obtained using $40d$ initial sample points from the uniform distribution on $[-3,3]^d$.
For other iterative solvers, $x^0$ was chosen randomly from the uniform distribution on $[-3,3]^d$.

\begin{table}[htbp]
\centering
\begin{tabular}{l|p{0.39cm}p{0.39cm}p{0.39cm}p{0.39cm}p{0.39cm}p{0.39cm}p{0.39cm}p{0.39cm}p{0.39cm}p{0.39cm}p{0.39cm}p{0.39cm}}
\toprule
& $\delta_0$ & $\eta_-$ & $\eta_+$ & $\theta_1$ & $\theta_2$ & $\bar{\epsilon}$ & $\eta$ & $T$ & $\tau$ & $t_0$ &  $C$ & $m$ \\\midrule
\textbf{TT-IPP} & $0.1$ & $0.5$ & $2$ & $0.25$ & $0.75$ & $0.2$ & $10^{-3}$ & $20$ & $0.5$ & $1$ & $10^3$ & $4$ \\
\textbf{MC-IPP} & $0.1$ & $0.9$ & $2$ & $0.25$ & $0.75$ & $0.2$ & $10^{-3}$ & $20$ & $0.5$ & $1$ & $1.1$ & $4$ \\
\bottomrule
\end{tabular}
\caption{Control parameters for TT-IPP and MC-IPP algorithms. Moreover, $h_0 = 0.1$, $\gamma = 1.1$ for TT-IPP; $c = 0.9$, $\alpha_{min} = 0.2$, $\alpha_{max}=0.3$, $p = 0.8$ for MC-IPP} \label{table:parms}
\end{table}

Table \ref{table:TT_benchmark} compares TT-IPP with other solvers on benchmark functions under two scenarios: 
\begin{enumerate}
    \item  The number of function evaluations required to achieve the desired accuracy. 
    \item  The final error after a fixed number of function evaluations. 
\end{enumerate}
As shown in Table \ref{table:TT_benchmark}, TT-IPP significantly outperforms other methods, particularly in cases where $d\geq 20$. While the function evaluations for other methods increase almost exponentially with dimension, TT-IPP (and TT-Opt, to a lesser extent) demonstrate a nearly linear growth owing to the use of TT approximations.

\begin{table}[htbp]
\[\def\arraystretch{1.3}
\hspace*{-0.6cm}
\begin{array}{@{} l| *{5}{c} l |*{4}{c} @{}}
\toprule
\text{\makecell{Test\\ Problems}}     & \multicolumn{5}{c@{}}{\text{Func. Eval. till $\|x_k-x^*\|_{\infty}\leq 10^{-2}$}} & &\multicolumn{4}{c@{}}{\text{Avg. Error after 500 K Func. Eval.}} \\
\cmidrule(l){2-6} \cmidrule(l){8-11}
&\text{TT-IPP} &\text{DE} &\text{PSO} &\text{SA} &\text{TT-Opt} & &\text{TT-IPP} &\text{DE}
&\text{PSO} &\text{SA}\\
\cline{1-11} 
\text{Griewank } \mathbb{R}^4 & \textbf{5379} & 17K & -& - & 22K & &   \mathbf{3.31\times 10^{-4}}    & 5.06 &   1.25   &  1.62 \\
\text{Griewank } \mathbb{R}^{10}& \textbf{14K} & - & - &- &54K & & 
    \mathbf{5.15\times 10^{-5}}   & 9.80   & 2.65   & 1.31 \\
\text{Griewank } \mathbb{R}^{20} & \textbf{38K} &- & - &- &117K  & &     \mathbf{6.22\times 10^{-4}}    &6.37    & 2.13  &   2.13
\\
\text{Griewank } \mathbb{R}^{50} &   \textbf{69K} & - &- &- &309K &  &  \mathbf{2.93\times 10^{-4}} &    3.78    & 1.39  &   1.61 \\
\text{Griewank } \mathbb{R}^{100}   & \textbf{140K} & - & 476K& -&627K && \mathbf{2.97\times 10^{-4}} & 3.03& 1.90& 2.16\\
\cline{1-11} 
\text{Levy 3 } \mathbb{R}^{5}   & \textbf{3899} & 10K & 11K &- &22K& & \mathbf{3.37\times 10^{-5}}  &   9.00\times 10^{-5}  &   7.67\times 10^{-4}  & 2.25 \\
\text{Levy 3 } \mathbb{R}^{20}   & \textbf{17K} & 384K & 87K &- &117K &&   {4.16\times 10^{-4}} &   \mathbf{1.25\times 10^{-4}}   & 7.94\times 10^{-4}   & 2.81 \\
\text{Rastrigin } \mathbb{R}^{5}   & \textbf{3899} & 12K & 8200 &- &22K & &  \mathbf{6.89\times 10^{-4}} & 3.61\times 10^{-3} &    0.1002 &    2.68 \\
\text{Rastrigin } \mathbb{R}^{20}   & \textbf{17K} & - & - & -&117K & &  \mathbf{8.76\times 10^{-4}} &2.74   & 1.98   & 2.88 \\
\text{Ackley } \mathbb{R}^{5}   & {11K} & {12K} & \textbf{10K} & -&43K & & \mathbf{1.05\times 10^{-5}} & 3.61\times 10^{-3} &    7.23\times 10^{-4} &  2.35 \\
\text{Ackley } \mathbb{R}^{20}  & \textbf{112K} & 303K & 203K &- &235K & &   \mathbf{4.44\times 10^{-5}}  & 2.52\times 10^{-3}&   8.61\times 10^{-4}    &2.85\\
\text{Ackley } \mathbb{R}^{50}   & \textbf{336K} & - & - &- &618K & &  \mathbf{4.02\times 10^{-4}} & 3.22 & 9.48\times 10^{-4} &  2.95 \\
\cline{1-11} 
\text{Brwon } \mathbb{R}^{10}   & {7396} & 55K & 27K &\textbf{7152}&457K & &<10^{-16} & <10^{-16}  & 8.94\times 10^{-4} &7.34\times 10^{-4}  \\
\text{Exponential } \mathbb{R}^{10}   & \textbf{7494} & - & 
- & - &-& &\mathbf{<10^{-16}} &2.68   & 1.46  & 3.12\\
\text{Trid } \mathbb{R}^{10}   & \textbf{7434} & 47K & 34K &7591 &107K & &   {1.18\times 10^{-8}}  & \mathbf{ <10^{-16}} &  7.41\times 10^{-4} &    9.64\times 10^{-4} \\
\text{Schaffer 1 } \mathbb{R}^{5} \text{\cite{test_problems_2005}}  & \textbf{27K} & 292K & -&- &43K &&  \mathbf{6.46\times 10^{-8}} &2.81   & 2.96   & 2.88 \\
\text{Corrugated } \mathbb{R}^{5}   & \textbf{114K} & 322K & - & -&163K & &\mathbf{3.15\times 10^{-7}} &  0.431  & 0.515 &   2.445
 \\
\text{Cos. Mix. } \mathbb{R}^{20}  & \textbf{15K} & - &- &17K &235K& & \mathbf{6.43\times 10^{-6}}& 8.75 &  2.62   & 6.07\times 10^{-4} \\
\text{Alphine 1 } \mathbb{R}^{5}   & \textbf{26K} & 30K &- & -&41K & & \mathbf{4.72\times 10^{-4}} &    3.75\times 10^{-3} &  2.21   &3.14 \\
\bottomrule
\end{array}
\]
\caption{Comparison of TT-IPP with other algorithms. The values on the left represent the number of function evaluations required to achieve the desired accuracy $\|x-x^*\|_{\infty} \leq 10^{-2}$, with ``-" indicating that the method fails to converge after $1000K$ function evaluations. The values on the right represent final errors achieved either when $\|x-x^*\|_{\infty} \leq 10^{-3}$ or after $500K$  function evaluations. For the last three methods, the results are the average errors obtained from 50 random initial guesses.}
\label{table:TT_benchmark}
\end{table}

In Table~\ref{table:MC_benchmark}, we compare MC-IPP and other solvers when only a limited number of function evaluations are allowed. 
TT-IPP and TT-Opt are excluded from the comparison because they both rely on the computation of TT approximations on a mesh grid, reducing randomness in their results and limiting their ability to explore larger domains.
Results in Table~\ref{table:MC_benchmark} show that MC-IPP outperforms other methods in most cases, consistently demonstrating its advantage for functions defined on $\mathbb{R}^{10}$ and $\mathbb{R}^{20}$, 
making it a strong candidate for global optimization under resource constraints.

Figure~\ref{fig:path} illustrates the trajectories of different optimization algorithms. From Figure~\ref{fig:path}, it can be observed that TT-IPP provides a robust and direct convergence path to the global minimizer, leveraging the information across the entire domain efficiently. For MC-IPP, although the trajectory exhibits slight oscillations and requires more iterations, it maintains a direct and reliable path to the global minimizer without being trapped at local minimizers. In contrast, algorithms like DE and PSO tend to wander or converge to local minimizers within the domain.

Additionally, Table~\ref{tab:tt_comparison} compares the performance of TT-IPP in iteratively minimizing the Schaffer 02 function on \(\mathbb{R}^{10}\) with the direct evaluation of \eqref{eq:initial} for a small fixed \(\delta\). TT-IPP terminates when \(\|x^k - x^*\|_{\infty} \leq 10^{-5}\), while for the direct evaluation, the mesh size for the TT approximation is set as \(h = \delta / 2\). The first two rows of the table demonstrate that starting with a larger initial \(\delta\) enables TT-IPP to obtain a good initial guess on a coarser mesh, thereby reducing the number of function evaluations required for convergence.
On the other hand, using a smaller \(\delta\) leads to a TT approximation with a lower rank, as shown in Figure~\ref{fig:regula}, which reduces the associated storage requirements and computational costs. 
A comparison of the first two rows (TT-IPP results) with the last three rows (results from direct integral evaluation) demonstrates that TT-IPP achieves significantly higher accuracy with fewer function evaluations. This underscores the advantages of TT-IPP in delivering accurate solutions while ensuring computational efficiency.
 
\begin{table}[htbp]
\[\def\arraystretch{1.3}
\begin{array}{@{} l| *{7}{c} l}
\toprule
\text{\makecell{\\Optimization\\ Algorithm}} & \multicolumn{7}{c@{}}{\text{Error after $10^4$ Func. Eval. in $\mathbb{R}^{10}$}} &  \\
\cmidrule(l){2-8}
& \text{MC-IPP} & \text{HJ-MAD} & \text{CBO} & \text{PRS} & \text{DE} & \text{PSO} & \text{SA} & \\
\cline{1-8}
\text{Griewank} &  \mathbf{6.65 \times 10^{-2}} & 1.75 & >5 & 4.28 & >5 & >5 & >5 \\
\text{Levy 3 } & \mathbf{6.49 \times 10^{-1}} & 1.80 & 4.52 & 2.78 & 1.29 & 1.68 & 4.72 \\
\text{Zakharov} & \mathbf{1.71 \times 10^{-1}} & 1.81 & 3.14 & 2.56 & 4.06 & 3.12 \times 10^{-1} & 1.28 \\
\text{Ackley} & \mathbf{7.81 \times 10^{-2}} & 1.70 & 2.77 & 2.04 & 7.14 \times 10^{-1} & 8.93 \times 10^{-2} & 4.46 \\
\text{Rosenbrock}& \mathbf{2.70 \times 10^{-1}} & 2.06 & 3.97 & 2.28 & 1.20 & 1.00 & 1.57 \\
\text{Brown}& 1.08 \times 10^{-1} & 1.59 & >5 & 2.39 & 1.37 & \mathbf{1.02 \times 10^{-1}} & 1.00 \\
\text{Exponential}& \mathbf{7.01 \times 10^{-2}} & 1.89 & >5 & 4.35 & 4.50 & >5 & 4.63 \\
\text{Cos. Mix.} &\mathbf{6.75 \times 10^{-2}} & 1.82 & >5 & 2.48 & >5 & >5 & >5 \\
\text{Alphine 1}& \mathbf{1.81 \times 10^{-1}} & 2.54 & 4.79 & 3.32 & >5 & 4.17 & 3.85 \\
\text{Dropwave}& \mathbf{3.21 \times 10^{-1}} & 1.71 & 3.15 & 2.19 & 1.96 & 4.18 \times 10^{-1} & 4.31 \\
\toprule
& \multicolumn{7}{c@{}}{\text{Error after $4\times 10^4$ Func. Eval. in $\mathbb{R}^{20}$}} &  \\
\cmidrule(l){1-8}

\text{Griewank} & \mathbf{8.11 \times 10^{-2}} & 2.49 & >5 & 4.43 & >5 & 4.47 & 4.67 \\
\text{Rosenbrock} & \mathbf{5.97 \times 10^{-1}} & 2.16 & 4.49 & 3.23 & >5 & 1.40 & 1.08 \\
\text{Exponential} & \mathbf{1.15 \times 10^{-1}} & 2.88 & >5 & 4.69 & 4.77 & >5 & >5 \\
\text{Cos. Mix.} & \mathbf{5.85 \times 10^{-2}} & 2.38 & >5 & 4.22 & >5 & >5 & >5 \\
\text{Alphine 1} & \mathbf{1.33 \times 10^{-1}} & 2.92 & >5 & 3.64 & >5 & >5 & 4.31 \\
\text{Dropwave} & 5.07 \times 10^{-1} & {2.38} & 4.62 & 3.70 & 4.12 & \mathbf{4.69 \times 10^{-1}} & 4.85 \\
\toprule
\end{array}
\]
\caption{Comparison of MC-IPP with other algorithms. Average errors over $10$ runs are reported for benchmark functions on $\mathbb{R}^{10}$ and $\mathbb{R}^{20}$, with a maximum number of function evaluations.}
\label{table:MC_benchmark}
\end{table}

\begin{figure}
    \centering    
    \includegraphics[width=0.45\linewidth]{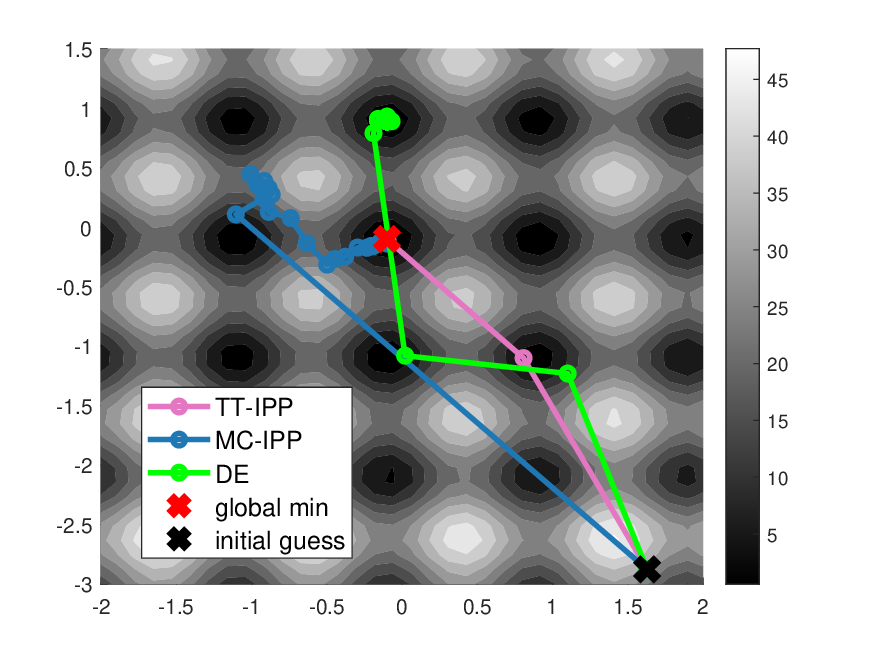}
    \includegraphics[width=0.45\linewidth]{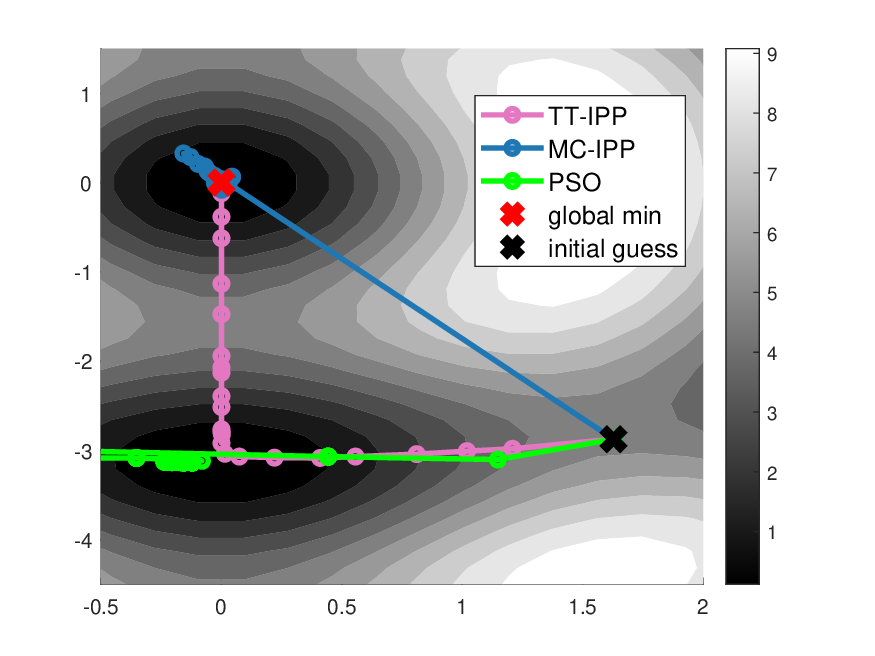}
    \caption{Trajectories of different algorithms for minimizing Rastrigin function (left) and Schaffer 1 function (right).}
    \label{fig:path}
\end{figure}

\begin{table}
\centering
\begin{tabular}{c|c|c|c|c|c}
\hline
& $\|x_k - x^*\|_{\infty}$ & \makecell{$\delta_K$} & \makecell{Initial \\ TT rank}& \makecell{Final \\ TT rank}& Func. Eval. \\ \hline
\makecell{TT-IPP \\with $\delta_0 = 0.5$} & $2.89 \times 10^{-8}$ & $3.91\times 10^{-3}$ & 7 & 1  &59K \\\hline
\makecell{TT-IPP \\with $\delta_0 = 0.1$} & $2.21\times 10^{-7}$ &$1.6\times 10^{-3}$ & 4 & 1 & 101K \\\hline
\makecell{Evaluating \eqref{eq:initial} \\with $\delta = 0.05$} & $5.82 \times 10^{-2}$ &- & 4&  -& 218K \\\hline
\makecell{Evaluating \eqref{eq:initial} \\with $\delta = 0.01$} & $1.87 \times 10^{-3}$ & - & 2 &-& 514K  \\ \hline
\end{tabular}
\caption{Comparison of TT-IPP using different initial $\delta_0$ and directly evaluating \eqref{eq:initial} for Schaffer 02 function with $d = 10$, where $\delta_K$ denotes the value of $\delta$ at termination.}\label{tab:tt_comparison}
\end{table}

\subsection{Practical applications}
We test our algorithms using two practical optimization problems from engineering applications. 

\begin{enumerate}
    \item 
    The first example is from \cite{soley2021iterative}, which involves a black-box optimization problem for identifying the global minimum energy configuration in a model of a DNA chain consisting of \( d = 50 \) hydrogen-bonded adenine-thymine (A-T) base pairs. The model calculates the total energy (in electronvolts) as a sum over all base pairs, where the energy of each base pair depends on the proton's position \( x_i \). The global minimum at \( x_i^* = -1 \) corresponds to the stable A-T configuration, while \( x_i = 1 \) represents the less stable tautomeric A*-T* configuration. This energy landscape results in a potential energy surface with \( 2^{50} \) local minima, reflecting the vast number of possible protonation states. Determining the minimum energy configuration is biologically significant, as abnormal hydrogen bonding can disrupt correct base pairing during DNA replication, a process linked to genetic mutations and cancer formation.

\item
The second example is from a financial application described in \cite{roncalli2013introduction}, where the goal is to optimize a portfolio such that each equity contributes equally to the overall risk. The objective function is defined as  
\[
f(w) = \sum_{i=1}^d\left(w_i(\Sigma w)_i - \frac{\sqrt{w^T\Sigma w}}{d}\right)^2,
\]
where $d = 10$, \(w_i\) represents the weight of each portfolio component, and \(\Sigma\) is the covariance matrix of returns. This optimization problem is nonconvex due to the square root term involving the variance. To address the constraint \(\sum w_i = 1\), a penalty term is added to the objective function. Additionally, since portfolio weights must be non-negative and bounded by 1, the search domain is restricted to \([0,1]^d\). For the covariance matrix \(\Sigma\), we pick $\Sigma_{ij} = \exp(-|i-j|^2/4)$. 
\end{enumerate}

 \begin{table}
\centering
\[
\begin{array}{c|c|c|c|c|c|c|c|c|c}
\hline 
\text{Problems} & \text{TT-IPP} & \text{MC-IPP} & \text{HJ-MAD} & \text{CBO} & \text{PRS} & \text{DE} & \text{PSO} & \text{SA} & \text{TT-Opt} \\\hline
\text{1} & 2.11 \times 10^{-15} & 1.56 & 2.34 & 1.95 & 1.27 & 4.36 & 2.34 & 2.00 & 5 \\
\text{2} & 7.92 \times 10^{-3} & 4.87 \times 10^{-2} & 0.550 & 0.981 & 0.236 & 0.784 & 0.599 & 0.397 & 9.21 \times 10^{-2} \\
\hline
\end{array}
\]
\caption{Error of methods for solving two practical problems using up to $100K$ function evaluations.}
\label{tab:practical}
\end{table}

Table~\ref{tab:practical} reports the error $\norm{x^k-x^*}_\infty$ at termination for different methods used to solve the two practical problems with a limit of $100K$ function evaluations,
highlighting the effectiveness of TT-IPP and MC-IPP.

\section{Conclusions}\label{sec:conclud}
In this work, we formulate a theoretical framework for inexact proximal point (IPP) methods for the global optimization of continuous nonconvex functions, 
establishing convergence guarantees under mild assumptions when either deterministic or stochastic estimates of proximal operators are used. The convergence of the expectation under the associated Gibbs measure as $\delta\to 0^+$ is established, and the convergence rate of $\bigO(\delta)$ is derived 
under additional assumptions. These results serve as a theoretical foundation for evaluating proximal operators inexactly using sampling-based methods
such as MC integration. Additionally, we introduce a new TT-based approach, accompanied by an analysis of the estimation error.
Furthermore, we propose two practical IPP algorithms. TT-IPP leverages TT estimates of the proximal operators, while
MC-IPP employs MC integration to estimate the proximal operators.
Both algorithms are designed to adaptively balance efficiency and accuracy in inexact evaluations of proximal operators. 
The effectiveness of the two algorithms is demonstrated through experiments on a diverse set of benchmark functions and various applications. 

The two IPP algorithms each have their advantages and limitations. While traditional global optimization methods typically suffer from the curse of dimensionality, TT-IPP employs the randomized TT cross algorithm and leverages the
Sobolev smoothness of functions to circumvent it, making it suitable for higher-dimensional problems. However, constructing a TT approximation over a mesh 
grid involves higher initial costs and restricts the search space of TT-IPP to a bounded domain, limiting its applicability to functions defined on larger or unbounded domains. On the other hand, MC-IPP benefits from easy implementation and is not restricted to bounded domains, though it often requires many function evaluations to achieve high accuracy.
Future work includes exploring other variance reduction techniques and rejection sampling \cite{gomes2023derivativefree} to 
enhance the performance of MC-IPP, developing strategies to integrate the strengths of TT and MC techniques for improved efficiency, training machine learning models to approximate proximal 
operators \cite{cassioli2012machine}, 
and extending the algorithms to optimization problems with constraints and noise.

 \section*{Declarations}
\textbf{Funding}~
M. Zhang and H. Schaeffer were supported in part by NSF 2331033 and NSF 2427558.
F. Han and S. Osher were partially supported by AFOSR MURI FA9550-18-502 and ONR N00014-20-1-2787. 
Y. Chow was supported in part by NSF DMS-2409903 and ONR N000142412661.
\vspace{0.5em}

\noindent\textbf{Competing interests}~
The authors have no competing interests to declare that are relevant to the content of this article.
\vspace{0.5em}

\noindent\textbf{Data and Code availability}~
The data that support the findings of this study are available upon reasonable request.
The code is available at \url{https://github.com/fq-han/ipp-global-opt}.




\bibliographystyle{spmpsci}      
\bibliography{ref}

\appendix
\section{Additional proofs}\label{append:proofs}
\subsection{Proof of Theorem~\ref{thm:sIPP}}\label{app:proof27}
First, we show that $\lim\limits_{k\to\infty}t_k = T$ almost surely.
By \eqref{eq:pprox_error}
$\sum_{k=0}^\infty\pr{\norm{y^k-\hat x^k}^2>\epsilon_k}<\infty.$
The Borel–Cantelli lemma implies that
$\pr{\norm{y^k-\hat x^k}^2>\epsilon_k ~\textrm{ infinitely often}}=0$,
i.e. \[
\pr{\norm{y^k-\hat x^k}^2\le \epsilon_k ~\textrm{ for all sufficiently large } k}=1\,.
\]
Therefore, 
$\pr{\sum\limits_{k=1}^\infty \norm{y^k-\hat x^k}^2<\infty}=1.$
Then by \eqref{eq:boundfk},
\begin{equation}\label{eq:xk}
\pr{\sum_{k=1}^\infty \norm{x^{k}-\hat x^k}^2<\infty}=1\,.
\end{equation}
By the triangle inequality, 
$\pr{\lim\limits_{k\to\infty} \norm{x^{k+1}-x^k}^2=0}=1$,
which, by Lines~\ref{line:qk}--\ref{line:tk} of Algorithm~\ref{alg:IPP}, implies that $\lim\limits_{k\to\infty}t_k = T$ almost surely.

Next, we show that $\Set{\norm{\hat x^k}^2, \norm{x^k}^2}_{k\ge 0}$ is uniformly bounded by some constant $M$ with probability $1$. 
Indeed, \eqref{eq:boundfk} implies that $\Set{f(\hat x^{k})}$ is uniformly bounded by some constant $M'$ with probability $1$.
Hence, by
Assumption~\ref{assum2}, $\Set{\hat x^k}$ is uniformly bounded with probability $1$. Combining with \eqref{eq:prox_error}, it follows that
$\Set{x^k}_{k\ge 0}$ is  uniformly bounded with probability $1$ as well.

By Bolzano-Weiserstrass theorem, with probability $1$, there exists a subsequence $\Set{\hat x^{k_j}}$ that converges to some
limit $x^{\infty}.$ Following the same arguments in the proof of Theorem~\ref{thm:IPP}, if
\begin{equation}\label{eq:ptbound} T>M/\mu\,, \end{equation}
then $x^{\infty}=x^*$ with probability $1$. Since every subsequence of $\Set{\hat x^{k}}$ converges to $x^*$ almost surely, 
$\Set{\hat x^{k}}$ itself converges to $x^*$ almost surely when \eqref{eq:ptbound} holds.
Therefore, by \eqref{eq:xk}, we conclude that $\Set{x^{k}}$ itself converges to $x^*$ almost surely when \eqref{eq:ptbound} holds.

\subsection{Proof of Proposition~\ref{prop:single_prox}}\label{app:proof31}
For part (1), if $f$ is prox-regular, then by \cite[Thm. 1.3]{poliquin2010calculus}, $\operatorname{prox}_{tf}(x)$ is single-valued for all $t<1/r.$

For part (2), if  $f$ is twice continuously differentiable on a neighborhood $U$ of $x^\ast$  and $x^\ast$ is nondegenerate,
then there exists a subset $\tilde U\subset U$ such that $\xstar\in\tilde U$ and 
\begin{equation}\label{eq:epsilonH}
\norm{\Grad f(z_1)-\Grad f(z_2)} \ge \min_{z\in\tilde U}\lambda_{\min} \left (\Hess f(z)\right)\norm{z_1-z_2}:=
\epsilon_H\norm{z_1-z_2}, \qquad \forall ~z_1, z_2\in \tilde U\,,
\end{equation}
for some constant $\epsilon_H>0$. Now assume that $z_1, z_2\in \prox_{tf}(x)$, then
\[
f(z_i)+\frac{1}{2t}\norm{z_i-x}^2\le f(\xstar)+\frac{1}{2t}\norm{\xstar-x}^2
\]
for $i=1,2,$ which implies that
\[
0\le f(z_i)-f(\xstar)\le \frac{1}{2t}(\norm{\xstar-x}^2-\norm{z_i-x}^2)
\]
for $i=1,2.$ Since $\xstar$ is the unique minimizer and $f$ is p-coercive, $z_1,z_2\in\tilde U$ if $t$ is sufficently large.
Moreover, by the definition of $\prox_{tf}(x),$ 
$\Grad f(z_i) = (x-z_i)/t$ for $i=1,2\,.$
It follows that 
\[
\norm{\Grad f(z_1)-\Grad f(z_2)}=\frac{1}{t}\norm{z_1-z_2}<\epsilon_H\norm{z_1-z_2}
\]
when $t>1/\epsilon_H$. Hence, combining with \eqref{eq:epsilonH}, we have $z_1 = z_2,$ i.e. $\operatorname{prox}_{t f}(x)$ is single-valued.

Finally, for part (3), assume that $f$ is sharp on a neighborhood $U$ of $x^\ast.$ Let $\tilde U\subset U$ be a compact neighborhood of $\xstar.$
Then for sufficiently large $t$, 
\[
f(\xstar)+\frac{1}{2t}\norm{z-\xstar}<f(z), \qquad \forall ~z\in \Re^d\setminus \tilde U\,,
\]
which implies $\prox_{tf}(x)\subset \tilde U.$
Since $\tilde U$ is compact, there exists a constant $M>0$ such that, for arbitrary $z\in\tilde U,$
\[
\abs{\norm{\xstar-x}^2-\norm{z-x}^2} = \abs{\langle \xstar-z, \xstar+z-2x\rangle}\le M\norm{\xstar-z}\,.
\]
On the other hand, by \eqref{eq:sharp}, for arbitrary $z\in\tilde U$ and $t> M/\eta,$
\[
f(z)-f(\xstar) \ge \eta \norm{z-\xstar}> \frac{M}{t}\norm{z-\xstar}\ge \norm{\xstar-x}^2-\norm{z-x}^2\,,
\]
which implies 
\[
f(z)+\frac{1}{2t}\norm{z-x} > f(\xstar)+\frac{1}{2t}\norm{\xstar-x} , \qquad \forall ~z\in \tilde U\,.
\]
Hence, for $t$ sufficiently large, $\prox_{tf}(x)$ is single-valued and $\prox_{tf}(x)=\xstar.$

\subsection{Proof of Theorem~\ref{thm:conv}}\label{app:proof34}
For $\delta>0,$ write
\begin{equation*}
z_\delta := \frac{\int z\exp\left(-\phi(z)/\delta\right)dz}{\int\exp{(-\phi(z)/\delta)dz}}\,.
\end{equation*}
It follows that
\[
z_\delta-z^* = \frac{\int (z-z^*)\exp\left(-\phi(z)/\delta\right)dz}{\int\exp{(-\phi(z)/\delta)dz}}\,.
\]
Define $\tilde \phi(z):=\phi(z)-\phi(z^*).$ Then $\tilde \phi\ge 0$ and 
\[
z_\delta-z^* = 
\frac{\int (z-z^*)\exp{(-\tilde \phi(z)/\delta)}dz}{\int\exp{(-\tilde \phi(z)/\delta)dz}}\,.
\]
As $\tilde \phi$ is also $p$-coercive, there exists some constant $\eta>0$ such that $\norm{z-z^*}\ge\eta$
implies that $\tilde \phi(z)\ge\norm{z-z^*}^p.$ 
It follows that, if  $\norm{z-z^*}\ge\eta$, then $\tilde \phi(z)\ge c\eta^p+(1-c)\norm{z-z^*}^p$ for any $c\in (0,1).$
Hence, as $\delta\to 0^+,$
\[
\norm{\int_{\Set{z:\norm{z-z^*}\ge\eta}} (z-z^*)\exp{\left(-\frac{\tilde \phi(z)}{\delta}\right)}dz} = o\left(\exp\left(-\frac{\eta^p}{2\delta}\right)\right)\,,
\]
and
\[
\int_{\Set{z:\norm{z-z^*}\ge\eta}}\exp{\left(-\frac{\tilde \phi(z)}{\delta}\right)}dz =  o\left(\exp\left(-\frac{\eta^p}{2\delta}\right)\right)\,.
\]

As there exists a neighborhood $U$ of $z^*$
such that $\phi$ restricted on $U$ is continuous, for $\epsilon>0$
sufficiently small, $U_{\epsilon} := \Set{z:\norm{z-z^*}\le\epsilon}\subset U$ and 
$\min\limits_{\epsilon \le \norm{z-z^*}\le\eta}\tilde \phi(z) = \tilde\epsilon$
for some $\tilde\epsilon>0.$ Therefore, 
\[
\norm{\int_{\Set{z:\epsilon \le\norm{z-z^*}\le\eta}} (z-z^*)\exp{\left(-\frac{\tilde \phi(z)}{\delta}\right)}dz} =  o\left(\exp\left(-\frac{\tilde\epsilon}{2\delta}\right)\right) \,,
\]
and
\[
\int_{\Set{z:\epsilon \le\norm{z-z^*}\le\eta}} \exp{\left(-\frac{\tilde \phi(z)}{\delta}\right)}dz =o\left(\exp\left(-\frac{\tilde\epsilon}{2\delta}\right)\right) \,.
\]
Moreover, as $\tilde \phi(z^*)=0$ and $\tilde \phi$ is continuous on $U_\epsilon\subset U$,
\[
\int_{\Set{z:\norm{z-z^*}<\epsilon}} \exp{\left(-\frac{\tilde \phi(z)}{\delta}\right)}dz=o\left(\exp\left(-\frac{\min\{\eta^p,\tilde\epsilon\}}{2\delta}\right)\right)\,.
\]
Therefore,
\begin{align*}
\lim_{\delta\to 0^+}\norm{z_\delta-z^*} =& \lim_{\delta\to 0^+} \norm{\frac{\int_{\Re^d} (z-z^*)\exp\left(-\phi(z)/\delta\right)dz}{\int\exp{(-\phi(z)/\delta)dz}}}\\
=& \lim_{\delta\to 0^+} \norm{\frac{\int_{\Set{z:\norm{z-z^*}<\epsilon}} (z-z^*)\exp\left(-\phi(z)/\delta\right)dz}{\int_{\Set{z:\norm{z-z^*}<\epsilon}}\exp{(-\phi(z)/\delta)dz}}}
\le \epsilon\,.
\end{align*}
Since $\epsilon >0$ can be arbitrarily small,
$\lim\limits_{\delta\to 0^+}\norm{z_\delta-z^*} = 0$, which completes the proof. 

\subsection{Proof of Theorem~\ref{thm_C2}}\label{app:proof35}
We first include two useful lemmas.
\begin{lemma}[Morse lemma \cite{gamkrelidze2012analysis}]\label{lem:morse}
Let $z^*$ be a nondegenerate stationary point of $\phi$. There exists an open neighborhood $U$ of $z^*$ and a 
homeomorphism $T:U\to V \subset\Re^d$ such that
$T(z^*) = 0$, $\det(T'(z^*))=1$,
and, with $y=T(z),$
$\phi(z) = \sum_{i=1}^d \lambda_i y_i^2$,
where $\lambda_1,\lambda_2,\cdots,\lambda_n$ are eigenvalues of $\Hess \phi(z^*)$.
\end{lemma}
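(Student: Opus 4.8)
The plan is to prove the Morse lemma by the classical two-step scheme: first represent $\phi$ near $z^*$ as a quadratic form whose coefficient matrix varies continuously, then diagonalize this form by an inductive "completing the square" that produces the desired local change of coordinates. Throughout I would assume, after translating, that $z^* = 0$ and, replacing $\phi$ by $\phi-\phi(z^*)$ exactly as in the proof of Theorem~\ref{thm:conv}, that $\phi(0)=0$; since $z^*$ is a stationary point, $\Grad\phi(0)=0$.

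First I would invoke Taylor's formula with integral remainder. Because $\phi\in C^2$ near $0$ with $\phi(0)=0$ and $\Grad\phi(0)=0$, one has
\begin{equation*}
\phi(z) = \sum_{i,j=1}^d z_i z_j\, H_{ij}(z), \qquad H_{ij}(z) := \int_0^1 (1-s)\,\partial_i\partial_j\phi(sz)\,ds.
\end{equation*}
The matrix field $H(z)=(H_{ij}(z))$ is symmetric and continuous (since $\partial_i\partial_j\phi$ is continuous), and $H(0)=\tfrac12\Hess\phi(0)$, which is nonsingular by nondegeneracy of $z^*$.

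Next I would diagonalize $H$ by induction on the coordinate being eliminated. At the stage where $\phi = \sum_{i<r}\epsilon_i w_i^2 + \sum_{i,j\ge r} w_i w_j \tilde H_{ij}(w)$ with $\tilde H(0)$ nonsingular, a linear orthogonal change among $w_r,\dots,w_d$ makes $\tilde H_{rr}(0)\neq 0$, hence $\tilde H_{rr}(w)$ keeps a constant sign $\epsilon_r$ near $0$; setting
\begin{equation*}
v_r = \sqrt{|\tilde H_{rr}(w)|}\,\Big(w_r + \sum_{j>r}\frac{\tilde H_{rj}(w)}{\tilde H_{rr}(w)}\,w_j\Big), \qquad v_i = w_i \ (i\neq r),
\end{equation*}
completes the square and removes all cross terms involving $w_r$, leaving $\phi = \sum_{i\le r}\epsilon_i v_i^2 + (\text{continuous nonsingular form in } v_{r+1},\dots,v_d)$. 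After $d$ steps I obtain continuous coordinates $w=\Psi(z)$ with $\phi=\sum_i\epsilon_i w_i^2$, $\epsilon_i=\pm1$. Crucially, although $\Psi$ is built from the merely continuous coefficients $\sqrt{|\tilde H_{rr}|}$, each substitution is differentiable at the origin (its prefactor tends to $\sqrt{|\tilde H_{rr}(0)|}$ and multiplies an $O(|w|)$ term), so $\Psi$ is a local homeomorphism that is differentiable at $0$ with invertible derivative. A final linear map — orthogonally aligning the axes with the eigenvectors of $\Hess\phi(0)$ and rescaling each coordinate — turns $\sum_i\epsilon_i w_i^2$ into $\sum_i\lambda_i y_i^2$ (the signature matching $\Hess\phi(0)$ by Sylvester's law) and can be normalized so that $\det T'(z^*)=1$; here the choice of scaling fixes the standard factor-of-two normalization between $\sum_i\lambda_i y_i^2$ and $\tfrac12 z^{\top}\Hess\phi(z^*)z$.

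The main obstacle is the low regularity. The familiar proof (Milnor) assumes $\phi\in C^\infty$ and yields a $C^\infty$ diffeomorphism, since one may take smooth square roots and apply the inverse function theorem freely. Under only $C^2$, the coefficients $\sqrt{|\tilde H_{rr}(w)|}$ are continuous but not differentiable, so each inductive substitution must be shown invertible with a continuous inverse near $0$ by hand — for instance, by solving the implicit relation for $w_r$ through a contraction argument exploiting that the nonlinear perturbation is small — and the resulting $T$ is a homeomorphism rather than a diffeomorphism. Verifying this invertibility, together with the differentiability of $T$ at $z^*$ that makes $\det T'(z^*)$ meaningful, is the delicate part; the remaining reductions and the final linear normalization are routine bookkeeping.
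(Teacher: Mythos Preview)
The paper does not prove this lemma; it is simply quoted from \cite{gamkrelidze2012analysis}, followed by a one-sentence remark that in the $C^2$ setting the change of variables can be constructed ``by introducing hyperspherical coordinates'' (citing \cite[Section~3.7]{miller2006applied}). So there is no detailed argument in the paper to match against---only the indicated construction.

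Your route---the classical Milnor proof via the integral-remainder Taylor formula and inductive completing of the square---is sound as a sketch and is genuinely different from the construction the paper points to. You correctly isolate the only real difficulty: with $\phi\in C^2$ the coefficients $H_{ij}$ are merely continuous, so the successive substitutions are not $C^1$ and local invertibility cannot come from the inverse function theorem but must be checked directly. The hyperspherical construction the paper references sidesteps exactly this obstacle by exploiting that $z^*$ is a nondegenerate \emph{minimum}: writing $z=z^*+r\omega$ one has $\tilde\phi(z^*+r\omega)=r^2\Phi(r,\omega)$ with $\Phi(0,\omega)=\tfrac12\,\omega^{\top}\Hess\phi(z^*)\,\omega>0$, and a single rescaling of the radius $\rho=r\sqrt{2\Phi(r,\omega)/(\omega^{\top}\Hess\phi(z^*)\,\omega)}$ followed by a rotation into eigencoordinates gives $\tilde\phi=\tfrac12\sum_i\lambda_i y_i^2$. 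Because only the radial coordinate is rescaled (by a positive continuous factor that equals $1$ at $r=0$), injectivity is immediate and the derivative at $z^*$ is orthogonal, so no contraction or monotonicity argument is needed. Your approach buys generality (it works for any signature); the paper's referenced approach buys a cleaner handling of the low-regularity invertibility issue, at the cost of being specific to the definite case. One small point: the two normalizations in the stated lemma, $\phi(z)=\sum_i\lambda_i y_i^2$ and $\det T'(z^*)=1$, are mutually incompatible (matching Taylor expansions forces $|\det T'(z^*)|=2^{-d/2}$); the paper actually applies the lemma with an extra $\tfrac12$ in front, which restores consistency. Your ``factor-of-two normalization'' remark notices this tension but does not quite resolve it.
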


In particular, if $\phi$ is $C^2$ around the stationary point, then
a twice continuously differentiable homeomorphism $T$ can be obtained by introducing hyperspherical coordinates (see e.g., \cite[Section 3.7]{miller2006applied}).

\begin{lemma}[{\cite[Chapter 2]{miller2006applied}}]\label{lem:integral}
Let $\alpha$ and $\beta$ be positive numbers. For any function $\psi\in C^2(\Re),$ as $\lambda\to+\infty$,
\[
\int_{-\alpha}^\beta \psi(y)\exp{(-\lambda y^2)}dy = \sqrt{\frac{\pi}{\lambda}}\left(\psi(0)+\frac{\psi''(0)}{2}\lambda^{-1}+\mathcal O(\lambda^{-2})\right).
\]
\end{lemma}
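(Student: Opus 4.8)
The plan is to prove the expansion by \emph{Laplace's method}: since the weight $e^{-\lambda y^{2}}$ concentrates at the interior maximizer $y=0$ of $-y^{2}$ as $\lambda\to+\infty$, only the behaviour of $\psi$ in a shrinking neighbourhood of $0$ contributes at each order in $\lambda^{-1}$. The first step is the rescaling $u=\sqrt\lambda\,y$, under which
\[
\int_{-\alpha}^{\beta}\psi(y)e^{-\lambda y^{2}}\,dy
=\frac{1}{\sqrt\lambda}\int_{-\sqrt\lambda\,\alpha}^{\sqrt\lambda\,\beta}\psi\!\left(\tfrac{u}{\sqrt\lambda}\right)e^{-u^{2}}\,du .
\]
The prefactor $\lambda^{-1/2}$ already supplies the overall scale $\sqrt{\pi/\lambda}$ once the $u$-integral is compared with $\int_{\mathbb R}e^{-u^{2}}\,du=\sqrt\pi$, and the integration limits tend to $\mp\infty$ and $\pm\infty$ respectively.

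Next I would Taylor-expand $\psi$ about the origin,
\[
\psi(y)=\psi(0)+\psi'(0)\,y+\tfrac12\psi''(0)\,y^{2}+R(y),\qquad R(y)=o(y^{2})\ \text{as }y\to0 ,
\]
and integrate term by term against $e^{-\lambda y^{2}}$. Two reductions keep the computation transparent. First, replacing the finite interval $[-\alpha,\beta]$ by all of $\mathbb R$ costs only an error of size $\bigO(e^{-c\lambda})$ for some $c>0$, because $\int_{|y|>\rho}y^{k}e^{-\lambda y^{2}}\,dy$ is exponentially small for every fixed $\rho>0$ and each $k$; such tail terms lie beyond every power of $\lambda^{-1}$ and are absorbed into the remainder. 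Second, the odd linear term integrates to zero over $\mathbb R$ (and to an exponentially small quantity over $[-\alpha,\beta]$). The surviving even contributions are Gaussian moments: $\int_{\mathbb R}e^{-\lambda y^{2}}\,dy=\sqrt{\pi/\lambda}$ produces the leading $\psi(0)$ term, while the second moment, equivalently $\int_{\mathbb R}u^{2}e^{-u^{2}}\,du=\tfrac{\sqrt\pi}{2}$ after rescaling, produces the $\psi''(0)$ correction at order $\lambda^{-1}$ relative to the leading term, with its coefficient read off from the ratio of the second to the zeroth Gaussian moment. Collecting these pieces gives the displayed $\sqrt{\pi/\lambda}$-scaled form.

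The main obstacle is controlling the remainder integral $\int_{-\alpha}^{\beta}R(y)e^{-\lambda y^{2}}\,dy$ to the stated order. After rescaling this becomes $\lambda^{-1/2}\int R(u/\sqrt\lambda)e^{-u^{2}}\,du$, and the natural tool is a dominated-convergence / uniform-integrability argument: split $u$ into a fixed ball and its complement, bound the near-origin part using $R(y)=o(y^{2})$ by $\varepsilon\int u^{2}e^{-u^{2}}\,du$ for arbitrary $\varepsilon>0$, and control the outer part by the exponential tail together with the boundedness of $\psi$ on the compact interval $[-\alpha,\beta]$. Under the stated $C^{2}$ hypothesis this certifies that the correction beyond the leading term is of the subleading order, consistent with the displayed expansion; obtaining the sharp $\bigO(\lambda^{-2})$ error term requires carrying the Taylor expansion two orders further, so that the odd cubic term drops by symmetry and the quartic term supplies the $\bigO(\lambda^{-2})$ bound, exactly as in the cited reference \cite{miller2006applied}. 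Matching the regularity of $\psi$ to the order of the remainder one wishes to certify is the only delicate point; the remainder of the argument is the standard rescaling-and-moments computation.
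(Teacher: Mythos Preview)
The paper does not supply a proof of this lemma; it is quoted with a citation to \cite[Chapter~2]{miller2006applied} and invoked as a black box in the proof of Theorem~\ref{thm_C2}. Your Laplace-method sketch---rescale by $\sqrt{\lambda}$, Taylor-expand $\psi$ at the origin, discard odd terms by parity, read off the Gaussian moments $\int_{\mathbb R}e^{-u^{2}}\,du=\sqrt{\pi}$ and $\int_{\mathbb R}u^{2}e^{-u^{2}}\,du=\sqrt{\pi}/2$, and bound the tails outside $[-\alpha,\beta]$ by an exponentially small term---is the standard argument and is correct.

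Your closing caveat is on point and worth stating plainly: with only $\psi\in C^{2}$, the Peano remainder $R(y)=o(y^{2})$ yields a correction of size $o(\lambda^{-1})$ relative to $\sqrt{\pi/\lambda}$, not the $\bigO(\lambda^{-2})$ written in the lemma. The sharper remainder genuinely requires $\psi\in C^{4}$ (so that the odd cubic term cancels by symmetry and the quartic term supplies the $\bigO(\lambda^{-2})$ bound), as you note. The paper's only use of the lemma, in the proof of Theorem~\ref{thm_C2}, needs merely that the deviation from the leading term is $\bigO(\lambda^{-1})$, so this mismatch between the stated regularity and the stated remainder order does not affect the paper's conclusions.
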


Next, we are ready to prove Theorem~\ref{thm_C2}.

\noindent\textbf{Proof of Theorem~\ref{thm_C2}.}
For $\delta>0,$ write
\[
z_\delta := \frac{\int z\exp\left(-\phi(z)/\delta\right)dz}{\int\exp{(-\phi(z)/\delta)dz}}.
\]
It follows that
\[
z_\delta-z^* = \frac{\int(z-z^*)\exp\left(-{\phi(z)}/{\delta}\right)dz}{\int\exp{(-\phi(z)/\delta)dz}}\,.
\]
Define $\tilde \phi(z):=\phi(z)-\phi(z^*).$ Then $\tilde \phi\ge 0$ and 
\[
z_\delta-z^*
= \frac{\int (z-z^*)\exp{(-\tilde \phi(z)/\delta)}dz}{\int \exp{(-\tilde \phi(z)/\delta)dz}}.
\]

As $\tilde \phi$ is also $p$-coercive, there exists some constant $\eta>0$ such that $\norm{z-z^*}\ge\eta$
implies that $\tilde \phi(z)\ge\norm{z-z^*}^p.$ Hence, as $\delta\to 0^+,$
\[
\norm{\int_{\Set{z:\norm{z-z^*}\ge\eta}} (z-z_\delta)\exp{\left(-\frac{\tilde \phi(z)}{\delta}\right)}dz} = o\left(\exp\left(-\frac{\eta^p}{2\delta}\right)\right)\,,
\]
and
\[
\int_{\Set{z:\norm{z-z^*}\ge\eta}} \exp{\left(-\frac{\tilde \phi(z)}{\delta}\right)}dz = o\left(\exp\left(-\frac{\eta^p}{2\delta}\right)\right)\,.
\]
By Lemma~\ref{lem:morse}, there exists an open neighborhood $V$ of $z^*$ and a homeomorphism $T$ such that $T(z^*)=0,$ $\det(T'(z^*))=1$, and
$\tilde \phi(z) = \frac{1}{2}\sum_{i=1}^d\lambda_iy_i^2\,,$
where $y=T(z)$ and $\lambda_1,\cdots,\lambda_n$ are eigenvalues of $\Hess \phi(z^*)$. Consider $\tau>0$ sufficiently small such that
\[
C_\tau :=\Set{y:\norm{y}_\infty\le\tau}\subset T(U\cap V)\,.
\]
It follows that
\begin{align}\label{eqn:integral}
&\int_{T^{-1}(C_\tau)} z\exp{\left(-\frac{\tilde \phi(z)}{\delta}\right)}dz \nonumber
=\int_{C_\tau}T^{-1}(y)\exp\left(-\frac{\sum_{i=1}^d \lambda_i y_i^2}{2\delta}\right)\det((T^{-1})'(y))dy \nonumber\\
=&\int_{C_\tau}(z^*+\mathcal O(y))exp\left(-\frac{\sum_{i=1}^d \lambda_i y_i^2}{2\delta}\right)dy \nonumber\\
=&z^*\prod_{i=1}^d\int_{-\tau}^{\tau}\exp\left(-\frac{\lambda_i y_i^2}{2\delta}\right)dy_i+\int_{C_\tau}\mathcal O(y)\exp\left(-\frac{\sum_{i=1}^d \lambda_i y_i^2}{2\delta}\right)dy \nonumber\\
=&\frac{(2\pi\delta)^{d/2}}{\sqrt{\det(\Hess\tilde f(z^*))}}z^*+\int_{C_\tau}\mathcal O(y)\exp\left(-\frac{\sum_{i=1}^d \lambda_i y_i^2}{2\delta}\right)dy\,.
\end{align}
Hence, by Lemma~\ref{lem:integral}, as $\delta\to 0^+,$
\begin{align*}
\norm{\int_{T^{-1}(C_\tau)} z\exp{\left(-\frac{\tilde \phi(z)}{\delta}\right)}dz-\frac{(2\pi\delta)^{d/2}}{\sqrt{\det(\Hess\tilde \phi(z^*))}}z^*}=\bigO(\delta^{d/2+1})\,.
\end{align*}
Similarly, as $\delta\to 0^+,$
\[
\int_{T^{-1}(C_\tau)}\exp{\left(-\frac{\tilde \phi(z)}{\delta}\right)}dz = \frac{(2\pi\delta)^{d/2}}{\sqrt{\det(\Hess\tilde \phi(z^*))}}+\bigO
(\delta^{d/2+1})\,.
\]
For $z\in\Set{z:\norm{z-z^*}\le\eta}\setminus T^{-1}(C_\tau)$, since $z^*$ is the unique minimizer and $T^{-1}(C_\tau)$ is an open neighborhood of $z^*,$
there must exist $\epsilon>0$ such that
$\tilde \phi(z)\ge\epsilon$. Hence, as $\delta\to 0^+,$
\[
\norm{\int_{\Set{z:\norm{z-z^*}\le\eta}\setminus T^{-1}(C_\tau)}(z-z^*)\exp{\left(-\frac{\tilde \phi(z)}{\delta}\right)}dz}=o(e^{-\epsilon/2\delta})\,,
\]
and
\[
\int_{\Set{z:\norm{z-z^*}\le\eta}\setminus T^{-1}(C_\tau)}\exp{\left(-\frac{\tilde \phi(z)}{\delta}\right)}dz=o(e^{-\epsilon/2\delta})\,.
\]

Therefore,\[
\lim_{\delta\to 0^+} z_\delta = \lim_{\delta\to 0^+}\frac{\frac{(2\pi\delta)^{d/2}}{\sqrt{\det(\Hess\tilde \phi(z^*))}}z^*+\bigO(\delta^{d/2+1})}{\frac{(2\pi\delta)^{d/2}}{\sqrt{\det(\Hess\tilde \phi(z^*))}}+\bigO(\delta^{d/2+1})} = z^*\,.
\]
Moreover,
\[
\lim_{\delta\to 0^+} \norm{z_\delta-z^*}= \frac{\bigO(\delta^{d/2+1})}
{\frac{(2\pi\delta)^{d/2}}{\sqrt{\det(\Hess\tilde \phi(z^*))}}+\bigO(\delta^{d/2+1})} = \bigO(\delta)\,.
\]

\subsection{Proof of Corollary~\ref{cor:convexHull}}\label{app:proof36}
Let \[
\bar z^*= \left(\sum\limits_{j-1}^m \frac{z_j^*}{\sqrt{\det(\Hess \phi(z_j^*))}} \right)/
\left(\sum\limits_{j-1}^m \frac{1}{\sqrt{\det(\Hess\phi(z_j^*))}} \right).
\]
Following similar arguments as in the proof of Theorem~\ref{thm_C2}, 
\[
\lim_{\delta\to 0^+} \norm{z_\delta-\bar z^*}= \frac{\bigO(\delta^{d/2+1})}
{\sum\limits_{j-1}^m \frac{(2\pi\delta)^{d/2}}{\sqrt{\det(\Hess\phi(z_j^*))}}+\bigO(\delta^{d/2+1})} = \bigO(\delta).
\]

\section{Other applications: solving Hamilton-Jacobi equation}\label{append:applications}
In this section, we explore the use of TT-IPP for solving the Hamilton-Jacobi (HJ) equation.
We aim to solve the following HJ equation:
\[
\begin{cases}
    \frac{\partial u}{\partial t} + H(\nabla u) = 0, & t > 0\,, \\
    u(x, 0) = f(x), & t = 0\,.
\end{cases}
\]
According to the Hopf-Lax formula, when \( H \) is convex, the solution is given by
\[
u(x, t) = \min_y \left\{ f(y) + t H^*\left( \frac{x - y}{t} \right) \right\}, \quad t > 0\,.
\]
We apply our proposed TT-IPP algorithm to solve this optimization problem, obtaining an approximation \(\tilde{y}\) to the global minimum \(y\) and constructing an approximate solution \(\tilde{u}\) as
\begin{equation}
\label{appx_inf_conv_HJ}
   \tilde{u}(x, t) = f\left(\tilde{y}(x, t)\right) + t H^*\left(\frac{\tilde{y}(x, t) - x}{t}\right)\,.
\end{equation}
To measure the accuracy of the solution, we introduce the residual function
\[
r(x, t) := \left| \frac{\partial \tilde{u}(x, t)}{\partial t} + H(\nabla  \tilde{u}(x, t)) \right|\,.
\]

We investigate the accuracy of the approximation \eqref{appx_inf_conv_HJ} for different convex Hamiltonians given by
\[
H(x) = \frac{\|x\|_p^p}{p}, \quad H^*(x) = \frac{\|x\|_q^q}{q}, \quad \frac{1}{p} + \frac{1}{q} = 1\,.
\]
The \( L^2 \)-norm of the residual function is computed for various values of \( \delta \) and dimensions. We fix \( t = 1 \) and evaluate the residual function at 100 randomly sampled points in \( x \in [-2, 2]^d \), with two different nonconvex initial conditions $f_1(x) = \|x\|_{1/2}^{1/2}$ and $f_2(x) = \sum_{i=1}^d(\sin(\pi x_i)+1)$. The results are summarized in Table~\ref{table_HJ}, demonstrating that our mesh-free approximation can provide a reasonably accurate approximation to the solution of the original HJ equation. 
Figure~\ref{fig:hj_solution} presents contour plots of the 2D slice of the approximate solution to the HJ equation, with initial data \( \|x\|_{1/2}^{1/2} \) and Hamiltonian \( H(u) = \|u\|_2^2 / 2 \), evaluated at \( t = 0 \), \( t = 0.2 \), and \( t = 2 \) in a 10-dimensional space. 

\begin{table}[h!]
\centering
\begin{tabular}{c|c|c|c|c|c|c}
\hline
\( \mathbf{d} \) & \(\makecell{p=2\\f= f_1}\) & \(\makecell{p=4\\f= f_1}\) & \(\makecell{p=6\\f= f_1}\) & \(\makecell{p=2\\f= f_2}\) & \(\makecell{p=4\\f= f_2}\) & \(\makecell{p=6\\f= f_2}\) \\
\hline
32  & \(1.18\times 10^{-3} \) & \( 9.22 \times 10^{-4} \) & \(1.07\times 10^{-3} \) & \(  2.16\times 10^{-3} \) & \(1.57  \times 10^{-3} \) & \( 1.24 \times 10^{-3} \) \\
64  & \(2.48 \times 10^{-3} \) & \( 1.31 \times 10^{-3} \) & \( 2.97 \times 10^{-3} \) & \( 4.03 \times 10^{-3} \) & \(2.15 \times 10^{-3} \) & \( 1.75 \times 10^{-3} \) \\
128 & \(3.51 \times 10^{-3} \) & \( 2.75 \times 10^{-3} \) & \( 3.43 \times 10^{-3} \) & \( 6.67\times 10^{-3} \) & \( 3.54 \times 10^{-3} \) & \(2.81 \times 10^{-3} \) \\
\hline
\end{tabular}
 \label{table_HJ}
 \caption{$L^2$-norm of residual function for different \( d \), \(p \), and initial conditions $f_1(x) = \|x\|_{1/2}^{1/2}$, $f_2(x) = \sum_{i=1}^d(\sin(\pi x_i)+1)$.}
\end{table}
\begin{figure}
    \centering
    \includegraphics[width=0.32\linewidth]{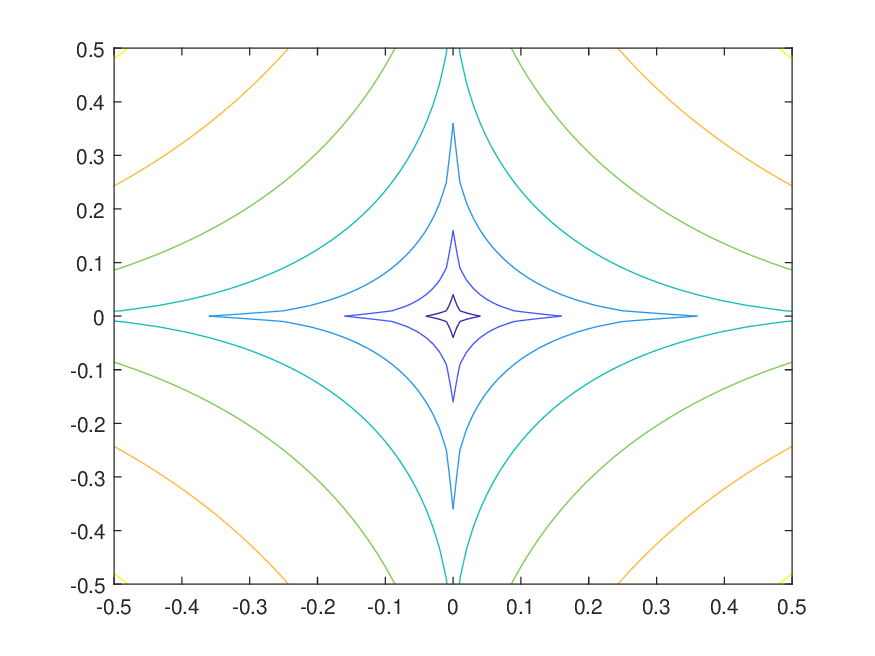}
    \includegraphics[width=0.32\linewidth]{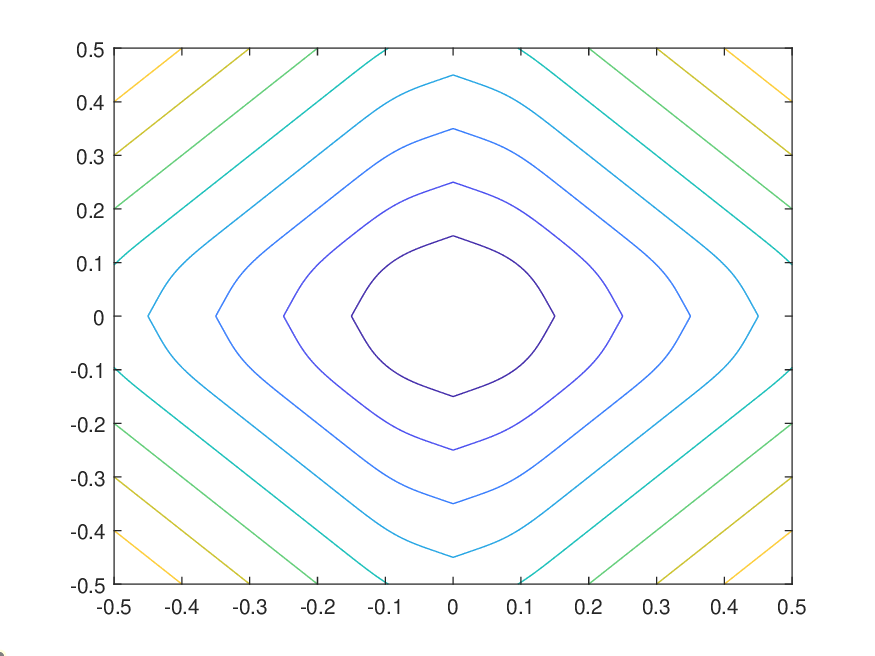}
    \includegraphics[width=0.32\linewidth]{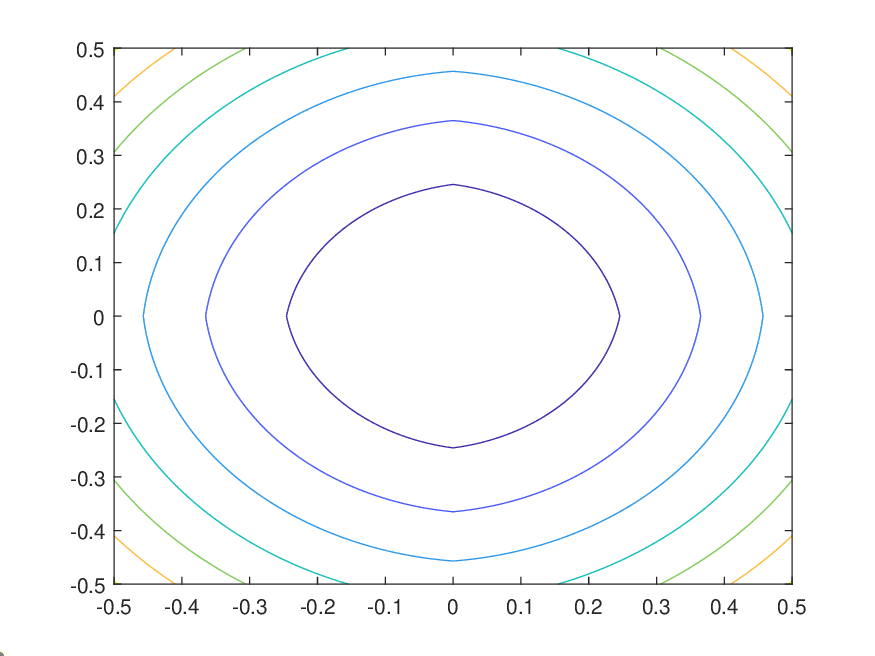}
    \caption{Contour plots of the Hamilton-Jacobi equation solution with initial condition \( \|x\|_{1/2} \) and Hamiltonian \( H(u) = \|u\|_2^2 \) at \( t = 0 \), \( t = 0.2 \), and \( t = 2 \) in a 10-dimensional space.}
    \label{fig:hj_solution}
\end{figure}
\end{document}